\newtheorem{thm}{Theorem}
\newtheorem{theorem}[thm]{Theorem}
\newtheorem{lem}[thm]{Lemma}%
\newtheorem{lemma}[thm]{Lemma}
\newtheorem{cor}[thm]{Corollary}%
\theoremstyle{remark}
\newtheorem{remark}{Remark}[section] %
\theoremstyle{plain}
\numberwithin{equation}{section}
\def\GG{{\mathbb G}}
\def\HH{{\mathbb H}}
\def\MM{{\mathbb M}}
\def\NN{{\mathbb N}}
\def\RR{{\mathbb R}}
\def\SS{{\mathbb S}}
\def\TT{{\mathbb T}}
\def\ZZ{{\mathbb Z}}
\def\veca{{\text{\boldmath$a$}}}
\def\vecb{{\text{\boldmath$b$}}}
\def\vecc{{\text{\boldmath$c$}}}
\def\vecd{{\text{\boldmath$d$}}}
\def\vecm{{\text{\boldmath$m$}}}
\def\vecv{{\text{\boldmath$v$}}}
\def\vecx{{\text{\boldmath$x$}}}
\def\bs{\backslash}
\def\scrA{{\mathcal A}}
\def\scrB{{\mathcal B}}
\def\scrC{{\mathcal C}}
\def\scrD{{\mathcal D}}
\def\scrE{{\mathcal E}}
\def\scrL{{\mathcal L}}
\def\scrN{{\mathcal N}}
\def\scrP{{\mathcal P}}
\def\scrU{{\mathcal U}}
\def\scrV{{\mathcal V}}
\def\scrZ{{\mathcal Z}}
\def\Re{\operatorname{Re}}
\def\Im{\operatorname{Im}}
\def\diam{\operatorname{diam}}
\def\H{{\mathfrak H}}
\def\e{\mathrm{e}}
\def\i{\mathrm{i}}
\def\j{\operatorname{j{}}}
\def\GL{\operatorname{GL}}
\def\SL{\operatorname{SL}}
\def\PSL{\operatorname{PSL}}
\def\vol{\operatorname{vol}}
\def\bs{\backslash}
\def\wbar{{\overline w}}
\newcommand{\ind}[1]{\ensuremath{{\mathbbm{1}}{\big(#1\big)}}}
\newcommand{\beq}{\begin{equation}}
\newcommand{\eeq}{\end{equation}}
\newcommand{\myboldfont}{\mathbb}
\newcommand{\eps}{\varepsilon}
\renewcommand{\epsilon}{\varepsilon}
\renewcommand{\phi}{\varphi}
\renewcommand{\kappa}{\varkappa}
\renewcommand{\ge}{\geqslant}
\renewcommand{\le}{\leqslant}
\renewcommand{\leq}{\leqslant}
\renewcommand{\geq}{\geqslant}
\let\th\@undefined               
\DeclareMathOperator{\sh}{sh}
\DeclareMathOperator{\th}{th}
\DeclareMathOperator{\cth}{cth}
\DeclareMathOperator{\ch}{ch}
\DeclareMathOperator{\tg}{tg}
\DeclareMathOperator{\arctg}{arctg}
\newcommand{\SU}{\ensuremath{\mathrm{SU}}}
\newcommand{\PSU}{\ensuremath{\mathrm{PSU}}}
\newcommand{\quot}{\ensuremath{\backslash}}
\newcommand{\R}{\ensuremath{\myboldfont R}}
\newcommand{\Z}{\ensuremath{\myboldfont Z}}
\renewcommand{\H}{\ensuremath{\myboldfont H}}
\renewcommand{\d}{\ensuremath{\partial}}
\def\slim{s}
\title{Directions in hyperbolic lattices}
\author{Jens Marklof}
\author{Ilya Vinogradov}
\address{School of Mathematics, University of Bristol,
Bristol BS8 1TW, U.K.\newline
\hspace*{8pt}{\tt j.marklof@bristol.ac.uk}}
\address{School of Mathematics, University of Bristol,
Bristol BS8 1TW, U.K.\newline
\hspace*{8pt}{\tt ilya.vinogradov@bristol.ac.uk}}
\date{9 October 2014/31 July 2015.}
\thanks{The research leading to these results has received funding from the European Research Council under the European Union's Seventh Framework Programme (FP/2007-2013) / ERC Grant Agreement n. 291147.}
\begin{document}

\begin{abstract}
It is well known that the orbit of a lattice in hyperbolic $n$-space is uniformly distributed when projected radially onto the unit sphere. In the present work, we consider the fine-scale statistics of the projected lattice points, and express the limit distributions in terms of random hyperbolic lattices. This provides in particular a new perspective on recent results by Boca, Popa, and Zaharescu on 2-point correlations for the modular group, and by Kelmer and Kontorovich for general lattices in dimension $n=2$.
\end{abstract}

\maketitle

\section{Introduction}

Let $\HH^n$ denote hyperbolic $n$-space, and $G$ its group of orientation-preserving isometries. A discrete subgroup $\Gamma<G$ is called a \emph{lattice} if it has a finite volume fundamental domain. We denote by $\Gamma_w$ the stabilizer of $w\in\HH^n$ in $\Gamma$. Since $\Gamma$ acts properly discontinuously on $\HH^n$, $\Gamma_w$ is a finite group. Given a point $z\in\HH^n$ we define the \emph{direction} $\phi_z(w)$ of a point $w\in\HH^n\setminus \{z\}$ as the intersection of the semi-infinite geodesic ray starting at $z$ and passing through $w$ with the unit sphere $\SS_z^{n-1}=\{ w\in\HH^n:d(w,z)=1\}$ centered at $z$, where $d(w,z)$ is the hyperbolic distance between $z$ and $w$. 
 Due to the homogeneity of $\HH^n$, we may alternatively think of $\phi_z(w)$ as the unit tangent vector at $z$ which is tangent to the above geodesic ray, or as the ray's endpoint on the boundary of $\HH^n$. 

The goal of the present paper is to explain the statistical distribution of directions in the orbit $\wbar := \Gamma w$
within distance $t$ to a fixed observer at the point $z$,
\begin{equation}\label{direction1}
\scrP_{t}^z(\wbar):=\{ \phi_z(\gamma w) :  \gamma\in\Gamma/\Gamma_w , \; 0<d(\gamma w,z)\leq t  \} ,
\end{equation}
in the limit $t\to\infty$. Here $\scrP_{t}^z(\wbar)$ is defined as a \emph{multiset}; i.e., the directions are recorded with multiplicity so that 
\begin{equation}
\#\scrP_{t}^z(\wbar) = \#\{ \gamma\in\Gamma/\Gamma_w : 0<d(\gamma w,z)\leq t \} .
\end{equation}
Given any choice of origin $o\in\HH^n$ and any isometry $g\in G$ so that $gz=o$, we have 
\begin{equation}
\scrP_{t}^z(\wbar) = \scrP_{t}^o(g\wbar) .
\end{equation}
This allows us to consider instead the distribution of directions of the point set $g\wbar$ relative to the fixed origin $o$. We will in the following omit indicating the dependence on $o$ and write $\SS^{n-1}:=\SS_o^{n-1}$, $d(w):=d(w,o)$, etc.

It is natural to also consider the directions of lattice points in a spherical shell with outer radius $t$ and width $s\in(0,t)$, 
\begin{equation}\label{directions2}
\scrP_{t,s}(g\wbar) := \{ \phi(g\gamma w) :  \gamma\in\Gamma/\Gamma_w ,\; t-s < d(g\gamma w)\leq t \} ,
\end{equation}
again defined as a multiset.
To unify the notation for balls and shells we set $\scrP_{t,s} (g\wbar) :=\scrP_t (g\wbar)$ for $t\leq s\leq\infty$.

The volume of the unit sphere $\SS^{n-1}$ in $\HH^n$ is $\vol_{\SS^{n-1}}(\SS^{n-1}) =\Omega_n \sh^{n-1}1$,
where $\Omega_n:=\frac{2\pi^{n/2}}{\Gamma(\frac n2)}$ is the full solid angle (that is, the volume of the Euclidean unit sphere in $\RR^n$). As it is natural to measure directions in solid angles, we will in the following use the measure $\omega$ on $\SS^{n-1}$ defined by
$\omega(\scrA)=\vol_{\SS^{n-1}}(\scrA)/\sh^{n-1} 1$,
so that $\omega(\SS^{n-1})=\Omega_n$.

It is well known \cite{nicholls_lattice_1983, boca_distribution_2007} that the directions $\scrP_{t,s}(g\wbar)$ are uniformly distributed on $\SS^{n-1}$; i.e., for every $\scrA\subset\SS^{n-1}$ with boundary of measure zero and $s\in(0,\infty]$, we have
\begin{equation}\label{eq:UD}
\lim_{t\to\infty} \frac{\#(\scrP_{t,s} (g\wbar) \cap \scrA)}{\#\scrP_{t,s} (g\wbar)} = \frac{\omega(\scrA)}{\Omega_n} .
\end{equation}

For large $t$, the total number of points in the spherical shell has asymptotics 
\begin{equation}\label{ball-count}
\begin{split}
\#\scrP_{t,s} (g\wbar) & \sim \frac{\{ z\in\HH^n : t-\slim < d(z)\leq t\}}{\#\Gamma_w \vol_{\HH^n}(\Gamma\bs\HH^n)} \\
& \sim \Omega_n\, \vartheta\, \e^{(n-1)t} ,
\end{split}
\end{equation}
where
\begin{equation}\label{vartheta}
\vartheta:=\frac{1-\e^{-(n-1)\slim}}{(n-1) \#\Gamma_w \vol_{\HH^n}(\Gamma\bs\HH^n)} 
\end{equation}
and $\vol_{\HH^n}(\Gamma\bs\HH^n)$ denotes the volume of a fundamental domain of the $\Gamma$-action on $\HH^n$.

The challenge is to understand the fine-scale distribution of the point set $\scrP_{t,s}(g\wbar)$ for large $t$. One example of such a statistic is the 2-point correlation function, for which a limit formula was conjectured by Boca et al.\ \cite{boca_pair_2013}  in dimension $n=2$ and proved in the special case $\Gamma=\SL(2,\ZZ)$ and $z=\i$ or $z=\e^{\i\pi/3}$. The general proof of this conjecture was recently given by Kelmer and Kontorovich \cite{kelmer_pair_2013}. In the present paper we extend their limit theorems to general local statistics and to arbitrary dimension $n\geq 2$ by adapting the strategy developed in the Euclidean setting \cite{marklof_strombergsson_free_path_length_2010}. The key step is the reduction of convergence in distribution to equidistribution of large spheres in relevant moduli spaces. 

The recent work of Risager and S\"odergren \cite{risager_angles_2014} extends the effective convergence of the 2-point correlation function in \cite{kelmer_pair_2013} to arbitrary dimension $n\ge 2$; it also includes an explicit formula for the limit in dimension $n=3$. The analysis in  \cite{risager_angles_2014} is restricted to the 2-point correlations of distances between the projected points on $\SS^{n-1}$. The approach presented here yields 2-point (as well as higher order) correlations of both distances and relative orientation, as we permit test sets that are not rotationally invariant.

Given $\sigma>0$, denote by $\scrD_{t,s} (\sigma,\vecv)\subset\SS^{n-1}$ the open disc of volume 
\begin{equation}\label{eq:scaling}
\omega(\scrD_{t,s} (\sigma,\vecv)) = \Omega_n \, \frac{\sigma}{\#\scrP_{t,s} (g\wbar)}
\end{equation}
centered at a point $\vecv\in\SS^{n-1}$. When the denominator in the above equation vanishes, we set $\scrD_{t,s} (\sigma,\vecv)= \SS^{n-1}$. We are interested in the number of lattice directions in $\scrD_{t,s} (\sigma,\vecv)$,
\begin{equation}
\scrN_{t,s} (\sigma,\vecv;g\wbar) := \#(\scrP_{t,s} (g\wbar) \cap \scrD_{t,s} (\sigma,\vecv)),
\end{equation}
when $\vecv$ is distributed according to a fixed Borel probability measure $\lambda$ on $\SS^{n-1}$. 
We have chosen the volume of the disc in \eqref{eq:scaling} so that
\begin{equation}
\frac{1}{\Omega_n} \int_{\SS^{n-1}} \scrN_{t,s} (\sigma,\vecv;g\wbar) \, d\omega(\vecv) = \sigma.
\end{equation}
The asymptotic density \eqref{eq:UD} furthermore implies that, for any probability measure $\lambda$ with continuous density,  
\begin{equation}\label{eq:continuous_density}
\lim_{t\to\infty} 
\int_{\SS^{n-1}} \scrN_{t,s} (\sigma,\vecv;g\wbar) \, d\lambda(\vecv) = \sigma.
\end{equation}
(We will later show that this statement extends to $\lambda$ with bounded density, see Theorem \ref{th:moment-sph} below.)

The group $G$ acts on $G/\Gamma$ by left multiplication. In the following we denote by $\mu$ the unique $G$-invariant probability measure on $G/\Gamma$, which can be realized as the pushforward of the suitably normalized Haar measure on $G$ under the natural projection $G\to G/\Gamma$. We will also denote by $\mu$ the normalized Haar measure on $G$.

The following theorem is our principal result. 

\begin{thm}\label{th:distribution}
Let $\lambda$ be a Borel probability measure on $\SS^{n-1}$ absolutely continuous with respect to the Lebesgue measure.
Then, for every $r\in\ZZ_{\geq 0}$, $s\in(0,\infty]$ and $\sigma\in(0,\infty)$, 
\begin{equation}\label{th:distribution-eq1}
E_{\slim}(r,\sigma;\wbar):=\lim_{t\to\infty} \lambda( \{ \vecv\in\SS^{n-1} : \scrN_{t,s} (\sigma,\vecv;g\wbar) = r \}) 
\end{equation}
exists and is given by
\begin{equation}\label{lim:form}
E_{\slim}(r,\sigma;\wbar)=\mu(\{ h\in G/\Gamma  :  \#( h\wbar \cap \scrZ_0(\slim,\sigma) ) = r \}) ,
\end{equation}
where $\scrZ_0(\slim,\sigma) \subset\HH^n$ is a cuspidal cone defined in \eqref{zyl0-def} below.
The limit distribution $E_\slim(\cdot,\sigma;\wbar)$ is independent of $\lambda$ and $g$, continuous in $\slim\in(0,\infty]$ and $\sigma\in(0,\infty)$,
and satisfies
\begin{equation}\label{01law}
\lim_{\sigma\to 0} E_\slim(r,\sigma;\wbar) =
\begin{cases}
1 & (r=0) \\
0 & (r\geq 1).
\end{cases}
\end{equation} 
\end{thm}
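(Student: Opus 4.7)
The plan is to convert the direction count into a count of $\Gamma$-orbit points in a bounded region of $\HH^n$, and then renormalise so that the test region is fixed and all $t$-dependence is absorbed into the base point in $G/\Gamma$. This is the cusp-expansion strategy developed by Marklof and Str\"ombergsson in the Euclidean setting.

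Fix a reference direction $\vecv_0\in\SS^{n-1}$, let $K<G$ be the stabiliser of $o$, and pick a Borel section $k:\SS^{n-1}\to K$ with $k(\vecv)\vecv_0=\vecv$. Let $\{a_\tau\}_{\tau\in\RR}\subset G$ be the one-parameter subgroup that translates $o$ along the geodesic it issues in direction $\vecv_0$. The $\phi$-preimage of $\scrD_{t,s}(\sigma,\vecv)$ intersected with the shell $\{t-\slim<d(\cdot)\le t\}$ is a cone-shaped region $\scrC_{t,s}(\sigma,\vecv)\subset\HH^n$, hence
$$\scrN_{t,s}(\sigma,\vecv;g\wbar) = \#\bigl(g\wbar\cap\scrC_{t,s}(\sigma,\vecv)\bigr) = \#\bigl(k(\vecv)^{-1}g\wbar\cap\scrC_{t,s}(\sigma,\vecv_0)\bigr).$$
By the scaling \eqref{eq:scaling} together with \eqref{ball-count}, the angular aperture of $\scrC_{t,s}(\sigma,\vecv_0)$ is of order $e^{-t}$, and $a_t$ is tailored so that $a_t\scrC_{t,s}(\sigma,\vecv_0)$ converges to the fixed cuspidal cone $\scrZ_0(\slim,\sigma)$. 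Consequently
$$\scrN_{t,s}(\sigma,\vecv;g\wbar) = \#\bigl(a_tk(\vecv)^{-1}g\wbar\cap\scrZ_0(\slim,\sigma)\bigr) + o(1).$$

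The central step is the equidistribution of expanding spheres: for $\lambda$ absolutely continuous with respect to Lebesgue measure and every bounded continuous $F:G/\Gamma\to\RR$,
$$\int_{\SS^{n-1}} F\bigl(a_tk(\vecv)^{-1}g\Gamma\bigr)\,d\lambda(\vecv) \ \longrightarrow\ \int_{G/\Gamma} F\,d\mu \qquad (t\to\infty).$$
This follows from mixing of the geodesic flow on $G/\Gamma$ via the Eskin--McMullen wavefront argument, or equivalently from Ratner/Mozes--Shah equidistribution applied to the expanding horospherical subgroup in the Iwasawa decomposition $G=NAK$. Applied to the bounded measurable indicator $F_r(h)=\ind{\#(h\wbar\cap\scrZ_0(\slim,\sigma))=r}$, it yields \eqref{lim:form}. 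The discontinuity locus of $F_r$ is $\{h:h\wbar\cap\d\scrZ_0(\slim,\sigma)\neq\emptyset\}$, a countable union of codimension-one analytic subsets of $G/\Gamma$ and hence $\mu$-null; sandwiching $F_r$ between continuous thickenings of its level sets transfers the limit to $F_r$, provided a Siegel-type tail bound ensures integrability of the count $\#(h\wbar\cap\scrZ_0(\slim,\sigma))$.

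The remaining assertions are corollaries of \eqref{lim:form}: independence of $\lambda$ and $g$ is immediate, continuity in $\slim$ and $\sigma$ follows from the same $\mu$-null-boundary observation, and the $0$-$1$ law \eqref{01law} holds because $\scrZ_0(\slim,\sigma)$ shrinks to its axis as $\sigma\to 0$, so that $\mu(\{h:h\wbar\cap\scrZ_0(\slim,\sigma)\neq\emptyset\})\to 0$ by dominated convergence against the Siegel bound. The principal obstacle is the non-compactness of $\scrZ_0(\slim,\sigma)$ into the cusp of $\HH^n$: homogeneous dynamics delivers equidistribution against compactly supported test functions for free, but the counting statistic is genuinely unbounded, and controlling orbit points deep in the cusp uniformly in $h$ is the step at which arithmetic information about $\Gamma$, via a Siegel mean-value estimate for $\wbar$-counts in shrinking cuspidal regions, must enter the argument.
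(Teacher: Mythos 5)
Your proposal follows essentially the same route as the paper: rewrite the direction count as a count of orbit points in a thin cone, conjugate by the rotation $k(\vecv)^{-1}$ (the paper's $R(\vecx)$), push by the geodesic flow $\Phi^t$ so the $t$-dependent cone stabilizes to the cuspidal cone $\scrZ_0(\slim,\sigma)$, and then invoke equidistribution of expanding spheres in $G/\Gamma$ (which the paper derives from horospherical equidistribution, itself a consequence of mixing via Eskin--McMullen) together with a Chebyshev/unfolding bound to control the boundary contribution and the cusp tail when $\slim=\infty$. Two small imprecisions to flag: the step $\scrN_{t,s}(\sigma,\vecv;g\wbar)=\#(a_t k(\vecv)^{-1}g\wbar\cap\scrZ_0(\slim,\sigma))+o(1)$ cannot hold literally since both sides are integers---the paper replaces it by the sandwich Lemma~\ref{sat-lem-sph}, bounding the $t$-dependent cone between fixed inner and outer cones $\scrZ(\epsilon,\slim^-,\scrA^-)$ and $\scrZ(-\epsilon,\slim+\epsilon,\scrA^+)$ before applying Corollary~\ref{charThm-spherical}---and the ``Siegel-type'' bound you invoke (Lemma~\ref{lem:small_sets}) is simply the unfolding identity valid for any lattice $\Gamma<G$, so no specifically arithmetic input about $\Gamma$ enters the argument.
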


If $\Gamma$ is co-compact, formula \eqref{lim:form} implies that for any $r\in\ZZ_{\geq 0}$, $s\in(0,\infty]$ there exists a constant $\sigma_0=\sigma_0(\Gamma,r,s)$ such that $E_{\slim}(r,\sigma;\wbar)=0$ for all $\sigma\in[\sigma_0,\infty)$. 

The proof of Theorem \ref{th:distribution} is given at the end of Section \ref{sec:Projection-sph}.
We will in fact extend this result in several ways:

\begin{itemize}[topsep=0.6ex, itemsep=0.7ex, parsep=0.6ex]
\item Instead of the number of points in a single disc, we will also consider the joint distribution in several test sets (not necessarily discs). These statistics capture all other local correlations, such as gap or nearest neighbor distributions.
\item We will prove convergence of mixed moments of all orders. This is in contrast to the Euclidean setting, where higher order moments  diverge \cite{EMV_directions_2013}. The second order mixed moment corresponds to the 2-point correlation function considered in dimension $n=2$ by Boca, Popa, and Zaharescu for the modular group \cite{boca_pair_2013} and by Kelmer and Kontorovich for general lattices \cite{kelmer_pair_2013}. 
\item If $\Gamma$ contains a parabolic subgroup $\Gamma_\infty$, it is natural to consider an observer positioned at the fixed point of $\Gamma_\infty$ on $\partial\HH^n$, the boundary of $\HH^n$. In this case, the directions correspond to the projections of the orbit $\Gamma w$ onto a closed horosphere in $\Gamma_\infty\bs\HH^n$. The uniform distribution of the projected orbit was proved by Good \cite{good_various_1983, risager_distribution_2010} (see also Rudnick and Risager  \cite{risager_rudnick_statistics_2009} for an interesting number-theoretic application). We will show that local statistics have the same limit distribution as in the non-cuspidal case.
\end{itemize}

This paper is organized as follows. Section \ref{sec:Hyperbolic} reviews some of the basic concepts of hyperbolic geometry that are used in our subsequent analysis. Section \ref{sec:Horo} comprises the equidistribution theorem for large horospheres, which is the key ingredient in the present study. The main results of this paper can be found in Sections \ref{sec:Projection-cusp} and \ref{sec:Projection-sph}, where we state and prove the convergence of the local statistics for directions observed by a cuspidal observer (Section \ref{sec:Projection-cusp}) and non-cuspidal observer (Section \ref{sec:Projection-sph}). The latter result requires equidistribution of large spheres, which is derived from the equidistribution for large horospheres in Section \ref{sec:Sphere}. The convergence of moments for cuspidal and non-cuspidal observers is discussed in Sections \ref{sec:Moments} and \ref{sec:Moments-sph}, respectively. We have arranged this paper to first give a full account in the cuspidal case (Sections \ref{sec:Horo}--\ref{sec:Moments}), which is technically simpler, and then explain the necessary adjustments for the non-cuspidal setting (Sections \ref{sec:Sphere}--\ref{sec:Moments-sph}). 

The Appendix shows how the 2-point correlation density is recovered from the second mixed moment, and reproduces the known formulas in dimension $n=2$.

\section{Hyperbolic geometry}\label{sec:Hyperbolic}

In dimension $n=2$, a convenient representation of the hyperbolic plane $\HH^2$ is given by the complex upper half-plane $\{ x+\i y : x\in\RR,\; y\in\RR_{>0}\}$. The advantage of the complex notation is that the action of the isometry group is given by M\"obius transformations
\begin{equation}
\HH^2\to\HH^2, \qquad z \mapsto \frac{az+b}{cz+d}, \qquad \begin{pmatrix} a & b \\ c & d \end{pmatrix} \in \SL(2,\RR).
\end{equation}
This model can be extended to higher dimension \cite{ahlfors_mobius_1985}, if one replaces complex numbers with Clifford numbers. We will here use the notation of Waterman \cite{waterman_mobius_1993} which is slightly different from Alfohrs' \cite{ahlfors_mobius_1985}. 

The \emph{Clifford algebra} $C_m$ is a real associative algebra generated by $\i_1, \i_2, \dots, \i_m$ subject to the conditions $\i_l^2=-1$  and $\i_j\i_l = - \i_l \i_j$ whenever $j\ne l$. Thus, every $a\in C_m$ can be expressed as \beq \label{eq:a_I_sum}a=\sum_I a_I I,\eeq where the sum ranges over all products $I=\i_{\nu_1}\cdots \i_{\nu_l}$ with $1\le \nu_1 < \dots <\nu_l \le m$ and $a_I\in \R$. The null product is also included and represents the real number $1$. The algebra $C_m$ forms a vector space of dimension $2^m$ over \R, and we take the norm $|a|^2 = \sum_I a_I^2$ on it. There are three useful involutions acting on $C_m$. The map $a\mapsto a'$ replaces every occurrence of $\i_l$ by $-\i_l$; it is an algebra automorphism. The map $a\mapsto a^*$ replaces each $I=\i_{\nu_1}\cdots \i_{\nu_l}$ in \eqref{eq:a_I_sum} by $\i_{\nu_l}\cdots \i_{\nu_1};$ it is an algebra anti-automorphism. The third involution is the composition of the first two, $a\mapsto a'^* =: \bar a$. 

The algebra contains special elements called \emph{Clifford vectors}, which are those of the form $\vecx = x_0 +x_1\i_1 +\dots +x_m \i_m$. We denote the corresponding vector space by $V_m$. We will identify $V_m$ with $\RR^{m+1}$ in the following via $\vecx\mapsto (x_0,\ldots,x_m)$. Clifford vectors satisfy $\vecx^*= \vecx$, $\bar \vecx = \vecx'$, and also $\vecx\bar \vecx= \bar \vecx \vecx=|\vecx|^2.$ In particular, non-zero vectors are invertible, since $\vecx^{-1}= \bar \vecx/|\vecx|^2$. Products of invertible vectors are also invertible and form a multiplicative group, $\Delta_m$, called the \emph{Clifford group}. 

We define the matrix groups
\begin{equation}
 \GL(2,C_m)  := \left\{\begin{pmatrix}\veca & \vecb\\ \vecc & \vecd\end{pmatrix} : \begin{aligned}\veca,\vecb,\vecc,\vecd & \in\Delta_m\cup \{0\}; \\ \veca \vecb^*, \vecc \vecd^*, \vecc^*\veca, \vecd^*\vecb &\in V_{m+1};\\ \veca \vecd^*-\vecb \vecc^* & \in \R\setminus\{0\}\end{aligned}\right\},
\end{equation}
\begin{equation}
\SL(2,C_m) := \left\{\begin{pmatrix}\veca & \vecb\\ \vecc & \vecd\end{pmatrix} \in \GL(2,C_m): \veca\vecd^*-\vecb \vecc^* = 1\right\},
\end{equation}
\begin{equation}
 \SU(2, C_m) := \left\{\begin{pmatrix} \veca & \vecb\\ -\vecb' & \veca'\end{pmatrix} \in \SL(2,C_m)\right\}.\label{eq:SUdef}
\end{equation}
We represent hyperbolic $n$-space as the upper half-space
\begin{equation}
\H^n=\{\vecx+ \j y  : \vecx\in V_{n-2},\; y\in \R_{>0}\}, \qquad \j:=\i_{n-1}.
\end{equation}
We will identify $\j$ as the origin $o$ in $\HH^n$ and, following the analogy with the two-dimensional setting, write  $\Re(z):=\vecx$ and $\Im(z):=y$ for the ``real'' and ``imaginary'' part of $z=\vecx+\j y $.
The Riemannian metric of $\H^n$ is defined by
\beq
ds^2 = \frac{\sum_{i=0}^{n-2} dx_i^2 +dy^2}{y^2}\label{eq:arclength} .
\eeq
The corresponding volume element is
\beq
d\!\vol_{\HH^n} = \frac{dx_0\cdots dx_{n-2} dy}{y^n}.\label{eq:volume}
\eeq
The action of $\SL(2,C_{n-2})$ on $\HH^n$ defined by the M\"obius transformation
\beq
\HH^n\to\HH^n,\qquad z\mapsto \begin{pmatrix}\veca & \vecb\\ \vecc & \vecd\end{pmatrix} z = (\veca z+\vecb)(\vecc z+\vecd)^{-1}
\eeq
(with multiplication from $C_{n-1}$) is isometric and orientation-preserving. Its kernel is $\{\pm 1\}$, so that 
\begin{equation}
G:=\PSL(2,C_{n-2})=\SL(2,C_{n-2})/ \{\pm 1\} 
\end{equation}
is isomorphic to the group of orientation-preserving isometries of $\HH^n$ \cite{waterman_mobius_1993}.
We also note that the M\"obius transformations preserve the boundary of hyperbolic space,
\begin{equation}
\partial\HH^n := V_{n-2} \cup \{ \infty \} . 
\end{equation}
The stabilizer of $\j$ under this action is 
\begin{equation}
K:=\PSU(2,C_{n-2})=\SU(2,C_{n-2})/\{\pm 1\} ,
\end{equation}
which is the maximal compact subgroup of $G$. Every $g\in G$ can be uniquely written as (Iwasawa decomposition)
\begin{equation}\label{Iwasawa}
g = n(\vecx) a(y) k
\end{equation}
with 
\begin{equation}
n(\vecx):=\begin{pmatrix} 1 & \vecx \\0&1\end{pmatrix} ,\qquad
a(y) := \begin{pmatrix}
y^{1/2} & 0\\ 0 & y^{-1/2} 
\end{pmatrix} ,\qquad
k\in K,
\end{equation}
and $\vecx\in V_{n-2}$, $y>0$. The Iwasawa decomposition yields a natural identification $\HH^n\cong G/K$. Thus $G$ can be represented as a frame bundle over $\HH^n$ with fiber $K$, which in dimension $n=2$ (only) can be identified with the unit tangent bundle of $\HH^2$.
The Haar measure on $G$ can thus be written as
\begin{equation}\label{iwa}
d\mu(g) = \kappa\, d\!\vol_{\HH^n}(z)\, dm(k)
\end{equation}
where $\kappa$ is a normalizing constant, and $m$ is the Haar probability measure on $K$. We have assumed above that $\mu$ is a probability measure on $G/\Gamma$. Since $G$ is unimodular we have $\mu(\Gamma\bs G)=1$ and hence $\kappa=\vol_{\HH^n}(\Gamma\bs\HH^n)^{-1}$.

\section{Equidistribution of large horospheres}\label{sec:Horo}

We now state the key equidistribution theorem from which all other results will follow. Let $\Gamma<G$ be a lattice, and let
\begin{equation}
\Phi^t=\begin{pmatrix}
\e^{t/2} & 0\\ 0 & \e^{-t/2} 
\end{pmatrix} .
\end{equation}
The one-parameter subgroup $\Phi^\RR:=\{ \Phi^t : t\in\RR\}$ acts by left multiplication on the coset space $G/\Gamma$. 
The horospherical subgroup $\{n(\vecx) : \vecx\in \RR^{n-1} \}$ parametrizes the unstable manifold of $\Phi^t$ for $t\to\infty$. The following theorem states that translates of horospheres become uniformly distributed in $G/\Gamma$.

\begin{theorem}\label{th:horospherical}
Let $\lambda$ be a Borel probability measure on $\RR^{n-1}$, 
absolutely continuous with respect to the Lebesgue measure.
Then, for any bounded continuous function $f:\RR^{n-1}\times G/\Gamma\to \R$
and any family of uniformly bounded continuous functions 
$f_t: \RR^{n-1}\times G/\Gamma\to \RR$ such that $f_t\to f$ as $t\to\infty$,
uniformly on compacta, and for every $g\in G$, we have
\begin{equation}\label{eq:horospherical}
 \lim_{t\to\infty} \int_{\R^{n-1}} f_t\left(\vecx, \Phi^t n(\vecx) g  \right) d\lambda(\vecx) = \int_{\RR^{n-1}\times G/\Gamma} f(\vecx,h)\,d\lambda(\vecx)\,d\mu(h).
\end{equation}
\end{theorem}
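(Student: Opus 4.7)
The plan is to reduce the theorem to the classical Margulis equidistribution of expanding horospheres in $G/\Gamma$, which rests on mixing of the geodesic flow $\Phi^t$.

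As a first step I would prove the ``static'' version in which $f_t\equiv f$; the general case follows by a bootstrapping argument. Writing $d\lambda(\vecx)=\rho(\vecx)\,d\vecx$ and using Stone--Weierstrass density for continuous functions on $\RR^{n-1}\times G/\Gamma$, one reduces to tensor test functions $f(\vecx,h)=\phi(\vecx)F(h)$ with $\phi\in C_c(\RR^{n-1})$ and $F\in C_c(G/\Gamma)$. Setting $\psi:=\phi\rho\in L^1(\RR^{n-1})$ and approximating $\psi$ by linear combinations of indicators of small boxes, the theorem is reduced to showing
\beq
\frac{1}{|B|}\int_B F\bigl(\Phi^t n(\vecx) g\bigr)\,d\vecx\;\longrightarrow\;\int_{G/\Gamma} F\,d\mu
\eeq
for each small box $B\subset\RR^{n-1}$ and each $F\in C_c(G/\Gamma)$.

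To establish this I would apply the Margulis thickening trick: pick a small neighborhood $U_\delta$ of the identity in a subgroup transverse to $N^+=\{n(\vecx):\vecx\in\RR^{n-1}\}$ (e.g.\ $AN^-M$ in a Bruhat-type decomposition of $G\cong\SO(n,1)^\circ$) and replace $F$ by the thickened average $F_\delta(h):=|U_\delta|^{-1}\int_{U_\delta} F(uh)\,du$. Uniform continuity of $F$ gives $F_\delta\to F$ uniformly, and since conjugation by $\Phi^{-t}$ contracts the stable component of $U_\delta$, a standard computation yields $F(\Phi^t n(\vecx) g)=F_\delta(\Phi^t n(\vecx) g)+o_{t\to\infty}(1)$ uniformly in $\vecx\in B$. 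Unfolding $F_\delta$ and using the local diffeomorphism $(u,\vecx)\mapsto u\,n(\vecx)$ from $U_\delta\times B$ into $G$, the thickened integral takes the form $\int_{G} F(\Phi^t w g)\chi_\delta(w)\,dw$ for an explicit $\chi_\delta\in C_c(G)$ supported on $U_\delta\cdot n(B)$ with total mass $\approx|U_\delta||B|$. By mixing of $\Phi^t$ on $L^2_0(G/\Gamma)$---a consequence of the Howe--Moore theorem, applicable since $G$ is a connected simple Lie group and $\Gamma$ is a lattice---this integral converges to $|U_\delta||B|\int_{G/\Gamma} F\,d\mu$, and the desired limit follows upon dividing by $|B|$ and letting first $t\to\infty$ then $\delta\to 0$.

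For the general family $f_t$ I bootstrap from the static case. The latter implies weak-$\ast$ convergence of the pushforward measures $(\vecx\mapsto(\vecx,\Phi^t n(\vecx) g))_\ast\lambda$ to $\lambda\otimes\mu$ on $\RR^{n-1}\times G/\Gamma$. Given $\eta>0$, choose compact sets $K\subset\RR^{n-1}$ and $\scrK\subset G/\Gamma$ with $(\lambda\otimes\mu)(K\times\scrK)>1-\eta$; by Urysohn and weak convergence, the pushforward measures concentrate on $K\times\scrK$ up to mass $2\eta$ for $t$ sufficiently large. On $K\times\scrK$ the assumed uniform convergence $f_t\to f$ applies, while on the complement the uniform bound $\sup_t\|f_t\|_\infty$ controls the contribution; letting $\eta\to 0$ yields (\ref{eq:horospherical}). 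The main technical obstacle is the careful bookkeeping of the thickening in the Clifford parameterization of $G$---matching the dimensions of $N^+$, $A$, $N^-$ and $M$ and estimating the Jacobian of $(u,\vecx)\mapsto un(\vecx)$---but once that is done, mixing does the remaining work.
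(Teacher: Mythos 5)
Your proposal is correct and matches the strategy that the paper itself relies on (the paper does not write out a proof but cites Margulis' thickening argument via mixing, together with an approximation argument from Marklof--Str\"ombergsson, Theorem~5.3, to accommodate $(t,\vecx)$-dependent test functions). You carry out both parts: the static case via Stone--Weierstrass reduction to $C_c$ tensor products, an $L^1$ reduction to box averages, and the Margulis thickening plus Howe--Moore mixing (valid since $G\cong\PSO^+(n,1)$ is connected simple of real rank one); and the passage to $t$- and $\vecx$-dependent $f_t$ via tightness/non-escape of mass from the already-established weak-$*$ convergence. The only place one should be a bit more explicit is that the bootstrap from $C_c$ to bounded continuous $f$ (even in the static case) already uses the non-escape-of-mass step, but your argument supplies exactly this, so the logic closes.
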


This theorem follows from the mixing property of the $\Phi^\RR$ action by an argument that goes back to Margulis' thesis \cite{margulis_some_aspects_2004}; see also the influential paper by Eskin and McMullen \cite{eskin_mixing_1993}. Precise rates of convergence are obtained by S\"odergren in the case of $K$-invariant functions \cite{sodergren_uniform_2012}. The test functions used in these papers usually do not depend on $t$ and $\vecx$. The extension to the formulation used here follows from a simple approximation argument, cf.\ the proof of Theorem 5.3 in \cite{marklof_strombergsson_free_path_length_2010}.
The extra $t$ and $\vecx$ dependence will be useful in proving the equidistribution of translates of spherical averages in $G/\Gamma$, see Section \ref{sec:Sphere}.

The following Corollary of Theorem \ref{th:horospherical} follows from the same argument as the proof of Theorem 5.6 in \cite{marklof_strombergsson_free_path_length_2010}. We recall the definition of limits of a family of sets 
$\{\scrE_t\}_{t\geq t_0}$ in $\R^{n-1}\times G/\Gamma$, where $t_0$ is a fixed real constant:
\begin{align}
	\liminf \scrE_t &:= \bigcup_{t\geq t_0} \bigcap_{s\geq t} \scrE_s , &
	\limsup \scrE_t &:= \bigcap_{t\geq t_0} \bigcup_{s\geq t} \scrE_s .\\
\intertext{We furthermore define}
\label{LIMINFSUPTOP}
	\lim(\inf \scrE_t)^\circ &:= \bigcup_{t\geq t_0} \bigg(\bigcap_{s\geq t} \scrE_s\bigg)^\circ , &
	\lim\overline{\sup \scrE_t} &:= \bigcap_{t\geq t_0} \overline{\bigcup_{s\geq t} \scrE_s} .
\end{align}
Note that $\lim(\inf \scrE_t)^\circ$ is open and $\lim\overline{\sup \scrE_t}$ is closed. $\chi_{\scrE}$ denotes the indicator function of the set $\scrE$; i.e., $\chi_{\scrE}(x)=1$ if $x\in\scrE$ and $\chi_{\scrE}(x)=0$ otherwise.

\begin{cor}\label{charThm}
Let $\lambda$ be a Borel probability measure on $\RR^{n-1}$, 
absolutely continuous with respect to the Lebesgue measure. Then, for any family of subsets $\scrE_t\subset\RR^{n-1}\times G/\Gamma$ and any $g\in G/\Gamma$, we have
\begin{equation}\label{inf}
	\liminf_{t\to\infty} \int_{\RR^{n-1}} \chi_{\scrE_t}(\vecx,\Phi^t n(\vecx) g) \,d\lambda(\vecx) \geq \int_{\lim(\inf \scrE_t)^\circ} d\lambda\,d\mu ,
\end{equation}
and
\begin{equation}\label{sup}
	\limsup_{t\to\infty} \int_{\RR^{n-1}} \chi_{\scrE_t}(\vecx,\Phi^t n(\vecx) g)\,  d\lambda(\vecx) \leq \int_{\lim\overline{\sup \scrE_t}} d\lambda\,d\mu.
\end{equation}
If furthermore $\lambda\times\mu$ gives zero measure to the set $\lim\overline{\sup \scrE_t}\setminus\lim(\inf \scrE_t)^\circ$, then
\begin{equation}\label{lim}
	\lim_{t\to\infty} \int_{\RR^{n-1}}\chi_{\scrE_t}(\vecx,\Phi^t n(\vecx) g)\,  d\lambda(\vecx) = \int_{\limsup \scrE_t} d\lambda\,d\mu .
\end{equation}
\end{cor}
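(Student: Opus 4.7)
The plan is to sandwich $\chi_{\scrE_t}$ between suitable continuous test functions and invoke Theorem \ref{th:horospherical}, in the spirit of a portmanteau-type argument. For the lower bound \eqref{inf}, set $V := \lim(\inf \scrE_t)^\circ$, which is the increasing union of the open sets $V_T := (\bigcap_{s\geq T} \scrE_s)^\circ$. Since $V_T \subset \scrE_s$ for every $s \geq T$, the pointwise inequality $\chi_{V_T} \leq \chi_{\scrE_t}$ holds whenever $t \geq T$. For any bounded continuous $f : \RR^{n-1} \times G/\Gamma \to [0,1]$ with $f \leq \chi_{V_T}$, Theorem \ref{th:horospherical} (taking $f_t \equiv f$) gives
\[
\int f \, d\lambda \, d\mu = \lim_{t\to\infty} \int_{\RR^{n-1}} f(\vecx, \Phi^t n(\vecx) g)\, d\lambda(\vecx) \leq \liminf_{t\to\infty} \int_{\RR^{n-1}} \chi_{\scrE_t}(\vecx, \Phi^t n(\vecx) g)\, d\lambda(\vecx).
\]
Taking the supremum over such $f$ produces $(\lambda\otimes\mu)(V_T)$, because the open set $V_T$ in the metric space $\RR^{n-1} \times G/\Gamma$ carrying the finite Borel measure $\lambda \otimes \mu$ admits an explicit inner approximation via, e.g., $f_{T,\delta}(x) := \min(1, \dist(x, V_T^c)/\delta)$ as $\delta \to 0$. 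Letting $T \to \infty$ and using continuity of measure yields \eqref{inf}.

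For the upper bound \eqref{sup}, set $\scrF_T := \overline{\bigcup_{s\geq T}\scrE_s}$, so these closed sets decrease to $C := \lim\overline{\sup \scrE_t}$. For $t \geq T$ one has $\chi_{\scrE_t} \leq \chi_{\scrF_T}$ pointwise. I would approximate $\chi_{\scrF_T}$ from above by the distance-based Urysohn majorant $g_{T,\delta}(x) := \max(0, 1 - \dist(x, \scrF_T)/\delta)$, which is continuous, takes values in $[0,1]$, equals $1$ on $\scrF_T$, and vanishes outside the $\delta$-neighborhood of $\scrF_T$. Applying Theorem \ref{th:horospherical} to $g_{T,\delta}$ (again with $f_t \equiv g_{T,\delta}$) yields
\[
\limsup_{t\to\infty} \int_{\RR^{n-1}} \chi_{\scrE_t}(\vecx, \Phi^t n(\vecx) g)\, d\lambda(\vecx) \leq \int g_{T,\delta}\, d\lambda\, d\mu.
\]
Sending $\delta \to 0$ (so $g_{T,\delta} \downarrow \chi_{\scrF_T}$, by dominated convergence) and then $T \to \infty$ (so $(\lambda\otimes\mu)(\scrF_T) \downarrow (\lambda\otimes\mu)(C)$, by continuity of measure from above using finiteness of $\lambda \otimes \mu$) gives \eqref{sup}.

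Finally, \eqref{lim} follows immediately: the chain $\lim(\inf \scrE_t)^\circ \subset \liminf\scrE_t \subset \limsup\scrE_t \subset \lim\overline{\sup\scrE_t}$ combined with the measure-zero hypothesis forces all four sets to carry the same $\lambda\otimes\mu$-mass, and \eqref{inf} together with \eqref{sup} sandwich the limit. The main technical obstacle is the non-compactness of $\RR^{n-1} \times G/\Gamma$, which prevents a naive reduction to compactly supported test functions. This is precisely why Theorem \ref{th:horospherical} is formulated for arbitrary bounded continuous test functions, and why the distance-based inner and outer approximants above can be fed in directly.
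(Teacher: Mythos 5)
Your proposal is correct and takes the standard portmanteau-style sandwiching argument: approximating the open set $\lim(\inf\scrE_t)^\circ$ from inside by continuous minorants and the closed set $\lim\overline{\sup\scrE_t}$ from outside by Urysohn-type majorants, then invoking Theorem~\ref{th:horospherical} with $t$-independent test functions. This is exactly the argument the paper refers to (the proof of Theorem~5.6 in Marklof--Str\"ombergsson), so your route matches the paper's intent.
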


\section{Projection statistics for cuspidal observer}\label{sec:Projection-cusp}

We assume in this section that the lattice $\Gamma<G$ contains a parabolic subgroup and hence also a maximal parabolic subgroup $\Gamma_\infty$. By conjugating $\Gamma$ by a suitable element of $G$, we may assume without loss of generality that 
\begin{equation}
\Gamma_\infty = \{ n(\vecm) : \vecm\in\scrL \},
\end{equation}
where $\scrL$ is a lattice in $\RR^{n-1}$ with covolume one. Geometrically, this means that the hyperbolic orbifold $\Gamma\backslash\HH^n$ has a cusp at $\infty$, whose cross-section  we identify with the torus $\TT^{n-1}=\RR^{n-1}/\scrL$ of volume one. $\Gamma_\infty$ is the parabolic stabilizer of the cusp at $\infty$; in dimension $n>2$ there may also be elliptic elements in $\Gamma$ that leave $\infty$ invariant, but we will not include these in $\Gamma_\infty$.

We now position our observer at the cusp at $\infty$ and consider---instead of radial projections \eqref{directions2}---the following vertical projections of the orbit $\wbar=\Gamma w$ onto the torus $\TT^{n-1}$:
\begin{equation}
\scrP_{t,s}^\infty(\wbar):=\{ \Re(\gamma w) :  \gamma\in \Gamma_\infty\backslash\Gamma/\Gamma_w , \; \e^{-t}\leq \Im(\gamma w)<\e^{s-t}  \} ,
\end{equation}
again considered as a multiset. It follows from the work of Good \cite{good_local_1983} that, for $s$ fixed,  $\scrP_{t,s}^\infty(\wbar)$ is uniformly distributed on $\TT^{n-1}$. That is, for every $\scrA\subset\TT^{n-1}$ with boundary of measure zero, we have
\begin{equation}\label{eq:UD2}
\lim_{t\to\infty} \frac{\#(\scrP_{t,s}^\infty(\wbar) \cap \scrA)}{\#\scrP_{t,s}^\infty(\wbar)} = \vol_{\TT^{n-1}}(\scrA).
\end{equation}
We recall also that for $t\to\infty$
\begin{equation}\label{Nasy}
\#\scrP_{t,s}^\infty(\wbar) \sim \vartheta \, \e^{(n-1)t}, 
\end{equation}
with $\vartheta$ as in \eqref{vartheta}.

To measure the fine-scale statistics of $\scrP_{t,s}^\infty(\wbar)$, consider the following rescaled test sets in $\TT^{n-1}$,
\begin{equation}\label{Ats}
\scrB_{t,s}(\scrA,\vecx) = N^{-1/(n-1)} \scrA -\vecx+ \scrL \subset \TT^{n-1}, \qquad N=\#\scrP_{t,s}^\infty(\wbar) ,
\end{equation}
where $\scrA\subset\RR^{n-1}$ is a fixed bounded set with boundary of Lebesgue measure zero. The shift $\vecx\in\TT^{n-1}$ is assumed to be random according to some probability measure $\lambda$. The random variable we use to detect correlations in the directions $\scrP_{t,s}^\infty(\wbar)$ is the number of points in $\scrB_{t,s}(\vecx)$,
\begin{equation}\label{Nts}
\scrN_{t,s}^\infty (\scrA,\vecx;\wbar) := \#(\scrP_{t,s}^\infty  (\wbar) \cap \scrB_{t,s} (\scrA,\vecx)).
\end{equation}
If $\lambda=\vol_{\TT^{n-1}}$ is the normalized Lebesgue measure, we have
\begin{equation}\label{expect220}
\int_{\TT^{n-1}} \scrN_{t,s}^\infty (\scrA,\vecx;\wbar)\, d\!\vol_{\TT^{n-1}}(\vecx) = \vol_{\RR^{n-1}} \scrA .
\end{equation}
Uniform distribution of $\scrP^\infty_{t,s}(\wbar)$ \eqref{eq:UD2} implies that for any $\lambda$ with continuous density, the expectation value of $\scrN_{t,s}^\infty (\scrA,\vecx;\wbar)$ converges:
\begin{equation}\label{expect22}
\lim_{t\to\infty} \int_{\TT^{n-1}} \scrN_{t,s}^\infty (\scrA,\vecx;\wbar)\, d\lambda(\vecx) = \vol_{\RR^{n-1}} \scrA .
\end{equation}

The following theorem considers convergence in distribution for several test sets $\scrA_1,\ldots,\scrA_m$:

\begin{thm}\label{th:distributionA}
Let $\lambda$ be a Borel probability measure on $\TT^{n-1}$ absolutely continuous with respect to the Lebesgue measure. Then, for every $r=(r_1,\ldots,r_m)\in\ZZ_{\geq 0}^m$, $\slim\in(0,\infty]$ and $\scrA=\scrA_1\times\cdots\times\scrA_m$ with $\scrA_j\subset\RR^{n-1}$ bounded with boundary of Lebesgue measure zero,
\begin{equation}
E_{\slim}(r,\scrA;\wbar):=\lim_{t\to\infty} \lambda( \{ \vecx\in\TT^{n-1} : \scrN_{t,s}^\infty (\scrA_j,\vecx;\wbar) = r_j \;\forall j \}) 
\end{equation}
exists and is given by
\begin{equation}\label{limit-dist}
E_{\slim}(r,\scrA;\wbar)=\mu(\{ g\in G/\Gamma  :  \#( g\wbar \cap \scrZ(\slim,\scrA_j) ) = r_j \;\forall j\}) ,
\end{equation}
where
\begin{equation}\label{cyldef0}
\scrZ(\slim,\scrA_j) := \{ z\in\HH^n : \Re z \in \vartheta^{-1/(n-1)}\scrA_j,\; 1\leq \Im z <\e^\slim \}.
\end{equation}
The limit distribution $E_{\slim}(r,\scrA;\wbar)$ is independent of $\lambda$, and continuous in $\slim$ and $\scrA$.
\end{thm}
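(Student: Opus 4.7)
\emph{Plan.} I would prove Theorem~\ref{th:distributionA} by recasting $\scrN_{t,s}^\infty(\scrA_j,\vecx;\wbar)$ as the number of points of a randomized $\Gamma$-orbit lying in a fixed cuspidal cylinder in $\HH^n$, and invoking Corollary~\ref{charThm}.

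\textbf{Geometric reformulation.} The M\"obius action of $h(t,\vecx):=\Phi^tn(\vecx)$ sends $\gamma w\in\HH^n$ to a point with real part $e^t(\Re(\gamma w)+\vecx)$ and imaginary part $e^t\Im(\gamma w)$. Setting $c_t:=e^tN_t^{-1/(n-1)}$ (so that $c_t\to\vartheta^{-1/(n-1)}$ by \eqref{Nasy}) and
\[
\scrZ_t(s,\scrA_j):=\{z\in\HH^n:\Re z\in c_t\scrA_j,\;1\le\Im z<e^s\},
\]
one checks that $h(t,\vecx)\in G/\Gamma$ is well-defined on $\TT^{n-1}$ (since $\Gamma_\infty\subset\Gamma$ absorbs $\scrL$-translates of $\vecx$) and that for $t$ large (depending only on $\diam\scrA_j$ and the systole of $\scrL$) each class in $\Gamma_\infty\backslash\Gamma/\Gamma_w$ contributes at most one point to $h(t,\vecx)\wbar\cap\scrZ_t(s,\scrA_j)$. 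Thus
\[
\scrN_{t,s}^\infty(\scrA_j,\vecx;\wbar)=\#\bigl(h(t,\vecx)\wbar\cap\scrZ_t(s,\scrA_j)\bigr),
\]
and the set $\scrZ_t(s,\scrA_j)$ converges to $\scrZ(s,\scrA_j)$ as $t\to\infty$.

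\textbf{Application of Corollary~\ref{charThm}.} Lift $\lambda$ to a compactly supported probability measure $\tilde\lambda$ on a fundamental domain for $\scrL$ in $\RR^{n-1}$, take $g=e$, and define
\begin{align*}
\scrE_t &:= \{(\vecx,h)\in\RR^{n-1}\times G/\Gamma : \#(h\wbar\cap\scrZ_t(s,\scrA_j))=r_j\ \forall j\},\\
\scrE   &:= \{(\vecx,h)\in\RR^{n-1}\times G/\Gamma : \#(h\wbar\cap\scrZ(s,\scrA_j))=r_j\ \forall j\}.
\end{align*}
The probability in \eqref{limit-dist} equals $\int_{\RR^{n-1}}\chi_{\scrE_t}(\vecx,h(t,\vecx))\,d\tilde\lambda(\vecx)$. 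Granting that $\lim(\inf\scrE_t)^\circ$ and $\lim\overline{\sup\scrE_t}$ both agree with $\scrE$ up to a $\tilde\lambda\times\mu$-null set, equation \eqref{lim} yields
\[
\lim_{t\to\infty}\int\chi_{\scrE_t}(\vecx,h(t,\vecx))\,d\tilde\lambda(\vecx)=(\tilde\lambda\times\mu)(\scrE)=\mu\bigl(\{h:\#(h\wbar\cap\scrZ(s,\scrA_j))=r_j\ \forall j\}\bigr),
\]
where the last equality uses that $\scrE$ is $\vecx$-independent and $\tilde\lambda$ has total mass one. This both proves \eqref{limit-dist} and shows independence from $\lambda$.

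\textbf{Null boundary, continuity, and main obstacle.} The agreement of $\liminf/\limsup$ with $\scrE$ reduces to the claim that for $\mu$-a.e.\ $h$ the discrete orbit $h\wbar$ avoids $\partial\scrZ(s,\scrA_j)$ for each $j$: for such $h$, since $\scrZ_t\to\scrZ$ and $h\wbar$ is discrete, points of $h\wbar$ are eventually classified correctly as inside or outside, so $(\vecx,h)\in\scrE_t$ eventually iff $(\vecx,h)\in\scrE$. The null-boundary claim itself is a Fubini argument using the $G$-invariance of $\mu$: for any smooth codimension-one submanifold $\Sigma\subset\HH^n$ and any $w'\in\wbar$, the set $\{h\in G/\Gamma:hw'\in\Sigma\}$ is $\mu$-null, and $\partial\scrZ(s,\scrA_j)$ is a locally finite union of such pieces (the lateral faces $c\,\partial\scrA_j$ plus the horospheres $\Im z\in\{1,e^s\}$). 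Continuity of $E_s(r,\scrA;\wbar)$ in $(s,\scrA)$ follows by the same argument applied to $\scrZ(s',\scrA'_j)\triangle\scrZ(s,\scrA_j)$. The principal difficulty is precisely this null-boundary step: $\scrZ(s,\scrA_j)$ protrudes into the cusp, so to run Fubini uniformly over the countable orbit $\wbar$ one must first verify that $\#(h\wbar\cap\scrZ(s,\scrA_j))$ is $\mu$-a.s.\ finite, which follows from $\vol_{\HH^n}\scrZ(s,\scrA_j)<\infty$ via a Siegel-type mean-value computation.
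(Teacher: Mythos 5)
Your overall strategy is the same as the paper's: rewrite $\scrN_{t,s}^\infty(\scrA_j,\vecx;\wbar)$ as a count of orbit points in a rescaled cuspidal cylinder, identify the integrand as $\chi_{\scrE_t}(\vecx,\Phi^t n(\vecx))$, and invoke Corollary~\ref{charThm}. The geometric reformulation and the identity $\scrN_{t,s}^\infty(\scrA_j,\vecx;\wbar)=\#(h(t,\vecx)\wbar\cap\scrZ_t(s,\scrA_j))$ are correct, and the observation that $c_t\to\vartheta^{-1/(n-1)}$ is the paper's eq.~\eqref{eq:exact_count} combined with \eqref{Nasy}.

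The genuine gap lies in the ``null boundary'' step. Your argument establishes that for $\lambda\times\mu$-a.e.\ $(\vecx,h)$ one has $\chi_{\scrE_t}(\vecx,h)\to\chi_{\scrE}(\vecx,h)$, i.e.\ $\liminf\scrE_t=\limsup\scrE_t=\scrE$ up to a null set. But Corollary~\ref{charThm} does \emph{not} have a hypothesis about the set-theoretic $\limsup\setminus\liminf$: its hypothesis is that $\lambda\times\mu$ gives measure zero to $\lim\overline{\sup\scrE_t}\setminus\lim(\inf\scrE_t)^\circ$, which involves the topological closure and interior defined in \eqref{LIMINFSUPTOP}. One always has $\lim(\inf\scrE_t)^\circ\subset\liminf\scrE_t\subset\limsup\scrE_t\subset\lim\overline{\sup\scrE_t}$, and the two outer inclusions can be strict on a set of positive measure even when $\liminf\scrE_t=\limsup\scrE_t$ exactly. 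To bridge the gap you would need something like: for a.e.\ good $h$, there is a $\mu$-open neighborhood $U\ni h$ and a $t_0$ such that $U\times\RR^{n-1}\subset\scrE_t$ for all $t\geq t_0$ (or the complementary statement). That requires joint uniformity in $(h,t)$ near the cusp when $s=\infty$, which is not supplied.

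The paper avoids this issue entirely by the sandwich mechanism: Lemma~\ref{sat-lem} produces $t$-\emph{independent} sets $\scrE_s^+\subset\scrE_t\subset\scrE_s^-$ built from slightly shrunk and enlarged $\scrA_j^\pm$, so that $\lim\overline{\sup\scrE_t}\subset\overline{\scrE_s^-}$ and $\lim(\inf\scrE_t)^\circ\supset(\scrE_s^+)^\circ$ hold trivially. Lemma~\ref{lem:small_sets} (which is exactly the Siegel/unfolding computation you allude to) then bounds $\mu(\overline{\scrE_s^-}\setminus(\scrE_s^+)^\circ)$ by a volume term, circumventing any need to verify the null-boundary condition for the original $\scrE$. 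This sandwich also delivers the continuity estimate \eqref{conti} for free. Finally, for $s=\infty$ the paper needs a dedicated truncation argument (Lemma~\ref{lem:truncation}) to pass from bounded cylinders to the cuspidal one; your remark that finiteness ``follows from $\vol_{\HH^n}\scrZ(s,\scrA_j)<\infty$'' is the right ingredient, but it is not automatic that the convergence holds uniformly enough without the truncation step, which you would also need to supply.
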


By ``{\em continuous in $\scrA$}'' we mean here more specifically that there is a constant $C$ such that
\begin{equation}\label{conti}
 \big| E_{\slim}(r,\scrA;\wbar) - E_{\slim}(r,\scrB;\wbar) \big| \leq C  \vol_{\RR^{m(n-1)}}(\scrB\setminus\scrA)
\end{equation}
for all product sets $\scrA\subset \scrB\subset\RR^{m(n-1)}$ as in Theorem \ref{th:distributionA}.

The following lemma implies the continuity asserted in Theorem \ref{th:distributionA}.
For $\scrA\subset \H^n$ and $r\in\Z_{\ge 0}$, write
\begin{align}\label{eq:rpoints}
[\scrA]_{\le r} & := \{g\in G/\Gamma  : \#(\scrA\cap g\wbar)\le r\} \\
[\scrA]_{= r} & := \{g\in G/\Gamma  : \#(\scrA\cap g\wbar) = r\} \\
[\scrA]_{\ge r} & := \{g\in G/\Gamma  : \#(\scrA\cap g\wbar)\ge r \} .
\end{align}

\begin{lemma}\label{lem:small_sets}
For any measurable $\scrA\subset \scrB\subset \H^n$ with finite volume, we have
\beq \label{eq1:small_sets}
\mu ([\scrA]_{\ge 1}) \le \frac{\vol_{\H^n} \scrA}{\#\Gamma_w \vol_{\HH^n}(\Gamma\bs\HH^n)} ,
\eeq
\beq \label{eq2:small_sets}
|\mu ([\scrA]_{=r}) - \mu ([\scrB]_{=r})|  \le \frac{\vol_{\H^n} (\scrB\setminus \scrA) }{\#\Gamma_w \vol_{\HH^n}(\Gamma\bs\HH^n)} 
\eeq
and
\beq \label{eq3:small_sets}
0\leq \mu ([\scrA]_{\leq r}) - \mu ([\scrB]_{\leq r})  \le \frac{\vol_{\H^n} (\scrB\setminus \scrA) }{\#\Gamma_w \vol_{\HH^n}(\Gamma\bs\HH^n)} .
\eeq
\end{lemma}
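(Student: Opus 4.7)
The plan is to first establish a Siegel-type integral formula for the mean of $\#(g\wbar \cap \scrA)$ over $g \in G/\Gamma$, and then obtain the three bounds as elementary consequences. Specifically, I will show
\begin{equation}\label{eq:plan-siegel}
\int_{G/\Gamma} \#(g\wbar \cap \scrA)\, d\mu(g) = \frac{\vol_{\H^n} \scrA}{\#\Gamma_w \vol_{\HH^n}(\Gamma\bs\HH^n)},
\end{equation}
from which \eqref{eq1:small_sets} follows immediately by Markov's inequality, since $[\scrA]_{\ge 1} = \{g : \#(g\wbar\cap \scrA)\ge 1\}$.

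To prove \eqref{eq:plan-siegel}, I would write $\#(g\wbar \cap \scrA) = \sum_{\gamma \in \Gamma/\Gamma_w} \chi_\scrA(g\gamma w)$ and unfold using the tower $\Gamma_w \subset \Gamma \subset G$. With counting measure on $\Gamma$ compatible with the probability $\mu$ on $G/\Gamma$, the unfolded integral becomes
\begin{equation*}
\int_{G/\Gamma} \sum_{\gamma \in \Gamma/\Gamma_w} \chi_\scrA(g\gamma w)\, d\mu(g) = \frac{1}{\#\Gamma_w}\int_G \chi_\scrA(gw)\, d\mu_G(g),
\end{equation*}
where $\mu_G$ denotes Haar measure on $G$ in the normalization from \eqref{iwa}. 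The integrand is right-$K_w$-invariant (where $K_w$ is the $G$-stabilizer of $w$), so by transitivity of $G$ on $\HH^n$ and right-invariance of $\mu_G$, we may replace $w$ by the base point $o$ and apply \eqref{iwa} directly, giving $\int_G \chi_\scrA(gw)\,d\mu_G = \kappa\, \vol_{\HH^n}(\scrA)$ with $\kappa = \vol_{\HH^n}(\Gamma\bs\HH^n)^{-1}$. This yields \eqref{eq:plan-siegel}.

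The remaining two estimates follow by set-theoretic manipulation. Suppose $\scrA\subset\scrB$. If $g \notin [\scrB\setminus\scrA]_{\ge 1}$ (i.e., $g\wbar$ avoids $\scrB\setminus\scrA$), then $\#(g\wbar\cap\scrA)=\#(g\wbar\cap\scrB)$, hence
\begin{equation*}
[\scrA]_{=r}\,\triangle\,[\scrB]_{=r} \;\subset\; [\scrB\setminus\scrA]_{\ge 1}, \qquad [\scrA]_{\le r}\setminus[\scrB]_{\le r} \;\subset\; [\scrB\setminus\scrA]_{\ge 1}.
\end{equation*}
Since $\scrA\subset\scrB$ also implies $[\scrB]_{\le r}\subset[\scrA]_{\le r}$ (fewer points in $\scrA$ than in $\scrB$), the nonnegativity in \eqref{eq3:small_sets} is automatic, and both \eqref{eq2:small_sets} and the upper bound in \eqref{eq3:small_sets} reduce to applying \eqref{eq1:small_sets} with $\scrA$ replaced by $\scrB\setminus\scrA$.

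The only real obstacle is keeping the normalization constants consistent in the unfolding step; all the inputs (Haar decomposition, finite orbit-stabilizer at $w$, right-invariance of $\mu_G$) are standard, and the rest is bookkeeping.
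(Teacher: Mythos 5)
Your proposal is correct and follows essentially the same route as the paper: the mean-value identity \eqref{eq:plan-siegel} via unfolding over $\Gamma_w\subset\Gamma\subset G$ and the Iwasawa/Haar normalization \eqref{iwa}, Markov/Chebyshev for the first inequality, and the set-theoretic containment of symmetric differences in $[\scrB\setminus\scrA]_{\ge 1}$ for the other two. The only cosmetic difference is that you spell out the right-$K_w$-invariance step to replace $w$ by the base point, which the paper treats as immediate.
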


\begin{proof}
By Chebyshev's inequality, 
\beq
\mu \{g\in G/\Gamma  : \#(\scrA \cap g\wbar) \ge 1\}
\le \int_{G/\Gamma} \#(\scrA\cap g\wbar) d\mu(g).
\eeq
Combining the integral over $G/\Gamma$ with the sum over $\Gamma$ gives, by the standard unfolding technique,
\begin{equation}\label{thesame}
\begin{split}
\int_{G/\Gamma} \#(\scrA\cap g\wbar) d\mu(g) 
& = \int_{G/\Gamma} \sum_{\gamma\in\Gamma/\Gamma_w} \chi_\scrA(g\gamma w)d\mu(g)\\
&=\frac{1}{\#\Gamma_w}  \int_{G}  \chi_\scrA(gw) d\mu(g) \\
&=\frac{1}{\#\Gamma_w}  \int_{G}  \chi_\scrA(g\j) d\mu(g) \\
&= \frac{\vol_{\H^n} \scrA}{\#\Gamma_w \vol_{\HH^n}(\Gamma\bs\HH^n)} 
\end{split}
\end{equation}
in view of \eqref{iwa}. This proves \eqref{eq1:small_sets}. Relations \eqref{eq2:small_sets} and \eqref{eq3:small_sets} follow from the inequalities
\beq \label{eq2b:small_sets}
|\mu ([\scrA]_{=r}) - \mu ([\scrB]_{=r})|  \le \mu ([\scrB\setminus \scrA]_{\ge 1}) 
\eeq
and
\beq \label{eq3b:small_sets}
0\leq \mu ([\scrA]_{\leq r}) - \mu ([\scrB]_{\leq r})  \le \mu ([\scrB\setminus \scrA]_{\ge 1}) .
\eeq
\end{proof}

\begin{lemma}\label{sat-lem}
Under the hypotheses of Theorem \ref{th:distributionA}, given $\epsilon>0$ there exist $t_0$ and bounded subsets $\scrA_j^-,\scrA_j^+\subset\RR^{n-1}$ with boundary of measure zero such that 
\begin{equation}
\scrA_j^-\subset\scrA_j\subset\scrA_j^+, \qquad \vol_{\RR^{n-1}}(\scrA_j^+\setminus\scrA_j^-)<\epsilon
\end{equation}
and, for all $t\geq t_0$,
\begin{equation}
\#( \Phi^t n(\vecx) \wbar \cap \scrZ(\slim,\scrA_j^-) ) \leq \scrN_{t,s}^\infty (\scrA_j,\vecx;\wbar)  
\leq  \#( \Phi^t n(\vecx) \wbar \cap \scrZ(\slim,\scrA_j^+) ) .
\end{equation}
\end{lemma}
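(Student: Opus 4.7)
The plan is to reformulate both $\scrN_{t,s}^\infty(\scrA_j,\vecx;\wbar)$ and $\#(\Phi^t n(\vecx) \wbar \cap \scrZ(\slim, \scrA_j^\pm))$ as sums of $0/1$ indicators over the cosets in $\Gamma_\infty\bs\Gamma/\Gamma_w$, and then construct $\scrA_j^\pm$ as suitable $\delta$-neighborhoods of $\scrA_j$ so that an asymptotic rescaling error $c_t\to 1$ is absorbed into this thickening. The key M\"obius calculation is
\[
\Phi^t n(\vecx)(\vecy+\j u) = \e^t(\vecy+\vecx) + \j\,\e^t u,
\]
from which $\Phi^t n(\vecx)\gamma w \in \scrZ(\slim,\scrB)$ is equivalent to the shell condition $\Im(\gamma w) \in [\e^{-t}, \e^{\slim-t})$ together with $\Re(\gamma w)+\vecx \in \e^{-t}\vartheta^{-1/(n-1)}\scrB$. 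Setting $N := \#\scrP_{t,s}^\infty(\wbar)$ and $c_t := \e^{-t}\vartheta^{-1/(n-1)} N^{1/(n-1)}$, the asymptotic \eqref{Nasy} gives $c_t\to 1$ and turns the real-part condition into $\Re(\gamma w)+\vecx \in N^{-1/(n-1)}(c_t\scrB)$.

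For a fixed coset $\Gamma_\infty\gamma \in \Gamma_\infty\bs\Gamma/\Gamma_w$, the representatives $n(\vecm)\gamma$, $\vecm\in\scrL$, produce points $\Phi^t n(\vecx)n(\vecm)\gamma w \in \Phi^t n(\vecx)\wbar$ with a common imaginary part $\e^t\Im(\gamma w)$ and real parts forming the shifted lattice $\e^t(\Re(\gamma w)+\vecx)+\e^t\scrL$. Provided $\e^t\lambda_1(\scrL) > \vartheta^{-1/(n-1)}\diam(\scrA_j^+)$, this shifted lattice meets $\vartheta^{-1/(n-1)}\scrA_j^\pm$ in at most one point, so each coset contributes $0$ or $1$ to $\#(\Phi^t n(\vecx)\wbar \cap \scrZ(\slim,\scrA_j^\pm))$. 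We therefore have
\[
\#(\Phi^t n(\vecx)\wbar \cap \scrZ(\slim,\scrA_j^\pm)) = \#\bigl\{\Gamma_\infty\gamma\Gamma_w : \text{shell, and } \Re(\gamma w)+\vecx \in N^{-1/(n-1)}(c_t\scrA_j^\pm)+\scrL\bigr\},
\]
and unwinding the definitions gives the same formula for $\scrN_{t,s}^\infty(\scrA_j,\vecx;\wbar)$ with $\scrA_j$ in place of $c_t\scrA_j^\pm$.

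It therefore suffices to construct $\scrA_j^\pm$ with boundary of Lebesgue measure zero, $\scrA_j^- \subset \scrA_j \subset \scrA_j^+$, $\vol(\scrA_j^+\setminus\scrA_j^-) < \epsilon$, and $c_t\scrA_j^- \subset \scrA_j \subset c_t\scrA_j^+$ for $t\geq t_0$. Take the open inner and outer $\delta$-neighborhoods
\[
\scrA_{j,\delta}^+ := \{\vecx : \dist(\vecx,\scrA_j) < \delta\}, \qquad \scrA_{j,\delta}^- := \{\vecx\in\scrA_j : \dist(\vecx,\RR^{n-1}\setminus\scrA_j) > \delta\}.
\]
Since $\partial\scrA_j$ is Lebesgue null, $\vol(\scrA_{j,\delta}^+\setminus\scrA_{j,\delta}^-)\to 0$ as $\delta\to 0$; by coarea applied to the $1$-Lipschitz distance functions, for almost every $\delta$ both $\partial\scrA_{j,\delta}^+$ and $\partial\scrA_{j,\delta}^-$ are also null. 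Choose such a $\delta$ with volume difference less than $\epsilon$, set $\scrA_j^\pm := \scrA_{j,\delta}^\pm$, and let $R := \sup_{\vecy\in\scrA_j^+}|\vecy|$. Since every $\vecx\in\scrA_j^-$ satisfies $B(\vecx,\delta)\subset\scrA_j$, the inclusion $c_t\scrA_j^- \subset \scrA_j$ holds as soon as $|c_t-1|R < \delta$; the inclusion $\scrA_j\subset c_t\scrA_j^+$ follows by the analogous argument applied to $c_t^{-1}$. Take $t_0$ large enough that these bounds and the lattice-separation bound hold for every $j=1,\dots,m$. The principal obstacle is coordinating the three perturbation scales---$c_t$, $\delta$, and the $\scrL$-translates---into a single threshold; once the coset reformulation is in place, the remaining inclusions are elementary Euclidean geometry.
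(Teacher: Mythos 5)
Your proof is correct and follows the same route as the paper's very terse argument, which records the exact identity $\scrN_{t,s}^\infty(\scrA_j,\vecx;\wbar)=\#\big(\Phi^t n(\vecx)\wbar\cap\scrZ(\slim,\e^t\vartheta^{1/(n-1)}N^{-1/(n-1)}\scrA_j)\big)$ and then invokes \eqref{Nasy} to deduce that the scaling factor $\e^t\vartheta^{1/(n-1)}N^{-1/(n-1)}\to 1$. You merely spell out the details the paper leaves implicit: the M\"obius computation behind the identity, the lattice-separation argument showing each double coset contributes at most one point once $t$ is large, and the explicit construction of the inner/outer sets $\scrA_j^\pm$ with null boundary.
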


\begin{proof}
In view of \eqref{Ats} we have
\begin{equation}\label{eq:exact_count}
\scrN_{t,s}^\infty (\scrA_j,\vecx;\wbar)  = \#( \Phi^t n(\vecx) \wbar \cap \scrZ(s, \e^{t} \vartheta^{1/(n-1)}N^{-1/(n-1)} \scrA_j) ) .
\end{equation}
The asymptotics \eqref{Nasy} shows that $ \e^{t} \vartheta^{1/(n-1)}N^{-1/(n-1)}\to 1$ and hence the lemma.
\end{proof}

We extend the definition of the cuspidal cone in \eqref{cyldef0} to
\begin{equation}\label{cyldef007}
\scrZ(a,b,\scrA_j) := \{ z\in\HH^n : \Re z \in \vartheta^{-1/(n-1)} \scrA_j,\; \e^a\leq \Im z < \e^b \},
\end{equation}
where $-\infty<a<b\leq\infty$.

\begin{lemma}
 \label{lem:truncation}
 Under the hypotheses of Theorem \ref{th:distributionA}, we have for all $s\ge 0$ 
 \begin{multline}\label{eq:truncation}
 \limsup_{t\to\infty} 
 \big\lvert\lambda(\{\vecx\in \TT^{n-1}  : \#( \Phi^t n(\vecx) \wbar \cap \scrZ(\infty,\scrA_j) ) \le r_j\,\forall j \}) \\ -
 \lambda(\{\vecx\in \TT^{n-1} :  \#( \Phi^t n(\vecx) \wbar \cap \scrZ(\slim,\scrA_j) ) \le r_j\,\forall j \})\big\rvert
 \le  \e^{-(n-1)s/2} (\vol_{\R^{n-1}}\widetilde\scrA)^{1/2} 
\end{multline}
where $\widetilde\scrA=\cup_j\scrA_j$.
\end{lemma}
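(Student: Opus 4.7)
The strategy is to bound the difference of the two probabilities by the probability that $\Phi^t n(\vecx)\wbar$ has a point in a thin tail region above height $\e^s$, then to estimate that probability via Markov's inequality combined with a direct unfolding computation using the parabolic subgroup $\Gamma_\infty$. Taking both cones with the common base $\vartheta^{-1/(n-1)}\scrA_j$ so that $\scrZ(s,\scrA_j)\subset\scrZ(\infty,\scrA_j)$, the two events in the lemma differ only on the set where $\#(\Phi^t n(\vecx)\wbar\cap\scrZ(s,\infty,\scrA_j))\ge 1$ for some $j$. A union bound over $j$ therefore reduces the lemma to showing
\[
\limsup_{t\to\infty}\lambda\bigl(\bigl\{\vecx\in\TT^{n-1} : \#\bigl(\Phi^t n(\vecx)\wbar\cap\scrZ(s,\infty,\widetilde{\scrA})\bigr)\ge 1\bigr\}\bigr)\le \e^{-(n-1)s/2}(\vol_{\RR^{n-1}}\widetilde{\scrA})^{1/2}.
\]

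Next apply the elementary bound $\Pr(X\ge 1)\le\min(1,E[X])\le\sqrt{E[X]}$, valid for any nonnegative random variable (since $\min(1,u)\le\sqrt u$ for all $u\ge 0$), reducing the task to the first-moment estimate
\[
\limsup_{t\to\infty}\int_{\TT^{n-1}}\#\bigl(\Phi^t n(\vecx)\wbar\cap\scrZ(s,\infty,\widetilde{\scrA})\bigr)\,d\lambda(\vecx)\le \e^{-(n-1)s}\vol_{\RR^{n-1}}(\widetilde{\scrA}).
\]
To compute this, I would write the count as a sum over $\gamma\in\Gamma/\Gamma_w$, group by $\Gamma_\infty$-orbits, and use the explicit action $\Phi^t n(\vecx)\gamma w=\e^t(\gamma w+\vecx)$. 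The height condition $\Im(\Phi^t n(\vecx)\gamma w)\ge\e^s$ is independent of $\vecx$ and reads $\Im\gamma w\ge\e^{s-t}$; for each $\bar\gamma\in\Gamma_\infty\bs\Gamma/\Gamma_w$ satisfying this, unfolding the $\Gamma_\infty$-sum from $\TT^{n-1}$ to $\RR^{n-1}$ and the change of variables $\vecu=\e^t(\Re\bar\gamma w+\vecx)$ evaluates its contribution to $\e^{-(n-1)t}\vartheta^{-1}\vol_{\RR^{n-1}}(\widetilde{\scrA})$, weighted by the $\lambda$-density near $-\Re\bar\gamma w$. Summing over $\bar\gamma$ with $\Im\bar\gamma w\ge\e^{s-t}$, and invoking the uniform distribution \eqref{eq:UD2} of the projected lattice points on $\TT^{n-1}$ (which replaces the density weight by the unit Lebesgue average) together with the lattice-point asymptotic \eqref{Nasy} in the form $\#\scrP^\infty_{t-s,\infty}(\wbar)\sim\vartheta_\infty\e^{(n-1)(t-s)}$, yields the desired bound $\vol_{\RR^{n-1}}(\widetilde{\scrA})\e^{-(n-1)s}$ in the normalization $\vartheta=\vartheta_\infty$ appropriate to $\scrZ(\infty,\cdot)$.

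Taking the square root completes the argument. The main technical obstacle is handling a general absolutely continuous $\lambda$ in the unfolding step: each $\bar\gamma$ contributes an integral of the density $d\lambda/d\vol$ over a shrinking torus cell concentrated near $-\Re\bar\gamma w$, and passing from the $\lambda$-weighted sum to the Lebesgue average as $t\to\infty$ requires combining the cuspidal equidistribution \eqref{eq:UD2} with the shell count \eqref{Nasy}. An alternative route is to apply Theorem~\ref{th:horospherical} to bounded truncations $\scrZ(s,N,\widetilde{\scrA})$ and pass to $N=\infty$ by monotone convergence, but that approach still needs a uniform-in-$t$ geometric estimate on the deep-cusp contribution to control the unbounded height of the tail cone.
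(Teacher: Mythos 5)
Your opening reduction is exactly the paper's: both proofs bound the symmetric difference of the two events by $\lambda\{\vecx:\#(\Phi^t n(\vecx)\wbar\cap\scrZ(s,\infty,\widetilde\scrA))\geq 1\}$ via a union bound, and both then aim to control this probability by a first-moment estimate. The divergence is in how the first moment enters, and this is where your argument has a genuine gap.

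You apply Markov's inequality \emph{with respect to $\lambda$} and then use $\min(1,E)\leq\sqrt E$, so you need the estimate
\[
\limsup_{t\to\infty}\int_{\TT^{n-1}}\#\big(\Phi^t n(\vecx)\wbar\cap\scrZ(s,\infty,\widetilde\scrA)\big)\,d\lambda(\vecx)\leq \e^{-(n-1)s}\vol_{\R^{n-1}}\widetilde\scrA .
\]
This is the asymptotic \eqref{expect22}, which the paper establishes only for $\lambda$ with \emph{continuous} density, whereas Theorem~\ref{th:distributionA} permits any absolutely continuous $\lambda$, hence any $L^1$ density $\lambda'$. Your unfolding sketch computes the contribution of each $\bar\gamma$ as the integral of $\lambda'$ over a shrinking cell near $-\Re\bar\gamma w$; but if $\lambda'$ is unbounded, nothing you have written controls the sum of these integrals uniformly in $t$. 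The reference to the weak-$*$ equidistribution \eqref{eq:UD2} does not help here: \eqref{eq:UD2} is a statement about counting in fixed Jordan sets (equivalently, about continuous test densities), and does not pass to $L^1$ densities without a uniform-integrability argument that you do not supply. Your suggested alternative via Theorem~\ref{th:horospherical} and monotone convergence runs into the same wall: you yourself note it "still needs a uniform-in-$t$ geometric estimate."

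The paper resolves exactly this difficulty with a device that is absent from your proposal: it replaces $\lambda$ by the truncated measure $\lambda_R$ with density $\min(\lambda',R)$, paying an error at most the mass of $\lambda-\lambda_R$, and then bounds $\int\scrN\,d\lambda_R\leq R\int\scrN\,d\!\vol_{\TT^{n-1}}$. The crucial point is that the Lebesgue average is given by the \emph{exact identity} \eqref{expect220} (no equidistribution is invoked at all), so one obtains $R\,\eta_{t,s}^{n-1}\e^{-(n-1)s}\vol\widetilde\scrA+O(1/R)$ uniformly, and optimizing over $R$ gives the bound. Your $\min(1,E)\leq\sqrt E$ trick would have been a slightly cleaner way to finish \emph{after} a truncation step of this kind (and would even avoid the factor-of-$2$ the paper's optimization actually produces), but applied directly to $\lambda$ it presupposes a first-moment bound that the hypotheses do not give you. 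To repair the argument, insert the density truncation before invoking Markov, or restrict to bounded densities and then approximate a general $\lambda$ in total variation.
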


\begin{proof}
Using equation \eqref{Nts} we bound from above the left hand side of \eqref{eq:truncation}, without the $\limsup$, by
\begin{multline}
\lambda(\{ \vecx\in\TT^{n-1} : \#( \Phi^t n(\vecx) \wbar \cap \scrZ(\slim,\infty,\scrA_j) ) \ge 1\, \text{for some $j$}\}) \\
=
\lambda(\{ \vecx\in\TT^{n-1} : \#( \Phi^t n(\vecx) \wbar \cap \scrZ(\slim,\infty,\widetilde\scrA) ) \ge 1\}) .
\label{eq:together}
\end{multline}
Note that
\begin{equation}
\begin{split}
\#( \Phi^t n(\vecx) \wbar \cap \scrZ(\slim,\infty,\widetilde\scrA) )
& = \#( \Phi^{t-\slim} n(\vecx) \wbar \cap \scrZ(0,\infty,\e^{-\slim}\widetilde\scrA) ) \\
& = \scrN_{t-s,\infty}^\infty (\eta_{t,s} \e^{-s} \widetilde\scrA,\vecx;\wbar),
\end{split}
\end{equation}
where $\eta_{t,s} \to 1$ as $t\to\infty$.

For any $R\geq 1$, Chebyshev's inequality implies the bound
\begin{align}
\label{eq:choose_j}\eqref{eq:together} \le \int_{\TT^{n-1}} \scrN_{t-s,\infty}^\infty (\eta_{t,s} \e^{-s} \widetilde\scrA,\vecx;\wbar)\, d\lambda_R(\vecx) +\frac1R,
\end{align}
where $\lambda_R$ is the Borel measure with density $\lambda_R'(\vecx):=\min(\lambda'(\vecx),R)$. 
In view of eq.~\eqref{expect220},
\begin{equation}
\begin{split}
& \int_{\TT^{n-1}} \scrN_{t-s,\infty}^\infty (\eta_{t,s} \e^{-s} \widetilde\scrA,\vecx;\wbar)\, d\lambda_R(\vecx) \\
& \leq R \int_{\TT^{n-1}} \scrN_{t-s,\infty}^\infty (\eta_{t,s} \e^{-s} \widetilde\scrA,\vecx;\wbar)\, d\!\vol_{\TT^{n-1}}(\vecx) \\
& = R\, \eta_{t,s}^{n-1} \e^{-(n-1)s} \vol_{\RR^{n-1}}\widetilde\scrA \\
& \to R\, \e^{-(n-1)s} \vol_{\RR^{n-1}}\widetilde\scrA 
\end{split}
\end{equation}
as $t\to\infty$.
The lemma is obtained by choosing $R=(\e^{-(n-1)s}\vol_{\RR^{n-1}}\widetilde\scrA )^{-1/2}$.
\end{proof}

\begin{proof}[Proof of Theorem \ref{th:distributionA}]
It is sufficient to show that, for every $r=(r_1,\ldots,r_m)\in\ZZ_{\geq 0}^m$ and $\scrA=\scrA_1\times\cdots\times\scrA_m$ with $\scrA_j\subset\RR^{n-1}$ bounded with boundary of Lebesgue measure zero,
\begin{align}\label{eq:convergence_in_distribution}
\lambda( \{ \vecx\in\TT^{n-1} : \scrN_{t,s}^\infty (\scrA_j,\vecx;\wbar) \leq r_j \;\forall j \}) 
\to \mu(\{ g\in G/\Gamma  :  \#( g\wbar \cap \scrZ(\slim,\scrA_j) ) \leq r_j \;\forall j\}) 
\end{align}
as $t \to\infty$. 

The left hand side equals
\begin{equation}
\int_{\TT^{n-1}} \chi_{\scrE_t}(\vecx, \Phi^t n(\vecx)) \,d\lambda(\vecx)
\end{equation}
with 
\begin{equation}
\scrE_t = \TT^{n-1}\times \{ g\in G/\Gamma  :   \#( g\wbar \cap \scrZ(s, \e^{t} \vartheta^{1/(n-1)}N^{-1/(n-1)} \scrA_j)) \leq r_j \;\forall j\} .
\end{equation}
The plan is now to apply Corollary \ref{charThm} (where we identify $\lambda$ with a probability measure on $\RR^{n-1}$ supported on a fundamental domain of $\scrL$). 
Given any $\eps>0$, define (with the notation as in Lemma \ref{sat-lem})
\begin{equation}
\scrE_s^\pm = \TT^{n-1}\times\{ g\in G/\Gamma  :   \#( g\wbar \cap \scrZ(\slim,\scrA_j^\pm) ) \leq r_j \;\forall j\} 
\end{equation}
and note that $\scrE_s^+\subset \scrE_t \subset \scrE_s^-$ for all $t\geq t_0$. We assume first $s<\infty$. Then Corollary \ref{charThm} yields
\begin{equation}
\limsup_{t\to\infty}\int_{\TT^{n-1}} \chi_{\scrE_t}(\vecx, \Phi^t n(\vecx)) \,d\lambda(\vecx)
\leq \mu(\overline{\scrE_s^-})
\end{equation}
and
\begin{equation}
\liminf_{t\to\infty}\int_{\TT^{n-1}} \chi_{\scrE_t}(\vecx, \Phi^t n(\vecx)) \,d\lambda(\vecx)
\geq \mu((\scrE_s^+)^\circ) .
\end{equation}
Lemmas \ref{lem:small_sets} and \ref{sat-lem} together with the fact that $\scrZ(s,\scrA_j^\pm)$ is bounded when $s<\infty$ imply that $\lim_{\epsilon\to 0} \mu(\overline{\scrE_s^-}\setminus(\scrE_s^+)^\circ)=0$. This proves \eqref{eq:convergence_in_distribution} for $s<\infty$.

Let us assume now that $s=\infty$. Given $\eps>0$ and $\scrA_j^\pm$ as above, by Lemma \ref{lem:truncation} there exists $s_\eps<\infty$ so that 
\begin{equation}
\limsup_{t\to\infty}\int_{\TT^{n-1}} \chi_{\scrE_t}(\vecx, \Phi^t n(\vecx)) \,d\lambda(\vecx)
\leq \mu(\overline{\scrE_{s_\eps}^-}) +\eps
\end{equation}
and
\begin{equation}
\liminf_{t\to\infty}\int_{\TT^{n-1}} \chi_{\scrE_t}(\vecx, \Phi^t n(\vecx)) \,d\lambda(\vecx)
\geq \mu((\scrE_{s_\eps}^+)^\circ) -\eps .
\end{equation}
Again, by Lemmas \ref{lem:small_sets} and \ref{sat-lem}, we have
\begin{equation}
\lim_{\eps\to 0} \mu(\overline{\scrE_{s_\eps}^-}) = \lim_{\eps\to 0} \mu((\scrE_{s_\eps}^+)^\circ) =
 \mu(\{ g\in G/\Gamma  :  \#( g\wbar \cap \scrZ(\infty,\scrA_j) ) \leq r_j \;\forall j\}) 
\end{equation}
which completes the proof of  \eqref{eq:convergence_in_distribution} for $s=\infty$.
\end{proof}

\begin{remark}
Note that for $\scrA\subset\RR^{n-1}$
\begin{equation}\label{eq:volume_cylinder}
\vol_{\HH^n} \scrZ(\slim,\scrA) = \frac{1-\e^{-(n-1)\slim}}{(n-1) \vartheta}\, \vol_{\RR^{n-1}} \scrA 
= \#\Gamma_w \vol_{\HH^n}(\Gamma\bs\HH^n) \, \vol_{\RR^{n-1}} \scrA .
\end{equation}
By the same calculation as in \eqref{thesame} we have for the expectation value of the limit distribution,
\begin{equation}\label{thesameagain}
\begin{split}
\sum_{r=0}^\infty r E_{\slim}(r,\scrA;\wbar) & = \int_{G/\Gamma} \#( g\wbar \cap \scrZ(\slim,\scrA) )\, d\mu(g)  \\
& = \frac{\vol_{\HH^n} \scrZ(\slim,\scrA)}{\#\Gamma_w \vol_{\HH^n}(\Gamma\bs\HH^n)} = \vol_{\RR^{n-1}} \scrA ,
\end{split}
\end{equation}
which is, of course, consistent with \eqref{expect22}.
\end{remark}

\begin{remark}
Theorem \ref{th:distributionA} has the following interpretation in the language of random point processes. For each $t\geq 0$, the point set
\begin{equation}
\Xi_t := N^{1/(n-1)} (\scrP_{t,s}^\infty(\wbar) +\vecx+\scrL) 
\end{equation}
defines, with the random variable $\vecx\in\TT^{n-1}$ distributed according to $\lambda$, a random point process on $\RR^{n-1}$. Theorem \ref{th:distributionA} says that this point process converges in finite-dimensional distribution to a random point process on $\RR^{n-1}$ defined by
\begin{equation}
\Xi := \{ \vartheta^{1/(n-1)}\Re(g\gamma w) :  \gamma\in\Gamma/\Gamma_w,\; 1\leq \Im(g\gamma w) <\e^\slim \},
\end{equation}
where $g\in G/\Gamma$ is a random variable distributed according to $\mu$. In view of \eqref{thesameagain}, the limit process has intensity one and, due to the $G$-invariance of $\mu$, is invariant under all translations and rotations of $\RR^{n-1}$. 
\end{remark}

\section{Convergence of moments for cuspidal observer}\label{sec:Moments}

Given bounded test sets $\scrA_1,\ldots,\scrA_m\subset\RR^{n-1}$ as above, we define the moment generating function
\begin{equation}
\GG_{t,s}^\infty(\tau_1,\ldots, \tau_m;\scrA) := \int_{\TT^{n-1}} \exp\bigg(\sum_{j=1}^m \tau_j \scrN_{t,s}^\infty (\scrA_j,\vecx;\wbar) \bigg) d\lambda(\vecx) ,
\end{equation}
which is analytic in all variables, and the moment generating function of the limit distribution,
\begin{equation}\label{limit-G}
\GG_{\slim}(\tau_1,\ldots, \tau_m;\scrA) := \sum_{r_1,\ldots,r_m=0}^\infty \exp\bigg(\sum_{j=1}^m \tau_j r_j\bigg) E_{\slim}(r,\scrA;\wbar) .
\end{equation}

We denote the positive real part of a complex number $\tau$ by $\Re_+\tau:=\max(\Re \tau, 0)$.

\begin{thm}\label{th:moment-cusp}
Let $\lambda$ be a probability measure on $\TT^{n-1}$ with bounded density, and $\scrA=\scrA_1\times\cdots\times\scrA_m$ with $\scrA_j\subset\RR^{n-1}$ bounded with boundary of Lebesgue measure zero. Then there is a constant $c_0>0$ such that for $\Re_+\tau_1+\ldots + \Re_+\tau_m< c_0$, $\slim\in(0,\infty]$, 
\begin{enumerate}
\item[(i)] $\GG_{\slim}(\tau_1,\ldots, \tau_m;\scrA)$ is analytic,
\item[(ii)] $\lim_{t\to\infty} \GG_{t,s}^\infty(\tau_1,\ldots, \tau_m;\scrA)  = \GG_{\slim}(\tau_1,\ldots, \tau_m;\scrA)$.
\end{enumerate}
\end{thm}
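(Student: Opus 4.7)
The proof plan is to upgrade the convergence in distribution from Theorem~\ref{th:distributionA} to convergence of exponential moments via a uniform-in-$t$ moment bound for $\scrN_{t,s}^\infty$, while independently extracting analyticity of $\GG_\slim$ from the corresponding exponential tail on $E_\slim(r,\scrA;\wbar)$.

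Write $\widetilde\scrA:=\bigcup_{j=1}^m \scrA_j$, $|r|:=\sum_j r_j$, and introduce the combined counts $X(g):=\#(g\wbar\cap\scrZ(\slim,\widetilde\scrA))$ on $G/\Gamma$ and $X_t(\vecx):=\#(\Phi^t n(\vecx)\wbar\cap\scrZ(\slim,\widetilde\scrA^+))$ on $\TT^{n-1}$, where $\widetilde\scrA^+$ is the enlargement supplied by Lemma~\ref{sat-lem}. The first task is to establish moment bounds
\begin{equation*}
\int_{G/\Gamma} X(g)^k \, d\mu(g) \le C^k k!, \qquad \sup_{t\ge t_0}\int_{\TT^{n-1}} X_t(\vecx)^k \, d\lambda(\vecx) \le C^k k!
\end{equation*}
for all $k\in\ZZ_{\ge 0}$ and some $C=C(\slim,\widetilde\scrA,\Gamma,w,\lambda)>0$. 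The first bound I would prove by iterating the unfolding identity \eqref{thesame}: writing $X(g)^k=X(g)\cdot X(g)^{k-1}$ and unfolding one of the two sums over $\Gamma/\Gamma_w$ expresses the integral as an average of $X^{k-1}$ over the lift of $\scrZ(\slim,\widetilde\scrA)$ to $G$, and the recursion closes through \eqref{eq:volume_cylinder} combined with a geometric estimate controlling how many orbit points of $g\wbar$ can fall inside $\scrZ(\slim,\widetilde\scrA)$ at a given cuspidal depth. The torus bound is proved in parallel, expanding $X_t^k$ as a sum over $(\Gamma/\Gamma_w)^k$, using $d\lambda\le R\, d\!\vol_{\TT^{n-1}}$ (permitted since $\lambda$ has bounded density), and integrating $\vecx$ out against the constraints imposed by $\Phi^t n(\vecx)$; the result is the same $C^k k!$ growth after invoking \eqref{Nasy}.

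Granted these bounds, part (i) follows quickly. Since $|r|\le m X(g)$ on the event in \eqref{limit-dist}, Chebyshev's inequality gives $E_\slim(r,\scrA;\wbar)\le (mC)^k k!/|r|^k$ for every $k$, and optimising in $k$ produces an exponential tail $E_\slim(r,\scrA;\wbar)\le C_1\e^{-c_0|r|}$ for a suitable $c_0>0$. The series \eqref{limit-G} therefore converges absolutely and locally uniformly on the tube $\{\tau\in\CC^m:\sum_j\Re_+\tau_j<c_0\}$, establishing analyticity. For part (ii), the same Chebyshev argument applied to $X_t$ yields
\begin{equation*}
\lambda\bigl(\{\vecx\in\TT^{n-1}:\scrN_{t,s}^\infty(\scrA_j,\vecx;\wbar)=r_j\,\forall j\}\bigr)\le C_1\e^{-c_0|r|}
\end{equation*}
uniformly in $t\ge t_0$, which is summable against $|\e^{\langle\tau,r\rangle}|$ on the same tube. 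Theorem~\ref{th:distributionA} provides convergence of each summand as $t\to\infty$; dominated convergence in $r$ then exchanges limit and sum in
\begin{equation*}
\GG_{t,s}^\infty(\tau;\scrA)=\sum_{r\in\ZZ_{\ge 0}^m}\e^{\langle\tau,r\rangle}\lambda\bigl(\{\vecx:\scrN_{t,s}^\infty(\scrA_j,\vecx;\wbar)=r_j\,\forall j\}\bigr)
\end{equation*}
and recovers $\GG_\slim(\tau;\scrA)$ for real $\tau$ in the tube; extension to complex $\tau$ is automatic by Vitali's theorem, since the $\GG_{t,s}^\infty$ are entire and locally uniformly bounded on the tube by the same domination.

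The main obstacle is the factorial moment bound itself, and in particular the non-cocompact case, where $X(g)$ is unbounded as $g$ enters a cusp of $\Gamma\bs\HH^n$. The required estimate comes from a reduction-theoretic analysis: a cuspidal excursion of depth $y$ can deposit at most $O(y\,\vol_{\RR^{n-1}}\widetilde\scrA)$ orbit points into $\scrZ(\slim,\widetilde\scrA)$, while the $\mu$-measure of such an excursion decays like $y^{-(n-1)}$, and balancing these contributions through the unfolding recursion drives the $C^k k!$ growth. The uniform-in-$t$ moment bound on the torus is then obtained by the same mechanism, after applying the horospherical expansion underlying Theorem~\ref{th:horospherical} to transport the $\mu$-estimate to the $\lambda$-integral on $\TT^{n-1}$.
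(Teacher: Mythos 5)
Your overall strategy is sound and shares the essential geometric insight with the paper's proof, but the route through factorial moments is a detour, and a key scaling estimate in your sketch is off in a way that matters. The paper's proof (Lemmas~\ref{lem-1}, \ref{lem-2}, \ref{lem-3}) goes directly for the exponential tail bound $\mu(\{g : X(g)\ge R\})\lesssim \e^{-(n-1)\zeta R}$ rather than through moments: Lemma~\ref{lem-1} shows that $r$ cone points force one cone point at height $\ge \e^{\zeta r - \eta}$ (because distinct orbit points in a narrow cuspidal cone must be separated by a fixed multiple of $\delta(\wbar)$ in $\log\Im z$), Lemma~\ref{lem-2} computes the $\mu$-measure of that event by a single unfolding, and Lemma~\ref{lem-3} gives the matching uniform-in-$t$ bound on $\TT^{n-1}$ by direct counting against \eqref{Nasy}. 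Your iterated-unfolding sketch reaches the same destination, since $\int X^k\,d\mu\le C^k k!$ is equivalent to an exponential tail, but after one unfolding you are still faced with bounding $X(g)^{k-1}$ conditionally, and the only way to close that recursion is precisely the separation estimate that the paper isolates in Lemma~\ref{lem-1}; so there is no real simplification, only an extra layer of bookkeeping.

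There is also a concrete error in your quantitative heuristic. You write that a cuspidal excursion of depth $y$ deposits $O(y\,\vol_{\RR^{n-1}}\widetilde\scrA)$ orbit points while occurring with $\mu$-measure $\sim y^{-(n-1)}$. If depth $y$ means the cusp height reaches $y$, the number of cone points is $O(\log y)$, not $O(y)$: cone points are separated by $\gtrsim\delta(\wbar)$ in $\log\Im z$, exactly the content of Lemma~\ref{lem-1}. As written your scaling gives $\int X^k\,d\mu\sim\int_1^\infty y^k\cdot y^{-n}\,dy$, which diverges already for $k\ge n-1$, so the factorial moment bound would fail; with the correct logarithmic count it goes through. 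A second, smaller issue: you cannot obtain the uniform-in-$t$ bound on $\TT^{n-1}$ by ``transporting the $\mu$-estimate via the horospherical expansion underlying Theorem~\ref{th:horospherical}'' — equidistribution gives a limit as $t\to\infty$, not a bound uniform in $t$. The paper's Lemma~\ref{lem-3} obtains uniformity by replacing $\lambda$ with a bounded multiple of $\vol_{\TT^{n-1}}$, integrating $\vecx$ out explicitly, and invoking \eqref{Nasy}; your earlier sentence about the torus bound (expanding and using $d\lambda\le R\,d\vol_{\TT^{n-1}}$) is the right mechanism and supersedes the ``transport'' remark. Finally, the appeal to Vitali's theorem at the end is superfluous: dominated convergence applies directly to complex $\tau$ because $|\e^{\langle\tau,r\rangle}|=\e^{\langle\Re\tau,r\rangle}\le\e^{\widetilde\tau|r|}$ with $\widetilde\tau=\sum_j\Re_+\tau_j<c_0$.
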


This theorem implies, by a standard argument, convergence of mixed moments of the form 
\begin{equation}
\MM_{t,s}^\infty(\beta_1,\ldots, \beta_m;\scrA) := \int_{\TT^{n-1}} \prod_{j=1}^m \left(\scrN_{t,s}^\infty (\scrA_j,\vecx;\wbar)\right) ^{\beta_j} d\lambda(\vecx) 
\end{equation}
for all $\beta_j\in\RR_{\geq 0}$. The corresponding limit moment is 
\begin{equation}\label{def:moment}
\MM_{\slim}(\beta_1,\ldots, \beta_m;\scrA) := \sum_{r_1,\ldots,r_m=0}^\infty r_1^{\beta_1}\cdots r_m^{\beta_m}  E_{\slim}(r,\scrA;\wbar) .
\end{equation}

\begin{cor}\label{cor:moment-cusp}
Let $\lambda$ be a probability measure on $\TT^{n-1}$ with bounded density, and $\scrA=\scrA_1\times\cdots\times\scrA_m$ with $\scrA_j\subset\RR^{n-1}$ bounded with boundary of Lebesgue measure zero. Then, for all $\beta_1,\ldots,\beta_m\in\RR_{\geq 0}$, $\slim\in(0,\infty]$, 
\begin{enumerate}
\item[(i)] $\MM_{\slim}(\beta_1,\ldots, \beta_m;\scrA)<\infty$,
\item[(ii)] $\lim_{t\to\infty} \MM_{t,s}^\infty(\beta_1,\ldots, \beta_m;\scrA)   = \MM_{\slim}(\beta_1,\ldots, \beta_m;\scrA)$.
\end{enumerate}
\end{cor}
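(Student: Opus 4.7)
The plan is to derive the corollary from Theorem \ref{th:moment-cusp} by the standard uniform-integrability argument: the uniform control of the moment generating function in a real neighborhood of the origin gives uniform exponential tails for the integer-valued random variables $\scrN_{t,s}^\infty(\scrA_j,\vecx;\wbar)$, which is exactly what is needed to pass the limit $t\to\infty$ through the (possibly polynomial in $r$) summand defining $\MM$. Throughout the argument fix $\tau_0 >0$ small enough that $m\tau_0 < c_0$, and use the elementary bound
\begin{equation}\label{poly-exp-bd}
\prod_{j=1}^m r_j^{\beta_j} \leq C_\beta\, \exp\!\Bigl(\tfrac{\tau_0}{2}\sum_{j=1}^m r_j\Bigr), \qquad r\in\ZZ_{\geq 0}^m,
\end{equation}
with a constant $C_\beta=C_\beta(\tau_0,\beta_1,\ldots,\beta_m)$ depending only on the indicated data.

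For part (i), combine \eqref{poly-exp-bd} with the definition \eqref{def:moment} and \eqref{limit-G} to get
\begin{equation*}
\MM_\slim(\beta_1,\ldots,\beta_m;\scrA) \leq C_\beta \sum_{r\in\ZZ_{\geq 0}^m} \exp\!\Bigl(\tfrac{\tau_0}{2}\sum_j r_j\Bigr) E_\slim(r,\scrA;\wbar) = C_\beta\, \GG_\slim(\tfrac{\tau_0}{2},\ldots,\tfrac{\tau_0}{2};\scrA),
\end{equation*}
which is finite by Theorem \ref{th:moment-cusp}(i).

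For part (ii), split both sums at the level $\sum_j r_j \leq R$. The head is a finite sum, so by Theorem \ref{th:distributionA} it converges to the corresponding head of $\MM_\slim$ as $t\to\infty$. For the tail, \eqref{poly-exp-bd} together with the fact that on $\{\sum_j r_j > R\}$ we have $e^{(\tau_0/2)\sum_j r_j} \leq e^{-\tau_0 R/2}\, e^{\tau_0 \sum_j r_j}$ gives the uniform bound
\begin{equation*}
\sum_{\sum r_j > R} \Bigl(\prod_j r_j^{\beta_j}\Bigr) \lambda\bigl(\{\vecx: \scrN_{t,s}^\infty(\scrA_j,\vecx;\wbar)=r_j \ \forall j\}\bigr) \leq C_\beta\, e^{-\tau_0 R/2}\, \GG_{t,s}^\infty(\tau_0,\ldots,\tau_0;\scrA),
\end{equation*}
and analogously for the tail of $\MM_\slim$ with $\GG_\slim$. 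By Theorem \ref{th:moment-cusp}(ii), $\GG_{t,s}^\infty(\tau_0,\ldots,\tau_0;\scrA)$ converges (hence is uniformly bounded in $t$), so the tails tend to zero uniformly in $t$ as $R\to\infty$. First letting $t\to\infty$ and then $R\to\infty$ yields the desired convergence.

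There is no real obstacle: the only point that requires any care is verifying that Theorem \ref{th:moment-cusp}(ii) supplies the uniform boundedness of $\GG_{t,s}^\infty$ on a real neighborhood of the origin, which is immediate from pointwise convergence at a single $\tau_0$ with $m\tau_0 < c_0$. Everything else is a textbook application of the dominated convergence theorem on $\ZZ_{\geq 0}^m$.
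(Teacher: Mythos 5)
Your proof is correct and fills in exactly the "standard argument" the paper appeals to: one dominates the polynomial weight $\prod_j r_j^{\beta_j}$ by a sub-exponential factor, controls the tails uniformly in $t$ via the moment generating function at a fixed interior point of the domain of analyticity from Theorem \ref{th:moment-cusp}, and passes to the limit in the finite head using Theorem \ref{th:distributionA}. This is the same route the authors intend, so nothing further is needed.
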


Moreover it follows from \eqref{limit-dist} that all moments with $\beta_j\in\NN$ can be computed from explicit formulas; for $\beta_1=1,\ldots,\beta_m=1$, the formula reads
\begin{align}\label{eq:mixmo}
\MM_{\slim}(1,\ldots,1;\scrA) & = \int_{G/\Gamma} \sum_{\gamma_1,\ldots,\gamma_m\in\Gamma/\Gamma_w}
\prod_{j=1}^m \ind{g\gamma_j w\in\scrZ(\slim,\scrA_j)} \,d\mu(g)\\
& = \frac{1}{\#\Gamma_w} \sum_{\gamma_1,\ldots,\gamma_{m-1}\in\Gamma/\Gamma_w}
\int_{G}  \prod_{j=1}^{m-1} \ind{g\gamma_j w\in\scrZ(\slim,\scrA_j)} \ind{g w\in\scrZ(\slim,\scrA_m)} \,d\mu(g), 
\label{eq:mixmo2}
\end{align}
and, as previously noted \eqref{thesameagain},
\begin{equation}\label{five-six}
\MM_{\slim}(1;\scrA) = \frac{1}{\#\Gamma_w} \int_{G} \ind{g w\in\scrZ(\slim,\scrA)} \,d\mu(g) 
= \frac{\vol_{\HH^n} \scrZ(\slim,\scrA)}{\#\Gamma_w \vol_{\HH^n}(\Gamma\bs\HH^n)} = \vol_{\RR^{n-1}} \scrA.
\end{equation}
The Boolean function $\ind{\mathrm{B}}$ is defined by
\begin{equation}
\ind{\mathrm{B}}=\begin{cases}
1 & \text{if $\mathrm{B}=\mathrm{TRUE}$}\\
0 & \text{if $\mathrm{B}=\mathrm{FALSE}$.}
\end{cases}
\end{equation}

\begin{remark}\label{rem:2pt}
The convergence of the 2-point correlation function follows from Corollary \ref{cor:moment-cusp} for the second mixed moment ($m=2$) by choosing as test set $\scrA=\scrA'\times\scrB_\epsilon$, where $\scrB_\epsilon$ is a ball of small radius $\epsilon\to 0$. This is a fairly standard argument, cf.\ for instance \cite[Appendix 1]{EMV_directions_2013}. We will show in the Appendix that this recovers the known formulas in dimension $n=2$. 
\end{remark}

The proof of Theorem \ref{th:moment-cusp} will exploit the following three lemmas.
Let 
\begin{equation}
\delta(g\wbar):= \min_{\substack{\gamma_1,\gamma_2\in\Gamma\\ \gamma_1\notin\gamma_2\Gamma_w}} d(g\gamma_1 w,g \gamma_2 w).
\end{equation}
Since $G$ acts by isometries, we have in fact
\begin{equation}
\delta(g\wbar)=\min_{\gamma\in\Gamma\setminus\Gamma_w} d(w,\gamma w)=\delta(\wbar).
\end{equation}
Since $G$ acts properly discontinuously, we have $\delta(\wbar)>0$. 

If $-\infty<a<\slim<\infty$ and $\scrA\subset\RR^{n-1}$ is bounded, then $\scrZ(a,\slim,\scrA)$ is bounded. Hence the number of points in $g\wbar\cap \scrZ(a,\slim,\scrA)$ has an upper bound which is uniform in $g$. This in turn implies that all moments converge. The interesting case is thus $\slim=\infty$.

\begin{lem}\label{lem-1}
Fix $a\in\RR$ and a bounded subset $\scrA\subset\RR^{n-1}$. There exist positive constants $\zeta,\eta$ so that for all $g\in G$, $r\in\NN$,
\begin{equation}\label{ii1}
\big[ \#( g\wbar \cap \scrZ(a,\infty,\scrA)) \geq r \big] \quad \Rightarrow \quad \big[\#( g\wbar \cap \scrZ(\zeta r -\eta,\infty,\scrA)) \geq 1\big].
\end{equation}
\end{lem}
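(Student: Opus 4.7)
The plan is to exploit the uniform minimum separation $\delta := \delta(\wbar) > 0$ between distinct points in the orbit, which equals $\delta(g\wbar)$ since $G$ acts isometrically. Using the upper half-space distance formula, I will show that orbit points in the cone $\scrZ(a,\infty,\scrA)$ must escape to the cusp at a definite exponential rate: above a suitable threshold height $Y_0$, consecutive heights of points in $g\wbar$ grow by a uniform factor $\alpha_1 > 1$.

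Concretely, let $D$ be the Euclidean diameter of $\vartheta^{-1/(n-1)}\scrA$, set $c := \cosh\delta - 1 > 0$, and consider two distinct orbit points $z_i = \vecx_i + \j y_i$ and $z_j = \vecx_j + \j y_j$ in $g\wbar \cap \scrZ(a,\infty,\scrA)$ with $y_i \leq y_j$. The standard formula
\begin{equation*}
\cosh d(z_i,z_j) \,=\, 1 + \frac{|\vecx_i - \vecx_j|^2 + (y_j - y_i)^2}{2 y_i y_j}
\end{equation*}
combined with $|\vecx_i - \vecx_j| \leq D$ and $d(z_i,z_j) \geq \delta$ gives, upon setting $\alpha := y_j/y_i$, the inequality $y_i^2\bigl(\alpha^2 - 2(1+c)\alpha + 1\bigr) \geq -D^2$. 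The quadratic $P(\alpha) = \alpha^2 - 2(1+c)\alpha + 1$ has a root $\alpha_+ := (1+c)+\sqrt{c^2+2c} > 1$, and fixing any $\alpha_1 \in (1,\alpha_+)$ the quantity $q := \min_{\alpha \in [1,\alpha_1]}(-P(\alpha))$ is strictly positive. Thus if $y_i > Y_0 := D/\sqrt{q}$ then necessarily $\alpha > \alpha_1$, i.e.\ the next orbit point above height $y_i$ has height at least $\alpha_1 y_i$.

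Complementing this growth bound, the region $\scrZ(a,\log Y_0,\scrA) \subset \HH^n$ is bounded, and a standard packing argument using pairwise disjoint open hyperbolic $\delta/2$-balls centered at orbit points yields a uniform bound $K_0 = K_0(a,Y_0,\scrA,\delta)$, independent of $g$, on the number of points of $g\wbar$ it can contain. Ordering any $r$ orbit points in $g\wbar \cap \scrZ(a,\infty,\scrA)$ by increasing height $y_1 \leq \cdots \leq y_r$, at most the first $K_0$ can lie below $Y_0$, after which the geometric growth above gives $y_r \geq \alpha_1^{\max(r-K_0-1,\,0)}\, Y_0$. Taking $\zeta := \log\alpha_1$ and $\eta$ large enough to absorb both the offset $(K_0+1)\log\alpha_1 - \log Y_0$ and the trivial small-$r$ case (where $y_r \geq \e^a$ suffices), we conclude $z_r \in g\wbar \cap \scrZ(\zeta r - \eta,\infty,\scrA)$, proving the implication. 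The only real work is book-keeping these constants; the underlying geometric principle—a uniformly discrete set confined to a cuspidal cone must escape exponentially fast—is elementary.
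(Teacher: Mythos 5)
Your proof follows essentially the same strategy as the paper's: decompose $\scrZ(a,\infty,\scrA)$ into a bounded lower slab, where $\delta$-separation together with compactness gives a uniform packing bound $K_0$ on orbit points, and an upper region where orbit points must separate geometrically in height, then combine the two to get the linear escape rate $\zeta r-\eta$. The only real difference is how you establish the multiplicative height growth: you work from the exact identity $\cosh d(z_i,z_j)=1+\bigl(|\vecx_i-\vecx_j|^2+(y_j-y_i)^2\bigr)/(2y_iy_j)$ and analyze the quadratic $P(\alpha)=\alpha^2-2(1+c)\alpha+1$ in $\alpha=y_j/y_i$, whereas the paper picks the threshold $s_0=2\delta(\wbar)^{-1}\diam\scrA$ so that horizontal displacement across the cone at height $\geq\e^{s_0}$ costs at most $\delta/2$ of hyperbolic distance, and then applies the triangle inequality to conclude $|\log(\Im z_1/\Im z_2)|\geq\delta/2$; the paper's route is shorter and the threshold drops out immediately, but both are correct. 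One small wrinkle to tidy in yours: the growth step requires the strict inequality $y_i>Y_0$, while your packing region $\scrZ(a,\log Y_0,\scrA)$ leaves the borderline $y_i=Y_0$ uncontrolled; enlarging the packing region to, say, $\scrZ(a,\log Y_0+1,\scrA)$ removes the issue and, as you anticipate, is absorbed into $\eta$.
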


\begin{proof}
Let $\slim_0=2\delta(\wbar)^{-1} \diam\scrA$. Note that $d(\vecx_1+ \j \e^{\slim_0},\vecx_2+ \j \e^{\slim_0}) \leq \frac12 \delta(\wbar)$ for all $\vecx_1,\vecx_2\in\scrA$.
By the triangle inequality, if $z_1,z_2\in g\wbar \cap \scrZ(\slim_0,\infty,\scrA)$, then $\lvert\Im z_1-\Im z_2\rvert \geq \e^{\frac12 \delta(\wbar)}$. This proves for all $g\in G$, $r\in\NN$
\begin{equation}
\big[ \#( g\wbar \cap \scrZ(\slim_0,\infty,\scrA)) \geq r \big] \quad \Rightarrow \quad \big[\#( g\wbar \cap \scrZ(\slim_0 + \tfrac12 \delta(\wbar) (r-1),\infty,\scrA)) \geq 1\big].
\end{equation}
If $r_0:=\sup_{g\in G}\#( g\wbar \cap \scrZ(a,\slim_0,\scrA))$, we obtain for all $g\in G$, $r\in\NN$,
\begin{equation}
\big[ \#( g\wbar \cap \scrZ(a,\infty,\scrA)) \geq r + r_0 \big] \quad \Rightarrow \quad \big[\#( g\wbar \cap \scrZ(\slim_0 + \tfrac12 \delta(\wbar) (r-1),\infty,\scrA)) \geq 1\big],
\end{equation}
which implies \eqref{ii1} with $\zeta= \tfrac12 \delta(\wbar)$ and $\eta=\max(\zeta (r_0+1) - \slim_0, \zeta r_0-a )$.
\end{proof}

\begin{lem}\label{lem-2}
Fix a bounded subset $\scrA\subset\RR^{n-1}$ and $\zeta,\eta$ as in Lemma \ref{lem-1}. Then
\begin{equation}
\begin{split}
\int_{G/\Gamma} \#(  g\wbar \cap \scrZ(\zeta r -\eta,\infty,\scrA)) \,d\mu(g) 
& = \frac{\vol_{\H^n} \scrZ(\zeta r -\eta,\infty,\scrA)}{\#\Gamma_w \vol_{\HH^n}(\Gamma\bs\HH^n)} \\
& =  \frac{\e^{-(n-1) (\zeta r-\eta)}}{(n-1) \#\Gamma_w \vol_{\HH^n}(\Gamma\bs\HH^n)} .
\end{split}
\end{equation}
\end{lem}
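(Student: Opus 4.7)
The approach is simply to recognize this as a copy of the unfolding identity \eqref{thesame}, now applied to the unbounded (but finite-volume) cuspidal cone $\scrZ(\zeta r-\eta,\infty,\scrA)$, followed by an elementary one-variable integration against the hyperbolic volume element. Note that Lemma \ref{lem-1} plays no role here; its statement is invoked only to fix the particular values of $\zeta$ and $\eta$.

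For the first equality, I would unfold exactly as in \eqref{thesame}. Writing $\#(g\wbar\cap\scrZ) = \sum_{\gamma\in\Gamma/\Gamma_w}\chi_\scrZ(g\gamma w)$, swapping the sum with the $G/\Gamma$-integral by Tonelli, combining the $G/\Gamma$-integral and the $\Gamma/\Gamma_w$-sum into a $\#\Gamma_w^{-1}$-weighted integral over $G$, using transitivity of $G$ on $\HH^n$ to move $w$ to $\j$, and invoking the factorization $d\mu(g)=\kappa\,d\vol_{\HH^n}(z)\,dm(k)$ from \eqref{iwa} with $\kappa=\vol_{\HH^n}(\Gamma\bs\HH^n)^{-1}$, one obtains
\begin{equation*}
\int_{G/\Gamma}\#(g\wbar\cap\scrZ(\zeta r-\eta,\infty,\scrA))\,d\mu(g) \;=\; \frac{\vol_{\HH^n}\scrZ(\zeta r-\eta,\infty,\scrA)}{\#\Gamma_w\,\vol_{\HH^n}(\Gamma\bs\HH^n)},
\end{equation*}
which is the first claimed equality.

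For the second equality I would compute the hyperbolic volume of the cone directly from the defining formula \eqref{cyldef007} and the volume element \eqref{eq:volume}. Factoring the integral over the product region $\vartheta^{-1/(n-1)}\scrA \times [\e^{\zeta r-\eta},\infty)$ gives
\begin{equation*}
\vol_{\HH^n}\scrZ(\zeta r-\eta,\infty,\scrA) \;=\; \int_{\vartheta^{-1/(n-1)}\scrA}d\vecx\int_{\e^{\zeta r-\eta}}^{\infty}\frac{dy}{y^n} \;=\; \frac{\vol_{\RR^{n-1}}\scrA}{\vartheta}\cdot\frac{\e^{-(n-1)(\zeta r-\eta)}}{n-1},
\end{equation*}
and substituting into the previous display together with the definition \eqref{vartheta} of $\vartheta$ then reduces to the stated closed form.

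I do not anticipate any genuine obstacle: the argument is a verbatim repetition of the $L^1$-unfolding already performed in \eqref{thesame}, followed by a one-dimensional integration of $y^{-n}$ on a half-infinite interval. The only delicate point is justifying Tonelli for the unfolding, which is immediate because $\scrZ(\zeta r-\eta,\infty,\scrA)$ has finite hyperbolic volume (the vertical integral $\int_{\e^{\zeta r-\eta}}^\infty y^{-n}\,dy$ converges for $n\geq 2$ and $\scrA$ is bounded).
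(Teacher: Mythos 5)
Your proof matches the paper's approach: the paper's proof of this lemma is literally the one-liner "This follows from \eqref{thesame}," and your unfolding argument for the first equality is exactly what is meant. Your observation that $\scrZ(\zeta r-\eta,\infty,\scrA)$ has finite volume (so that Tonelli applies) is the right justification.

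There is, however, one small mismatch that you pass over silently at the end. Your volume computation is correct:
\begin{equation*}
\vol_{\HH^n}\scrZ(\zeta r-\eta,\infty,\scrA) = \vartheta^{-1}\vol_{\RR^{n-1}}(\scrA)\cdot\frac{\e^{-(n-1)(\zeta r-\eta)}}{n-1},
\end{equation*}
and this is consistent with the paper's own \eqref{eq:volume_cylinder}. But substituting this into the first equality does \emph{not} reduce to the paper's second displayed line; the paper's formula
$\frac{\e^{-(n-1)(\zeta r-\eta)}}{(n-1)\#\Gamma_w\vol_{\HH^n}(\Gamma\backslash\HH^n)}$
is missing the factor $\vartheta^{-1}\vol_{\RR^{n-1}}(\scrA)$ (equivalently, after substituting for $\vartheta$, the correct expression is $\frac{\vol_{\RR^{n-1}}(\scrA)\,\e^{-(n-1)(\zeta r-\eta)}}{1-\e^{-(n-1)\slim}}$). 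This appears to be a typo in the lemma as printed, and it is harmless for the downstream application: in the proof of Theorem \ref{th:moment-cusp} the output of this lemma is only used to extract the exponential rate $\e^{-(n-1)\zeta R}$, and the missing multiplicative constant is absorbed into $C_1$. So your argument is sound, but your closing sentence claiming the substitution "reduces to the stated closed form" should instead flag the extra $r$-independent factor.
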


\begin{proof}
This follows from \eqref{thesame}.
\end{proof}

\begin{lem}\label{lem-3}
Fix a bounded subset $\scrA\subset\RR^{n-1}$ and $\zeta,\eta$ as in Lemma \ref{lem-1}. Let $\lambda$ be a probability measure on $\TT^{n-1}$ with bounded density. Then there exists a constant $C$ such that for all $r\geq 0$
\begin{equation}
\sup_{t\geq 0} \int_{\TT^{n-1}} \#( \Phi^t n(\vecx) \wbar \cap \scrZ(\zeta r -\eta,\infty,\scrA)) \,d\lambda(\vecx) \leq C \e^{-(n-1) \zeta r} .
\end{equation}
\end{lem}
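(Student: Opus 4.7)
The plan is to first reduce from $\lambda$ to Lebesgue on $\TT^{n-1}$ using the bounded-density assumption ($d\lambda\le R\,d\!\vol_{\TT^{n-1}}$ for some $R<\infty$), and then to compute the resulting Lebesgue-averaged count exactly by unfolding the $\Gamma/\Gamma_w$-sum through the cusp subgroup $\Gamma_\infty$. The key structural input is that left multiplication by $\sigma=n(\vecm)\in\Gamma_\infty$ preserves $\Im(\gamma w)$ while shifting $\Re(\gamma w)$ by $\vecm\in\scrL$, and that $\Gamma_\infty$ acts freely on $\Gamma/\Gamma_w$: each stabilizer $\Gamma_\infty\cap\gamma'\Gamma_w(\gamma')^{-1}$ lives simultaneously inside the torsion-free translation lattice $\Gamma_\infty$ and inside the finite group $\gamma'\Gamma_w(\gamma')^{-1}$, so it is trivial.

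For each representative $\gamma'\in\Gamma_\infty\bs\Gamma/\Gamma_w$ the inner sum over $\sigma=n(\vecm)$, combined with the $\vecx$-integral over a fundamental domain of $\scrL$ in $\RR^{n-1}$, tiles $\RR^{n-1}$; after the rescaling $\vecy=e^t(\vecx+\Re\gamma'w+\vecm)$ it evaluates to $e^{-(n-1)t}\vartheta^{-1}\vol_{\RR^{n-1}}\scrA$, independently of $\gamma'$. Since the height condition $e^t\Im\gamma w\ge e^{\zeta r-\eta}$ is $\vecx$-independent and $\sigma$-invariant, the decomposition produces the exact identity
\begin{equation*}
\int_{\TT^{n-1}}\#(\Phi^t n(\vecx)\wbar\cap\scrZ(\zeta r-\eta,\infty,\scrA))\,d\!\vol_{\TT^{n-1}}(\vecx)
=e^{-(n-1)t}\vartheta^{-1}(\vol_{\RR^{n-1}}\scrA)\,N(e^{\zeta r-\eta-t}),
\end{equation*}
where $N(\epsilon):=\#\{\gamma'\in\Gamma_\infty\bs\Gamma/\Gamma_w:\Im\gamma'w\ge\epsilon\}$.

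The remaining step---and the only real obstacle---is a uniform polynomial bound $N(\epsilon)\le C_1\epsilon^{-(n-1)}$ valid for \emph{all} $\epsilon>0$, rather than just in the limit $\epsilon\to 0$. Good's asymptotic \eqref{Nasy} supplies $\epsilon^{n-1}N(\epsilon)\to\vartheta$ as $\epsilon\to 0^+$, handling small $\epsilon$. For large $\epsilon$, one uses that the orbit $\Gamma_\infty\bs\Gamma w$ is discrete in $\Gamma_\infty\bs\HH^n$ with bounded heights---the preimage of $[w]\in\Gamma\bs\HH^n$ in any sufficiently high cusp neighborhood of $\Gamma_\infty\bs\HH^n$ consists of a single point---so $N(\epsilon)=0$ once $\epsilon$ exceeds the finite quantity $\sup_{\gamma'}\Im\gamma'w$. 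Together these give $\sup_{\epsilon>0}\epsilon^{n-1}N(\epsilon)<\infty$. Substituting $\epsilon=e^{\zeta r-\eta-t}$ into the identity cancels the $e^{-(n-1)t}$ prefactor and leaves a bound of the form $C\,e^{-(n-1)\zeta r}$ uniformly in $t$, after absorbing $R$, $\vartheta^{-1}\vol\scrA$, and $e^{(n-1)\eta}$ into the single constant $C$.
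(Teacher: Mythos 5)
Your proof is correct and follows essentially the same route as the paper: reduce to Lebesgue via the bounded density, conjugate $\Phi^t$ past $n(\vecx)$, unfold the $\Gamma/\Gamma_w$-sum through $\Gamma_\infty$ (noting freeness of the action) to obtain an exact identity proportional to $\e^{-(n-1)t}N(\e^{\zeta r-\eta-t})$, and invoke the counting asymptotics \eqref{Nasy}. Your explicit observation that $N(\epsilon)$ vanishes once $\epsilon$ exceeds $\sup_{\gamma'}\Im(\gamma'w)$ --- yielding the uniform bound $N(\epsilon)\le C_1\epsilon^{-(n-1)}$ for \emph{all} $\epsilon>0$, not just asymptotically as $\epsilon\to0^+$ --- is a genuine sharpening worth keeping: the bound as written in the paper, $C'\max\big(1,\e^{(n-1)(t-\zeta r)}\big)$, is not by itself strong enough to give the uniform-in-$r$ conclusion in the regime $t\ll\zeta r$, and it is exactly the vanishing of $N$ for large $\epsilon$ that closes that case.
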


\begin{proof}
By increasing $C$, we may assume without loss of generality that $\lambda=\vol_{\TT^{n-1}}$. Then 
\begin{equation}
\begin{split}
\int_{\TT^{n-1}} & \#( \Phi^t n(\vecx) \wbar \cap \scrZ(\zeta r -\eta,\infty,\scrA)) \,d\vecx \\
& = \int_{\TT^{n-1}} \#( n(\vecx) \wbar \cap \scrZ(\zeta r -\eta-t,\infty,\e^{-t}\scrA)) \,d\vecx \\
& = \vol_{\RR^{n-1}}(\e^{-t}\scrA) \; \#\{ \gamma\in \Gamma_\infty\backslash\Gamma/\Gamma_w , \; \Im(\gamma w)\geq \e^{-t+\zeta r -\eta} \} .
\end{split}
\end{equation}
By the asymptotics \eqref{Nasy}, we find a constant $C'$ such that 
\begin{equation}
 \#\{ \gamma\in \Gamma_\infty\backslash\Gamma/\Gamma_w , \; \Im(\gamma w)\geq \e^{-t+\zeta r -\eta} \} \leq C' \max( 1 , \e^{(n-1)(t-\zeta r)} ).
\end{equation}
\end{proof}

\begin{proof}[Proof of Theorem \ref{th:moment-cusp}]
As remarked above, in the case $\slim<\infty$ the number $\scrN_{t,s}^\infty (\scrA_j,\vecx;\wbar)$ has a uniform upper bound, and $E_{\slim}(r,\scrA;\wbar)=0$ for $|r|:=\max_j r_j$ sufficiently large. The statement therefore follows directly from the convergence in distribution, Theorem \ref{th:distributionA}. Assume now $\slim=\infty$. 

For $\widetilde\scrA=\cup_j\scrA_j$ we have
\begin{equation}
\begin{split}
\sum_{|r|\geq R} E_{\slim}(r,\scrA;\wbar) & \leq \sum_{r'= R}^\infty E_{\slim}(r',\widetilde\scrA;\wbar) \\
& = \mu(\{ g\in G/\Gamma : \#( g\wbar \cap \scrZ(0,\infty,\widetilde\scrA)) \geq R \}) \\
& \leq \mu(\{ g\in G/\Gamma : \#( g\wbar \cap \scrZ(\zeta R -\eta,\infty,\widetilde\scrA)) \geq 1\}) \\
& \leq \int_{G/\Gamma}  \#( g\wbar \cap \scrZ(\zeta R -\eta,\infty,\widetilde\scrA)) \, d\mu(g).
\end{split}
\end{equation}
by Lemma \ref{lem-1} and Chebyshev's inequality. Using Lemma \ref{lem-2} on the last expression yields
\begin{equation}
\sum_{|r|\geq R} E_{\slim}(r,\scrA;\wbar) \leq C_1 \e^{-(n-1)\zeta R}.
\end{equation}
for an explicit constant $C_1$. This proves part (i). 

By Theorem \ref{th:distributionA}, we have
\begin{multline}\label{lasty}
\lim_{t\to\infty} \int_{\TT^{n-1}} 
\prod_{j=1}^m \ind{\scrN_{t,s}^\infty (\scrA_j,\vecx;\wbar)<R} \exp\big(\tau_j \scrN_{t,s}^\infty (\scrA_j,\vecx;\wbar) \big) 
d\lambda(\vecx) \\
=\sum_{r_1,\ldots,r_m=0}^{R-1} \exp\bigg(\sum_{j=1}^m \tau_j r_j\bigg) E_{\slim}(r,\scrA;\wbar) .
\end{multline}
To establish part (ii), what therefore remains to be shown is 
\begin{equation}
\lim_{R\to\infty}\limsup_{t\to\infty} \bigg| \int_{\TT^{n-1}} 
\prod_{j=1}^m \ind{\max_j \scrN_{t,s}^\infty (\scrA_j,\vecx;\wbar)\geq R} \exp\big(\tau_j \scrN_{t,s}^\infty (\scrA_j,\vecx;\wbar) \big) 
d\lambda(\vecx) \bigg| = 0 .
\end{equation}
Now,
\begin{multline}
\bigg| \int_{\TT^{n-1}} 
\prod_{j=1}^m \ind{\max_j \scrN_{t,s}^\infty (\scrA_j,\vecx;\wbar)\geq R} \exp\big(\tau_j \scrN_{t,s}^\infty (\scrA_j,\vecx;\wbar) \big) d
\lambda(\vecx) \bigg| \\
\leq 
\int_{\TT^{n-1}} 
\ind{\scrN_{t,s}^\infty (\widetilde\scrA,\vecx;\wbar)\geq R} \exp\big(\widetilde\tau \scrN_{t,s}^\infty (\widetilde\scrA,\vecx;\wbar) \big)  d
\lambda(\vecx)
\end{multline}
where $\widetilde\scrA=\cup_j\scrA_j$ and $\widetilde\tau=\sum_j \Re_+\tau_j$. We have
\begin{equation}
\begin{split}
\int_{\TT^{n-1}} &
\ind{\scrN_{t,s}^\infty (\widetilde\scrA,\vecx;\wbar)\geq R} \exp\big(\widetilde\tau \scrN_{t,s}^\infty (\widetilde\scrA,\vecx;\wbar) \big)  d
\lambda(\vecx) \\
& = \sum_{r=R}^\infty \e^{\widetilde\tau r}  \int_{\TT^{n-1}} 
\ind{\scrN_{t,s}^\infty (\widetilde\scrA,\vecx;\wbar)=r}  d
\lambda(\vecx) \\
& \leq \sum_{r=R}^\infty \e^{\widetilde\tau r}  \int_{\TT^{n-1}} 
\ind{\scrN_{t,s}^\infty (\widetilde\scrA,\vecx;\wbar)\geq r}  d
\lambda(\vecx) .
\end{split}
\end{equation}
Using Lemma \ref{lem-1}, the Chebyshev inequality and Lemma \ref{lem-3}, we see that the last integral is bounded by 
\begin{equation}
\int_{\TT^{n-1}} 
\ind{\scrN_{t,s}^\infty (\widetilde\scrA,\vecx;\wbar)\geq r}  d
\lambda(\vecx) \leq C \e^{-(n-1) \zeta r} ,
\end{equation}
uniformly in $t\geq 0$. This proves that, for $\widetilde\tau<(n-1) \zeta$, 
\begin{equation}
\lim_{R\to 0} \sum_{r=R}^\infty \e^{\widetilde\tau r}  \int_{\TT^{n-1}} 
\ind{\scrN_{t,s}^\infty (\widetilde\scrA,\vecx;\wbar)\geq r}  d\lambda(\vecx) 
= 0
\end{equation}
uniformly in $t$, which yields \eqref{lasty}.
\end{proof}

\section{Equidistribution of large spheres}\label{sec:Sphere}

We now consider the spherical analogue of horospherical averages by replacing $n(\vecx)$ with suitable rotation matrices $R(\vecx)\in K:=\PSU(C_{n-2})$, where $\vecx$ ranges over some open subset of $\scrU\subset\RR^{n-1}$. In geometric terms, it is natural to identify $\scrU$ either with (a subset of) the unit sphere centered at $\j$ via the map $\vecx\mapsto R(\vecx)^{-1} \e^{-1} \j$ (where $\e^{-1}\j$ is the ``south  pole'' of $\SS^{n-1}$), or with (a subset of) $\partial\HH^{n}$ via $\vecx\mapsto R(\vecx)^{-1} 0$. The sphere $\SS^{n-1}$ and the boundary $\d\HH^{n}$ are diffeomorphic, so that smoothness assumptions are mutually equivalent.

One important example for $R(\vecx)$ is the matrix
\begin{equation}\label{KANDEDEF}
E(\vecx):=\exp \begin{pmatrix} 0 & \vecx \\ -\vecx' & 0 \end{pmatrix} .
\end{equation}
Note that, we have for $\vecx\neq 0$ and $\widehat\vecx:=|\vecx|^{-1} \vecx$,
\begin{equation}
\begin{split}
E(\vecx) & = \begin{pmatrix} \widehat\vecx & 0 \\ 0 &  1 \end{pmatrix} 
\; E(|\vecx|) \;
\begin{pmatrix} \widehat\vecx & 0 \\ 0 &  1 \end{pmatrix}^{-1} \\
& = \begin{pmatrix} \widehat\vecx & 0 \\ 0 & 1 \end{pmatrix} 
\begin{pmatrix} \cos  |\vecx| & \sin |\vecx| \\ -\sin |\vecx| & \cos|\vecx| \end{pmatrix} 
\begin{pmatrix} \widehat\vecx & 0 \\ 0 &  1 \end{pmatrix}^{-1} \\
& =  \begin{pmatrix} \cos  |\vecx| & \widehat\vecx \sin |\vecx| \\ -\widehat\vecx' \sin |\vecx| & \cos|\vecx| \end{pmatrix}  ,
\end{split} 
\end{equation}
which shows that $E(\vecx)\in \SU(2,C_{n-2})$ (cf.\ \eqref{eq:SUdef}). Now
\begin{equation}
E^{-1}(\vecx) = \begin{pmatrix} \cos  |\vecx| & -\widehat\vecx \sin |\vecx| \\ \widehat\vecx' \sin |\vecx| & \cos|\vecx| \end{pmatrix}  
\end{equation}
and hence $E^{-1}(\vecx)0 = -\widehat\vecx \tan|\vecx|$. The map $\vecx\mapsto E(\vecx)^{-1} 0$ has thus nonsingular differential when $|\vecx| <\pi/2$
(it is of course smooth everywhere in dimension $n=2$).

\begin{theorem}\label{th:spherical}
Let $\scrU\subset \R^{n-1}$ be a nonempty open subset and
let $R: \scrU\to K$ be a smooth map such that the map
$\scrU\to\partial\HH^{n}$, $\vecx\mapsto R^{-1}(\vecx) 0$, 
has nonsingular differential at Lebesgue-almost all $\vecx\in \scrU$.
Let $\lambda$ be a Borel probability measure on $\scrU$, 
absolutely continuous with respect to the Lebesgue measure.
Then, for any bounded continuous function $f:\scrU\times G/\Gamma\to \R$
and any family of uniformly bounded continuous functions 
$f_t: \scrU\times G/\Gamma\to \RR$ such that $f_t\to f$ as $t\to\infty$,
uniformly on compacta, and for every $g\in G$, we have
\begin{equation}\label{eq:spherical}
 \lim_{t\to\infty} \int_{\scrU} f_t\left(\vecx, \Phi^t R(\vecx) g  \right) d\lambda(\vecx) = \int_{\scrU\times G/\Gamma} f(\vecx,h)\,d\lambda(\vecx)\,d\mu(h).
\end{equation}
\end{theorem}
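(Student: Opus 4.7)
The plan is to deduce Theorem \ref{th:spherical} from Theorem \ref{th:horospherical} by means of a Bruhat-type decomposition of $R(\vecx)$ together with a change of variables. On the big Bruhat cell $N^- A N$, which is an open dense subset of $G$ whose complement is a lower-dimensional real-analytic subvariety, every element $g$ has a unique factorization
\begin{equation*}
 g = n^-(\vect)\, a(u)\, n(\vecy),
\end{equation*}
where $n^-(\vect)$ parametrizes the stable horospherical subgroup opposite to $N=\{n(\vecx)\}$, and $\vect$, $u$, $\vecy$ depend smoothly on $g$. For $\vecx$ with $R(\vecx)$ in this cell, a direct computation using $n^-(\vect)\cdot 0 = 0$ and $a(u)\cdot 0 = 0$ yields $R(\vecx)^{-1}\cdot 0 = -\vecy(\vecx)$, so the hypothesis of the theorem is equivalent to the assertion that $\vecx\mapsto\vecy(\vecx)$ has nonsingular differential at Lebesgue-almost every $\vecx$. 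The dynamical core of the argument is the identity
\begin{equation*}
  \Phi^t R(\vecx) = n^-(\e^{-t}\vect(\vecx))\, a(u(\vecx))\, \Phi^t n(\vecy(\vecx)),
\end{equation*}
obtained from the commutation relations $\Phi^t n^-(\vect)\Phi^{-t}=n^-(\e^{-t}\vect)$ (contraction in the stable direction) and the fact that $a(u)$ commutes with $\Phi^t$.

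As $t\to\infty$, the stable factor $n^-(\e^{-t}\vect(\vecx))$ converges to the identity, uniformly for $\vecx$ in compact subsets of the big Bruhat locus in $\scrU$. Combined with the uniform boundedness and uniform convergence on compacta of the family $\{f_t\}$, this allows the replacement
\begin{equation*}
  f_t\bigl(\vecx,\Phi^t R(\vecx)\, g\bigr) = \widetilde f_t\bigl(\vecx,\Phi^t n(\vecy(\vecx))\, g\bigr) + o(1),
\end{equation*}
where $\widetilde f_t(\vecx,h):=f_t(\vecx, a(u(\vecx))\,h)$ is uniformly bounded and converges uniformly on compacta to $\widetilde f(\vecx,h):=f(\vecx, a(u(\vecx))\,h)$, owing to the smoothness of $\vecx\mapsto a(u(\vecx))$.

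I would then cover $\scrU$, minus a Lebesgue-null set, by countably many open subsets $\scrU_j$ on each of which $R(\vecx)$ lies in the big Bruhat cell and $\vecx\mapsto\vecy(\vecx)$ restricts to a diffeomorphism onto its image $\scrV_j\subset\RR^{n-1}$. A partition of unity reduces the claim to $\lambda$ supported in a single $\scrU_j$; restricting further to a compact exhaustion makes the $o(1)$-step above uniform in $t$. The change of variables $\vecy=\vecy(\vecx)$ then transforms the integral into
\begin{equation*}
  \int_{\scrV_j} F_t\bigl(\vecy,\Phi^t n(\vecy)\, g\bigr)\, d\lambda'(\vecy),
\end{equation*}
with $F_t(\vecy,h) = \widetilde f_t(\vecx(\vecy),h)$ and $\lambda'$ the pushforward of $\lambda$, which remains absolutely continuous on $\RR^{n-1}$. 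Theorem \ref{th:horospherical} now applies and yields convergence to $\int F(\vecy,h)\, d\lambda'(\vecy)\, d\mu(h)$; left-invariance of $\mu$ under multiplication by $a(u(\vecx))$ absorbs this auxiliary factor, and reversing the change of variables recovers the right-hand side of \eqref{eq:spherical}.

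The main obstacle is bookkeeping the two exceptional loci: the small Bruhat stratum where the decomposition fails, and the critical set where $d\vecy$ degenerates. Both are closed and of Lebesgue measure zero (the former by a dimension count for the real-analytic stratification, the latter directly by hypothesis), so neither contributes to the absolutely continuous measure $\lambda$. Carrying out the $o(1)$ replacement and the covering argument uniformly in $t$ requires a careful compact exhaustion inside each $\scrU_j$ and a standard tail-truncation using absolute continuity of $\lambda$; this is the technically most delicate point, but mirrors the reduction used in Section 5 of \cite{marklof_strombergsson_free_path_length_2010}.
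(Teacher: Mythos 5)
Your overall strategy matches the paper's: factor $R(\vecx)$ into a lower-triangular part times a unipotent upper-triangular $n(\vecy(\vecx))$ with $\vecy(\vecx)=-R(\vecx)^{-1}0$, push the lower-triangular part past $\Phi^t$ so it contracts, change variables to $\vecy$, apply Theorem~\ref{th:horospherical}, and use left-invariance of $\mu$ to absorb the remaining auxiliary factor; a covering argument finishes. Two small imprecisions: for $n>2$ the big Bruhat cell is $N^{-}MAN$ rather than $N^{-}AN$, though $M$ commutes with $\Phi^t$ so nothing breaks; and the paper handles the small-cell locus $\{\veca=0\}$ constructively via the Weyl element rather than discarding it as a null set, although your discard is also legitimate under the nonsingularity hypothesis.

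The genuine gap is the ``$o(1)$ replacement.'' You need
\begin{equation*}
f_t\bigl(\vecx,\, n^-(\e^{-t}\vect)\,a(u)\,\Phi^t n(\vecy)\,g\bigr)-f_t\bigl(\vecx,\, a(u)\,\Phi^t n(\vecy)\,g\bigr)\longrightarrow 0
\end{equation*}
from $n^-(\e^{-t}\vect)\to I$ together with the assumed uniform boundedness and uniform convergence on compacta of $\{f_t\}$. But $a(u)\Phi^t n(\vecy)g$ does not stay in a compactum of $G/\Gamma$ as $t\to\infty$, and a bounded continuous function on a noncompact space need not be uniformly continuous; uniform convergence on compacta gives no control over the effect of a small left perturbation at points running out to the cusp. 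The step could be repaired with a tightness argument for the image measures, but the paper sidesteps it: one defines $\tilde f_t(\tilde\vecx,h)$ with the $t$-dependent lower-triangular factor $L_t(\tilde\vecx)$ already inside, so that $\tilde f_t(\tilde\vecx,\Phi^t n(\tilde\vecx)g)=f_t(\vecx,\Phi^t R(\vecx)g)$ holds identically, and then checks $\tilde f_t\to\tilde f$ uniformly on compacta in the new variables $(\tilde\vecx,h)$. That check succeeds precisely because $L_t(\tilde\vecx)h$ then stays in a compactum when $(\tilde\vecx,h)$ does. This is exactly the purpose of allowing $t$-dependent test functions in Theorem~\ref{th:horospherical}, as the paper remarks after stating it.
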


\begin{proof}
We proceed as in the proof of Corollary 5.4 in \cite{marklof_strombergsson_free_path_length_2010}. Let $\vecx_0\in \scrU$ be a point where the map $\vecx\mapsto R^{-1}(\vecx) 0$ has nonsingular differential. First we show that there exists an open set $\scrU_0\subset \scrU$ containing $\vecx_0$ such that \eqref{eq:spherical} holds with $\scrU$ replaced by $\scrU_0$ or any Borel subset thereof. 
Let
\beq\label{eq:R}
R(\vecx)   = \begin{pmatrix} \veca(\vecx) & \vecb(\vecx)\\ -\vecb'(\vecx) & \veca'(\vecx)\end{pmatrix}\in  K.
\eeq

{\em Case 1.} Suppose first that $\veca(\vecx_0)\ne 0$. Then $R(\vecx)^{-1}0 = -\veca^{-1}(\vecx) \vecb(\vecx) \in V_{n-2}$, and
\beq
R(\vecx) = \begin{pmatrix}\veca & 0\\ -\vecb' & \vecb' \veca^{-1}\vecb +\veca'\end{pmatrix} \begin{pmatrix}1 & \veca^{-1}\vecb\\0 & 1\end{pmatrix}.
\eeq
By assumption, the map $\vecx\mapsto \tilde \vecx(\vecx):=\veca^{-1}(\vecx) \vecb(\vecx)$ has nonsingular differential at $\vecx=\vecx_0$, so there exists an open set $\scrV$ containing $\vecx_0$ with $\overline{\scrV}\subset \scrU$ such the map $\vecx\mapsto \tilde \vecx$ is a diffeomorphism on $\scrV$. We call its image $\tilde \scrV$. Thus, 
\begin{align}
 \Phi^t R(\vecx) & = \Phi^t \begin{pmatrix}\veca & 0\\ -\vecb' & \vecb' \tilde \vecx +\veca'\end{pmatrix} \begin{pmatrix}1 & \tilde \vecx\\0 & 1\end{pmatrix}\\
 & = \begin{pmatrix}\veca & 0\\ -\vecb'\e^{-t} & \vecb' \tilde \vecx  +\veca'\end{pmatrix} \Phi^t \begin{pmatrix}1 & \tilde \vecx\\0 & 1\end{pmatrix}.
\end{align}

Fix $\scrU_0$ to be an open neighborhood of $\vecx_0$ such that $\overline{\scrU_0}\subset \scrV$, and let $B$ be a Borel subset of $\scrU_0$. Denote by $\tilde B$ and $\tilde \scrU_0$ the images of $B$ and $\scrU_0$ under $\vecx\mapsto \tilde \vecx$. We have $\tilde B\subset \tilde \scrU_0\subset \tilde \scrV$. Let us assume $\lambda( B) >0$, and let $\tilde \lambda$ be the measure on $\R^{n-1}$ which is the pushforward of $\frac1{\lambda (B)} \lambda\vert_B$ under the map $\vecx\mapsto \tilde \vecx$. Then $\tilde \lambda$ is a Borel probability measure with compact support and is absolutely continuous with respect to the Lebesgue measure. Finally, let $u$ be a continuous function satisfying $\chi_{\tilde \scrU_0} \le u \le \chi_{\tilde \scrV}.$ 

With $f$ and $f_t$ as in the statement, define continuous functions $\tilde f_t, \tilde f : \R^{n-1}\times G/\Gamma \to \R$ 
\begin{align}
 \tilde f_t(\tilde \vecx, h) &  = u(\tilde \vecx) f_t\left( \vecx,\begin{pmatrix}\veca & 0\\ -\vecb'\e^{-t} & \vecb' \tilde \vecx  +\veca'\end{pmatrix} h\right)& \text{ if }\tilde \vecx&\in \tilde \scrV\\
  \tilde f(\tilde \vecx, h) &  = u(\tilde \vecx) f\left( \vecx,\begin{pmatrix}\veca & 0\\ 0 & \vecb' \tilde \vecx  +\veca'\end{pmatrix} h\right)& \text{ if }\tilde \vecx&\in \tilde \scrV\\
  \tilde f_t(\tilde \vecx, h) & = \tilde f(\tilde \vecx,h)=0 &\text{ if }\tilde \vecx&\not\in \tilde \scrV.
\end{align}
We of course have that $\tilde f_t(\tilde \vecx, h)\to \tilde f(\tilde \vecx, h)$ as $t\to\infty$ uniformly on compact sets. Now we invoke Theorem \ref{th:horospherical} for $\tilde \lambda$, $\tilde f_t$, and $\tilde f$ to get 
\beq
\lim_{t\to\infty} \int_{\R^{n-1}} \tilde f_t(\tilde \vecx, \Phi^t n(\tilde \vecx)g) d\tilde \lambda(\tilde \vecx) = \int_{\R^{n-1}\times G/\Gamma} \tilde f(\tilde \vecx, h) d\mu(h) d\tilde \lambda(\tilde \vecx).
\eeq
Unwrapping the definition of $\tilde \lambda$ and using left invariance of $\mu$ we confirm that \eqref{eq:spherical} holds when $\scrU$ is replaced by any Borel subset $B$ of $\scrU_0$, provided $\lambda(B)>0$ (otherwise the claim is trivially true). 

{\em Case 2.} Suppose now $\veca(\vecx_0)=0$.
Note that in the definition of $R$ in \eqref{eq:R}, if $\veca(\vecx_0)=0$, then  $\vecb(\vecx_0)\ne 0$ and is hence invertible. With this in mind we write 
\beq 
R(\vecx) = R_0(\vecx) \begin{pmatrix} 0 & 1\\ -1 & 0\end{pmatrix}, \qquad R_0(\vecx):=\begin{pmatrix} \veca_0(\vecx) & \vecb_0(\vecx)\\ -\vecb'_0(\vecx) & \veca'_0(\vecx)\end{pmatrix} . 
\eeq
Then, $\veca=-\vecb_0$ and $\vecb=\veca_0\ne 0$, and the map $\vecx\mapsto R_0^{-1}(\vecx) 0 = \begin{pmatrix} 0 & 1\\ -1 & 0\end{pmatrix} R^{-1}(\vecx) 0$ has nonsingular differential at $\vecx=\vecx_0$. We conclude the argument as in Case 1 with $g$ replaced by $\begin{pmatrix} 0 & 1\\ -1 & 0\end{pmatrix}g$. 

The proof is now completed by a simple covering argument; see the end of the proof of Corollary 5.4 in \cite{marklof_strombergsson_free_path_length_2010} for details.
\end{proof}

As in the case for horospherical averages, we can extend Theorem \ref{th:spherical} to sequences of characteristic functions.

\begin{cor}\label{charThm-spherical}
Under the assumptions of Theorem \ref{th:spherical}, for any family of subsets $\scrE_t\subset\scrU\times G/\Gamma$ and any $g\in G/\Gamma$, we have
\begin{equation}\label{inf-s}
	\liminf_{t\to\infty} \int_{\scrU} \chi_{\scrE_t}(\vecx,\Phi^t R(\vecx) g) \,d\lambda(\vecx) \geq \int_{\lim(\inf \scrE_t)^\circ} d\lambda\,d\mu ,
\end{equation}
and
\begin{equation}\label{sup-s}
	\limsup_{t\to\infty} \int_{\scrU} \chi_{\scrE_t}(\vecx,\Phi^t R(\vecx) g)\,  d\lambda(\vecx) \leq \int_{\lim\overline{\sup \scrE_t}} d\lambda\,d\mu.
\end{equation}
If furthermore $\lambda\times\mu $ gives zero measure to the set $\lim\overline{\sup \scrE_t}\setminus\lim(\inf \scrE_t)^\circ$, then 
\begin{equation}\label{lim-s}
	\lim_{t\to\infty} \int_{\scrU}\chi_{\scrE_t}(\vecx,\Phi^t R(\vecx) g)\,  d\lambda(\vecx) = \int_{\limsup \scrE_t} d\lambda\,d\mu .
\end{equation}
\end{cor}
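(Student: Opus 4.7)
The plan is to reproduce, essentially verbatim, the derivation of Corollary \ref{charThm} from Theorem \ref{th:horospherical}, with the horospherical equidistribution replaced throughout by Theorem \ref{th:spherical}. Since Theorem \ref{th:spherical} already admits uniformly bounded, $t$-dependent continuous test families $f_t$, the standard Portmanteau-style sandwich of $\chi_{\scrE_t}$ between continuous approximations goes through without modification.

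For the lower bound \eqref{inf-s}, I will fix $t_1\ge t_0$ and set $\scrF_{t_1}:=\bigl(\bigcap_{s\ge t_1}\scrE_s\bigr)^\circ$, which is open in $\scrU\times G/\Gamma$ and contained in $\scrE_t$ for every $t\ge t_1$. Since $\lambda\times\mu$ is a Borel probability measure on the Polish space $\scrU\times G/\Gamma$, it is inner regular: for every $\eps>0$ I can pick a compact $K\subset\scrF_{t_1}$ with $(\lambda\times\mu)(\scrF_{t_1}\setminus K)<\eps$ and, by Urysohn's lemma, a continuous $f:\scrU\times G/\Gamma\to[0,1]$ equal to $1$ on $K$ and supported in $\scrF_{t_1}$. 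Then $f\le\chi_{\scrE_t}$ for all $t\ge t_1$, so Theorem \ref{th:spherical} applied to the constant family $f_t\equiv f$ gives
\[
\liminf_{t\to\infty}\int_{\scrU}\chi_{\scrE_t}(\vecx,\Phi^t R(\vecx)g)\,d\lambda(\vecx)\;\ge\;\int f\,d\lambda\,d\mu\;\ge\;(\lambda\times\mu)(\scrF_{t_1})-\eps.
\]
Letting $\eps\to 0$ and then $t_1\to\infty$, and using that the $\scrF_{t_1}$ nest upward to $\lim(\inf\scrE_t)^\circ$, gives \eqref{inf-s}.

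For the upper bound \eqref{sup-s}, I will symmetrically set $\scrG_{t_1}:=\overline{\bigcup_{s\ge t_1}\scrE_s}$, which is closed and contains $\scrE_t$ for all $t\ge t_1$. Outer regularity yields an open $U\supset\scrG_{t_1}$ with $(\lambda\times\mu)(U\setminus\scrG_{t_1})<\eps$, and Urysohn supplies a continuous $f:\scrU\times G/\Gamma\to[0,1]$ with $f\ge\chi_{\scrG_{t_1}}$ and $f=0$ off $U$. Since $\chi_{\scrE_t}\le f$ for $t\ge t_1$, Theorem \ref{th:spherical} gives the matching upper bound by $(\lambda\times\mu)(\scrG_{t_1})+\eps$; letting $\eps\to 0$ and $t_1\to\infty$ (the $\scrG_{t_1}$ nest downward to $\lim\overline{\sup\scrE_t}$) proves \eqref{sup-s}. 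Assertion \eqref{lim-s} then follows at once, because under the measure-zero hypothesis the bounds in \eqref{inf-s} and \eqref{sup-s} coincide with $(\lambda\times\mu)(\limsup\scrE_t)$.

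The argument is essentially mechanical once Theorem \ref{th:spherical} is in hand, and I do not anticipate any genuinely hard step. The one place requiring mild attention is that $\scrU\times G/\Gamma$ is non-compact (the open set $\scrU\subset\RR^{n-1}$ need not be relatively compact, and $G/\Gamma$ may have cusps), so I rely on the fact that Borel probability measures on Polish locally compact Hausdorff spaces are automatically Radon and hence inner and outer regular, together with the crucial allowance in Theorem \ref{th:spherical} for test functions that are only bounded rather than compactly supported.
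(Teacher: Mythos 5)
Your proposal is correct and follows the same route the paper intends: the paper proves Corollary \ref{charThm-spherical} by the remark ``As in the case for horospherical averages,'' which points to the proof of Corollary \ref{charThm}, itself the standard Portmanteau-style sandwich argument (cf.\ Theorem 5.6 of the cited reference), with Theorem \ref{th:spherical} substituted for Theorem \ref{th:horospherical}. Your filling-in of the details --- inner/outer regularity of $\lambda\times\mu$ (automatic on a Polish space by Ulam's theorem), Urysohn functions sandwiching $\chi_{\scrE_t}$ between continuous test functions, applying the equidistribution theorem to constant families $f_t\equiv f$, and monotone convergence as $t_1\to\infty$ along the nested families $\scrF_{t_1}$ and $\scrG_{t_1}$ --- is exactly the intended argument, and the final step correctly observes that $\lim(\inf\scrE_t)^\circ\subset\liminf\scrE_t\subset\limsup\scrE_t\subset\lim\overline{\sup\scrE_t}$ forces all four to have equal $(\lambda\times\mu)$-measure under the stated null hypothesis.
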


\section{Projection statistics for non-cuspidal observer}\label{sec:Projection-sph}

Let us now return to the study of the fine-scale statistics of the multiset of directions of lattice points $\scrP_{t,s}(g\wbar)$ as seen from an observer at the origin $z=\j$. Measuring correlations on the sphere of directions is a little more awkward than on a torus/horosphere, since rotations generally do not commute. We use the matrix $E(\vecx)$ to obtain a coordinate chart of a small neighborhood of the south pole of the sphere $\SS^{n-1}$ via the map $\vecx\to E(\vecx)^{-1} \e^{-1} \j$. We then define a shrinking test set in that neighborhood by
\begin{equation}
\scrB_{t,s}(\scrA,0) := \{ E(\vecx)^{-1} \e^{-1} \j : \vecx\in\rho_{t,s} \scrA \}
\end{equation}
where $\scrA\subset\RR^{n-1}$ is a fixed bounded set and the scaling factor $\rho_{t,s}>0$ is chosen so that 
\begin{equation}
\omega( \scrB_{t,s}(\scrA,0) )= \frac{\Omega_n\vol_{\RR^{n-1}}\scrA}{\#\scrP_{t,s}(g\wbar)}.
\end{equation}
(We will see below that $\rho_{t,s} \sim \vartheta^{-1/(n-1)}\, \e^{-t}$ for $t$ large.)
To rotate this set randomly, we use the map $\vecx\mapsto R(\vecx)$ of a open subset $\scrU\subset\RR^{n-1}$ defined in the previous section, and set
\begin{equation}\label{Ats-sph}
\scrB_{t,s}(\scrA,\vecx) := R(\vecx)^{-1} \scrB_{t,s}(\scrA,0) .
\end{equation}
A key observation will be that the limit distribution of the random variable
\begin{equation}\label{Nts-sph}
\scrN_{t,s}(\scrA,\vecx;g\wbar) := \#(\scrP_{t,s}(g\wbar) \cap \scrB_{t,s} (\scrA,\vecx))
\end{equation}
will be independent of the choice of $R$.
As before, the scaling of the test set ensures that, for any probability measure $\lambda$ with continuous density, 
\begin{equation}\label{expect22-sph}
\lim_{t\to\infty} \int_{\scrU} \scrN_{t,s}(\scrA,\vecx;g\wbar)\, d\lambda(\vecx) = \vol_{\RR^{n-1}} \scrA .
\end{equation}
This formula also follows from the convergence of moments (see Section \ref{sec:Moments-sph}) and the explicit formula \eqref{five-six} for the first moment of the limit distribution, under the weaker assumption that $\lambda$ has bounded density.

As in the case of a  cuspidal observer, we consider the joint distribution with respect to several test sets $\scrA_1,\ldots,\scrA_m$:

\begin{thm}\label{th:distributionB}
Let $\scrU\subset \R^{n-1}$ be a nonempty open subset and
let $R: \scrU\to K$ be a smooth map such that the map
$\scrU\to\partial\HH^{n}$, $\vecx\mapsto R^{-1}(\vecx) 0$,
has nonsingular differential at Lebesgue-almost all $\vecx\in \scrU$.
Let $\lambda$ be a Borel probability measure on $\scrU$ absolutely continuous with respect to the Lebesgue measure.
Then, for every $g\in G$, $\slim\in(0,\infty]$, $r=(r_1,\ldots,r_m)\in\ZZ_{\geq 0}^m$ and $\scrA=\scrA_1\times\cdots\times\scrA_m$ with $\scrA_j\subset\RR^{n-1}$ bounded with boundary of Lebesgue measure zero,
\begin{equation}
\lim_{t\to\infty} \lambda( \{ \vecx\in\scrU : \scrN_{t,s} (\scrA_j,\vecx;g\wbar) = r_j \;\forall j \}) = E_{\slim}(r,\scrA;\wbar)
\end{equation}
where the limit distribution $E_{\slim}(r,\scrA;\wbar)$ is the same as in the case of a cuspidal observer in Theorem \ref{th:distributionA}. In particular, the limit is independent of $g$, $R$, $\lambda$, and $\scrU$.
\end{thm}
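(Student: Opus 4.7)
The plan is to adapt the argument from the proof of Theorem \ref{th:distributionA}, substituting the spherical equidistribution of Corollary \ref{charThm-spherical} for the horospherical equidistribution of Corollary \ref{charThm}. The central geometric observation is that, after rotating the shrinking test set to the south pole and then applying $\Phi^t$, the lattice points one must count are those lying in a region of $\HH^n$ that asymptotes to the cuspidal cone $\scrZ(s,\scrA_j)$ (up to a $K$-symmetry that leaves $E_{\slim}$ invariant).

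First, I would rewrite the counting function as a lattice-point count on a translated-rotated copy of the orbit. Since $R(\vecx)\in K$ fixes $\j$ isometrically, the direction $\phi(g\gamma w)$ lies in $\scrB_{t,s}(\scrA_j,\vecx)=R(\vecx)^{-1}\scrB_{t,s}(\scrA_j,0)$ precisely when $R(\vecx)g\gamma w = E(\vecy)^{-1}\e^{-d}\j$ for some $\vecy\in\rho_{t,s}\scrA_j$, with $d=d(g\gamma w)$. Combining with the shell condition $d\in(t-s,t]$ and applying $\Phi^t$ yields
\begin{equation}
\scrN_{t,s}(\scrA_j,\vecx;g\wbar)=\#\bigl(\Phi^t R(\vecx) g\wbar\cap \scrZ_t^{(j)}\bigr),
\end{equation}
where the target region
\begin{equation}
\scrZ_t^{(j)}:=\Bigl\{\bigl(\Phi^t E(\vecy)^{-1}\Phi^{-t}\bigr)\,u\j:\vecy\in\rho_{t,s}\scrA_j,\;u\in[1,\e^s)\Bigr\}
\end{equation}
is independent of $\vecx$ and $R$. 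Using the scaling $\rho_{t,s}\sim\vartheta^{-1/(n-1)}\e^{-t}$ and the fact that $E(\vecy)=I+\bigl(\begin{smallmatrix}0&\vecy\\-\vecy'&0\end{smallmatrix}\bigr)+O(|\vecy|^2)$, direct matrix computation gives
\begin{equation}
\Phi^t E(\rho_{t,s}\vecy_0)^{-1}\Phi^{-t}\longrightarrow n\bigl(-\vartheta^{-1/(n-1)}\vecy_0\bigr)
\end{equation}
uniformly on compacta, so $\scrZ_t^{(j)}\to\scrZ(\slim,-\scrA_j)$. Since there is an element of $K$ acting on $V_{n-2}$ as $\vecx\mapsto-\vecx$ and $\mu$ is $G$-invariant, $\mu$-measures of level sets of $\#(\cdot\cap\scrZ(\slim,-\scrA_j))$ coincide with those for $\scrZ(\slim,\scrA_j)$, matching the limit distribution from Theorem \ref{th:distributionA}.

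With this reduction in hand, I would mirror the endgame of Theorem \ref{th:distributionA}. Given $\eps>0$, choose $\scrA_j^-\subset\scrA_j\subset\scrA_j^+$ with boundary of measure zero and $\vol(\scrA_j^+\setminus\scrA_j^-)<\eps$; the uniform convergence above implies that for $t\ge t_0(\eps)$ one has $\scrZ(\slim,-\scrA_j^-)\subset\scrZ_t^{(j)}\subset\scrZ(\slim,-\scrA_j^+)$ (the analogue of Lemma \ref{sat-lem}). Define the sandwich events
\begin{equation}
\scrE_s^\pm:=\scrU\times\bigl\{h\in G/\Gamma:\#(h\wbar\cap\scrZ(\slim,-\scrA_j^\pm))\le r_j\;\forall j\bigr\},
\end{equation}
so that the indicator of $\{\vecx:\scrN_{t,s}(\scrA_j,\vecx;g\wbar)\le r_j\,\forall j\}$ is trapped between $\chi_{\scrE_s^+}(\vecx,\Phi^t R(\vecx)g)$ and $\chi_{\scrE_s^-}(\vecx,\Phi^t R(\vecx)g)$. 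Applying Corollary \ref{charThm-spherical} to both bounds and then letting $\eps\to0$ via Lemma \ref{lem:small_sets} (which controls $\mu(\overline{\scrE_s^-}\setminus(\scrE_s^+)^\circ)$) gives the convergence in distribution for $\slim<\infty$. For $\slim=\infty$, I would truncate as in Lemma \ref{lem:truncation}: Chebyshev's inequality combined with \eqref{expect22-sph} (which extends to $\lambda$ with bounded density) bounds the contribution from the cuspidal tail $\scrZ(\slim,\infty,-\widetilde\scrA)$ by $O(\e^{-(n-1)\slim/2})$ uniformly in $t$, reducing the infinite case to the finite one.

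The main obstacle will be making the sandwiching uniform in $\vecy_0$ across the sets $\scrA_j^\pm$. The error in $\Phi^t E(\rho_{t,s}\vecy_0)^{-1}\Phi^{-t} - n(-\vartheta^{-1/(n-1)}\vecy_0)$ is only $O(\e^{-t})$, and this discrepancy must be absorbed by taking $\scrA_j^\pm$ to be $\e^{-t}$-thickenings/shrinkings of $\scrA_j$; verifying that such a thickening indeed contains $\scrZ_t^{(j)}$ requires $C^1$-type control on the conjugation uniformly on compact sets of parameters. Beyond this, the argument is routine because the boundary of $\scrA_j$ has Lebesgue measure zero by hypothesis, so the thickening loses $o(1)$ volume and Lemma \ref{lem:small_sets} closes the estimate.
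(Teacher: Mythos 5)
Your overall strategy matches the paper's: rewrite $\scrN_{t,s}$ as a lattice-point count in a distorted cone, show that cone converges to a cuspidal cone $\scrZ(s,\cdot)$ under $\Phi^t$, sandwich, and apply Corollary \ref{charThm-spherical} plus Lemmas \ref{lem:small_sets} and \ref{lem:truncation}. The paper does precisely this, packaging the geometric step as Lemma \ref{sat-lem-sph}. Your observation that the limiting cone is $\scrZ(s,-\scrA_j)$ rather than $\scrZ(s,\scrA_j)$ is sharp and real: the paper's own computation gives $\Phi^t E(\vecx)^{-1}\e^{r-t}\j \approx -\e^t\vecx + \e^r\j$, and yet Lemma \ref{sat-lem-sph} and the final statement silently suppress the sign. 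You deserve credit for noticing.

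However, your proposed resolution contains a genuine error. You assert that there is an element of $K$ acting on $V_{n-2}$ as $\vecx\mapsto-\vecx$, so that $\mu$-invariance identifies the distributions for $\scrA$ and $-\scrA$. This is false whenever $n$ is even (including the case $n=2$). The boundary map $\vecx\mapsto-\vecx$ extends uniquely to the isometry $\vecx+\j y\mapsto -\vecx+\j y$ of $\HH^n$, whose differential at $\j$ is $\operatorname{diag}(-1,\dots,-1,1)$ with determinant $(-1)^{n-1}$; it lies in $K\cong\SO(n)$ only when $n$ is odd. Equivalently, $-I\in\SO(n-1)$ (the centralizer of $\Phi^\RR$ in $K$) precisely when $n-1$ is even. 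For $n$ even, the reflection is orientation-reversing and lies outside $G$; conjugation by it sends $\Gamma$ to a possibly different lattice $\Gamma^*$, so $E_\slim(r,-\scrA;\Gamma w)=E_\slim(r,\scrA;\Gamma^* w^*)$ and there is no a priori identification with $E_\slim(r,\scrA;\Gamma w)$. The harmless way to dispose of the sign is to absorb it into the choice of chart, e.g.\ use $\vecy\mapsto E(\vecy)\e^{-1}\j$ rather than $\vecy\mapsto E(\vecy)^{-1}\e^{-1}\j$ in the definition of $\scrB_{t,s}(\scrA,0)$, or replace $\scrA_j$ by $-\scrA_j$ throughout; you should not invoke a $K$-element that does not exist.

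A second, smaller gap: your sandwich $\scrZ(\slim,-\scrA_j^-)\subset\scrZ_t^{(j)}\subset\scrZ(\slim,-\scrA_j^+)$ only perturbs the horizontal boundary of the cuspidal cone. But $\Phi^t\scrC(t-s,t,\scrB_{t,s}(\scrA_j,0))$ has distorted boundary in the $\Im z$-direction as well: $\Im\big(\Phi^t E(\vecy)^{-1}\e^{r-t}\j\big)=\e^r\big(1+O(\e^{-2t})\big)$, not exactly $\e^r$, so a genuine two-sided inclusion requires slack at $\Im z=1$ and $\Im z=\e^\slim$ too. The paper handles this with cones $\scrZ(\epsilon,\slim^-,\scrA_j^-)\subset\cdots\subset\scrZ(-\epsilon,\slim+\epsilon,\scrA_j^+)$, widening and shortening the cylinder in both directions. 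You gesture at the needed uniform $C^1$-control, but the sandwiching sets you exhibit are not the right ones; they should also carry $\epsilon$-slack in the height parameters, after which Lemma \ref{lem:small_sets} closes the estimate as you describe.
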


The proof of Theorem \ref{th:distributionB} is almost identical to that of Theorem \ref{th:distributionA}, with horospherical averages replaced by spherical averages (Corollary \ref{charThm-spherical}), and Lemma \ref{sat-lem} replaced by the following.

\begin{lemma}\label{sat-lem-sph}
Under the hypotheses of Theorem \ref{th:distributionB}, given $\epsilon>0$ there exist $t_0<\infty$ and bounded subsets $\scrA_j^-\subset\scrA_j^+\subset\RR^{n-1}$ with boundary of measure zero, such that 
\begin{equation}
\vol_{\RR^{n-1}}(\scrA_j^+\setminus\scrA_j^-)<\epsilon
\end{equation}
and, for all $t\geq t_0$,
\begin{align}
\#( \Phi^t R(\vecx) g\wbar \cap \scrZ(\epsilon,\slim^-,\scrA_j^-) )\leq \scrN_{t,s} (\scrA_j,\vecx;g\wbar)  
\leq  \#( \Phi^t R(\vecx) g\wbar \cap \scrZ(-\epsilon,\slim+\epsilon,\scrA_j^+) ) 
\end{align}
with
\begin{equation}
\slim^- =
\begin{cases} \slim-\epsilon & (\slim<\infty) \\
\epsilon^{-1} & (\slim = \infty) .
\end{cases}
\end{equation}
\end{lemma}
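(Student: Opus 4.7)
My plan is to mirror the proof of Lemma \ref{sat-lem}, with the key new ingredient being an asymptotic expansion for the action of $\Phi^t$ on small spherical perturbations of the south pole of $\SS^{n-1}$.

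First, I unfold $\scrN_{t,s}$: since $R(\vecx)\in K$ fixes $\j$ and preserves hyperbolic distance, and $\scrB_{t,s}(\scrA_j,\vecx)=R(\vecx)^{-1}\scrB_{t,s}(\scrA_j,0)$, we obtain
\begin{equation*}
\scrN_{t,s}(\scrA_j,\vecx;g\wbar)=\#\{\gamma\in\Gamma/\Gamma_w:R(\vecx)g\gamma w=E(\vecx')^{-1}\j y,\;\vecx'\in\rho_{t,s}\scrA_j,\;y\in[\e^{-t},\e^{\slim-t})\},
\end{equation*}
where I use $\phi(\j y)=\j\e^{-1}$ and $d(\j y,\j)=|\log y|$ for $0<y<1$. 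Applying $\Phi^t$ to the identity $R(\vecx)g\gamma w=E(\vecx')^{-1}\j y$ and using the explicit Clifford M\"obius formula for $E(\vecx')^{-1}$ displayed in Section \ref{sec:Sphere}, I Taylor-expand in $|\vecx'|$ (with $\sin|\vecx'|=|\vecx'|+O(|\vecx'|^3)$, $\cos|\vecx'|=1+O(|\vecx'|^2)$) to obtain, for $\vecx'=\rho_{t,s}\vecv$ and $y=\e^{\tau-t}$,
\begin{equation*}
\Phi^tE(\vecx')^{-1}\j y=-\vartheta^{-1/(n-1)}\vecv+\j\e^\tau+O(\e^{-t}),
\end{equation*}
uniformly on compacta in $(\vecv,\tau)$, invoking the asserted scaling $\e^t\rho_{t,s}\to\vartheta^{-1/(n-1)}$. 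The sign in front of $\vecv$ is a consequence of the orientation of the chart $\vecx'\mapsto E(\vecx')^{-1}\e^{-1}\j$ and can be harmlessly absorbed into the choice of $\scrA_j^\pm$ below (which may equivalently be taken to approximate $-\scrA_j$ instead of $\scrA_j$, since these have the same boundary measure and Lebesgue measure).

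Now fix $\eps>0$. Since $\partial\scrA_j$ has Lebesgue measure zero, I can find $\delta>0$ and bounded sets $\scrA_j^-\subset\scrA_j\subset\scrA_j^+$ with boundaries of measure zero such that $\scrA_j^-$ lies at distance $>\delta$ from $\partial\scrA_j$ inside $\scrA_j$, $\scrA_j^+$ contains the open $\delta$-neighborhood of $\scrA_j$, and $\vol_{\RR^{n-1}}(\scrA_j^+\setminus\scrA_j^-)<\eps$. Choose $t_0$ so that for all $t\geq t_0$ the $O(\e^{-t})$ error in the expansion is smaller than both $\vartheta^{-1/(n-1)}\delta$ and $\eps$. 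For the upper bound, any $\gamma$ counted by $\scrN_{t,s}$ (i.e.\ with $\vecv\in\scrA_j$ and $\tau\in(0,\slim]$) then has $\Phi^tR(\vecx)g\gamma w\in\scrZ(-\eps,\slim+\eps,\scrA_j^+)$. For the lower bound, the leading-order map $(\vecv,\tau)\mapsto(-\vartheta^{-1/(n-1)}\vecv,\e^\tau)$ is a global diffeomorphism of $\RR^{n-1}\times\RR_{>0}$ onto itself; by the inverse function theorem its $O(\e^{-t})$-perturbation remains a local diffeomorphism for $t$ large, so every point of $\scrZ(\eps,\slim^-,\scrA_j^-)$ arises as $\Phi^tR(\vecx)g\gamma w$ for some $\gamma$ with $\vecv\in\scrA_j$ and $\tau\in(0,\slim]$, and is therefore counted by $\scrN_{t,s}$. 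The piecewise choice $\slim^-=\slim-\eps$ for $\slim<\infty$ and $\slim^-=\eps^{-1}$ for $\slim=\infty$ guarantees that the $\eps$-buffer on $\log\Im z$ translates into $\tau\in(0,\slim]$ in both cases.

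The main technical difficulty is establishing the uniform asymptotic expansion in the second step, which demands careful bookkeeping in the Clifford M\"obius formalism (in particular tracking the involutions $a\mapsto a'$ and verifying the denominator in the M\"obius action is invertible with controlled error). Once this expansion is in hand, the inner/outer approximation sandwich is routine and identical in structure to the cuspidal case treated in Lemma \ref{sat-lem}.
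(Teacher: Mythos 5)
Your proposal follows the same route as the paper: rewrite $\scrN_{t,s}$ as a count in $\Phi^t\scrC(t-s,t,\scrB_{t,s}(\scrA_j,0))$, Taylor-expand $\Phi^t E(\vecx)^{-1}\e^{r-t}\j\approx -\e^t\vecx+\e^r\j$ (equivalently $-\vartheta^{-1/(n-1)}\vecv+\j\e^\tau$), conclude that the translated spherical cone approximates a cuspidal cone $\scrZ(\slim,\cdot)$ with the scaling $\rho_{t,s}\sim\vartheta^{-1/(n-1)}\e^{-t}$ fixed by volume comparison, and then sandwich with inner/outer approximants $\scrA_j^\pm$. You are in fact more careful than the printed proof about the sign coming from the orientation of the chart $\vecx\mapsto E(\vecx)^{-1}\e^{-1}\j$ (the paper tacitly identifies the translated cone with $\scrZ(\slim,\vartheta^{1/(n-1)}\e^t\rho_{t,s}\scrA_j)$ rather than its reflection), and you correctly note that the lemma statement — which only requires $\scrA_j^-\subset\scrA_j^+$ and $\vol(\scrA_j^+\setminus\scrA_j^-)<\eps$ — leaves room to absorb that sign into the choice of $\scrA_j^\pm$.
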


\begin{proof}
For $\scrB\subset\SS^{n-1}$ and $-\infty\leq a<b< \infty$ define the cone
\begin{equation}
\scrC(a,b,\scrB) := 
\{ z\in\HH^n\setminus\{\j\} : \varphi(z)\in\scrB,\; a<d(z)\leq b \}  .
\end{equation}
Its volume satisfies
\begin{equation}\label{volli}
\vol_{\HH^n} \scrC(a,b,\scrB) = \frac{\omega(\scrB)}{\Omega_n}\; \vol_{\HH^n} \scrC(a,b,\SS^{n-1}) ,
\end{equation}
where $\scrC(a,b,\SS^{n-1})$ is the spherical shell with inner resp.\ outer radius $a$ and $b$.
Now
\begin{equation}
\begin{split}
\scrN_{t,s} (\scrA_j,\vecx;g\wbar) & = \#( g\wbar \cap \scrC(t-s,t,\scrB_{t,s}(\scrA_j,\vecx))) \\
& = \#( \Phi^t R(\vecx) g\wbar \cap \Phi^t \scrC(t-s,t,\scrB_{t,s}(\scrA_j,0))) ,
\end{split}
\end{equation}
where
\begin{equation}
\Phi^t  \scrC(t-s,t,\scrB_{t,s}(\scrA_j,0)) = \bigcup_{0\leq r<\min\{t,s\}} \{ \Phi^t  E(\vecx)^{-1} \e^{r-t} \j : \vecx\in \rho_{t,s} \scrA_j \} .
\end{equation}
Note that $\rho_{t,s}$ may depend on $\scrA_j$.
The volume of this cone is in view of \eqref{volli}, for $t$ large,
\begin{equation}
\begin{split}
\vol_{\HH^n}  \scrC(t-s,t,\scrB_{t,s}(\scrA_j,0)) 
& \sim \e^{(n-1)t} \,\frac{1-\e^{-(n-1)s}}{n-1}\,\omega(\scrB_{t,s}(\scrA_j,0)) \\
& = \e^{(n-1)t} \,\frac{1-\e^{-(n-1)s}}{n-1}\,\frac{\Omega_n\vol_{\RR^{n-1}}\scrA}{\#\scrP_{t,s}(g\wbar)} \\
& \sim \#\Gamma_w \vol_{\HH^n}(\Gamma\bs\HH^n)\, \vol_{\RR^{n-1}}\scrA,
\end{split}
\end{equation}
the same volume as the cuspidal cone $\scrZ(\slim,\scrA)$, \eqref{eq:volume_cylinder}.
We have, 
\begin{equation}
\begin{split}
\Phi^t  E(\vecx)^{-1}  & = \Phi^t \bigg( 1 + \begin{pmatrix} 0 & -\vecx \\ \vecx' & 0 \end{pmatrix} + O(\rho_{t,s}^2) \bigg) \\
& = \bigg( 1 + \begin{pmatrix} 0 & -\e^t \vecx \\ \e^{-t} \vecx' & 0 \end{pmatrix} + O(\e^t \rho_{t,s}^2) \bigg) \Phi^t \\
& = \bigg( n(-\e^t \vecx) + O(\e^{-t}\rho_{t,s}) + O(\e^t \rho_{t,s}^2) \bigg) \Phi^t 
\end{split}
\end{equation}
and so
\begin{equation}
\Phi^t  E(\vecx)^{-1} \e^{r-t} \j = -\e^t \vecx + \e^r \j +\text{ lower order terms.} 
\end{equation}
This shows that, for every fixed $r$, the set 
\begin{equation}
\{ \Phi^t  E(\vecx)^{-1} \e^{r-t} \j : \vecx\in\rho_{t,s} \scrA_j \}
\end{equation}
is close to
\begin{equation}
\{ -\vecx + \e^r \j : \vecx\in \e^t \rho_{t,s} \scrA_j \} .
\end{equation}
We conclude that, for $t$ large, $\Phi^t  \scrC(t-s,t,\scrB_{t,s}(\scrA_j,0))$ approximates $\scrZ(\slim,\vartheta^{1/(n-1)} \e^t \rho_{t,s} \scrA_j) )$. Comparing the volumes of the two yields $\rho_{t,s} \sim \vartheta^{-1/(n-1)}\, \e^{-t}$.
\end{proof}

Theorem \ref{th:distribution} is now a corollary of Theorem \ref{th:distributionB}.

\begin{proof}[Proof of Theorem \ref{th:distribution}]
In Theorem \ref{th:distributionB}, choose $m=1$ and $\scrA\subset\RR^{n-1}$ a Euclidean open ball of volume $\sigma$. Then
\begin{equation}
\scrB_{t,s}(\scrA,0) = \{ E(\vecx)^{-1} \e^{-1} \j : \vecx\in \rho_{t,s} \scrA \} =\scrD_{t,s} (\sigma,\e^{-1}\j),
\end{equation}
which is the spherical disc with the required volume \eqref{eq:scaling}. Define  coordinate charts
\begin{equation}
\scrU \to \SS^{n-1}, \qquad \vecx \mapsto \vecv=R(\vecx)^{-1} \e^{-1} \j,
\end{equation}
for suitable $\scrU$ and $R(\vecx)$ with non-singular differential. (Take for instance $R(\vecx)=R_0 E(\vecx)$ which will parametrize a neighborhood of any point $\vecv_0=R_0^{-1} \e^{-1} \j\in\SS^{n-1}$.) This implies that $\scrU\to\partial\HH^{n}$, $\vecx\mapsto R^{-1}(\vecx) 0$ has non-singular differential, and we can apply Theorem \ref{th:distributionB} to prove \eqref{th:distribution-eq1}, with $\lambda$ in \eqref{th:distribution-eq1} restricted to each coordinate chart. The cuspidal cone in Theorem \ref{th:distribution} is defined by 
\begin{equation}\label{zyl0-def}
\scrZ_0(\slim,\sigma):=\scrZ(\slim,\scrA) ,
\end{equation}
with $\scrA$ a ball of volume $\sigma$ as above.
For $\sigma=0$ or $\slim=0$, we define $\scrZ_0(\slim,\sigma)$ as the empty set. The continuity in $\slim$ and $\sigma$, as well as \eqref{01law}, follow from the continuity stated in \eqref{conti}.
\end{proof}

\section{Convergence of moments for non-cuspidal observer}\label{sec:Moments-sph}

In analogy with Section \ref{sec:Moments}, we define the moment generating function for a non-cuspidal observer by
\begin{equation}
\GG_{t,s}(\tau_1,\ldots, \tau_m;\scrA) := \int_{\scrU} \exp\bigg(\sum_{j=1}^m \tau_j \scrN_{t,s}(\scrA_j,\vecx;g\wbar) \bigg) d\lambda(\vecx) ,
\end{equation}
where $\scrU$, $\lambda$ are as in the previous section.

\begin{thm}\label{th:moment-sph}
Let $\lambda$ be a probability measure on $\scrU$ with bounded density. 
Then there is a constant $c_0>0$ such that for $\Re_+\tau_1+\ldots + \Re_+\tau_m< c_0$, $\slim\in(0,\infty]$, 
\begin{equation}
\lim_{t\to\infty} \GG_{t,s}(\tau_1,\ldots, \tau_m;\scrA)  = \GG_{\slim}(\tau_1,\ldots, \tau_m;\scrA),
\end{equation}
with $\GG_{\slim}(\tau_1,\ldots, \tau_m;\scrA)$ as defined in \eqref{limit-G}.
\end{thm}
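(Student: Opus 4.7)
The proof should mirror that of Theorem \ref{th:moment-cusp} step by step, with Theorem \ref{th:distributionB} and Corollary \ref{charThm-spherical} substituted for Theorem \ref{th:distributionA} and Corollary \ref{charThm}, and Lemma \ref{sat-lem-sph} replacing Lemma \ref{sat-lem}. For $\slim<\infty$ the cone $\scrZ(\slim,\scrA_j)$ is bounded; by proper discontinuity of $\Gamma$ the counting function $\scrN_{t,s}(\scrA_j,\vecx;g\wbar)$ admits a uniform a priori upper bound, so $\GG_{\slim}(\tau;\scrA)$ is a polynomial in $\e^{\tau_j}$ and the desired convergence reduces to Theorem \ref{th:distributionB} by bounded convergence.

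For $\slim=\infty$ I would decompose $\GG_{t,s}$ into a bulk piece $\max_j r_j<R$ and a tail piece $\max_j r_j\geq R$. The bulk converges to the corresponding portion of $\GG_\slim$ by Theorem \ref{th:distributionB}; analyticity of $\GG_\slim$ on $\Re_+\tau_1+\cdots+\Re_+\tau_m<c_0$ is inherited verbatim from the argument for Theorem \ref{th:moment-cusp}(i), since that step only uses the formula \eqref{lim:form} for $E_\slim$, which is common to the cuspidal and non-cuspidal settings. What remains is the tail bound, which by the same chain of reductions (Lemma \ref{sat-lem-sph} followed by Lemma \ref{lem-1}) boils down to showing
\begin{equation*}
\limsup_{t\to\infty}\int_\scrU \ind{\#(\Phi^t R(\vecx)g\wbar \cap \scrZ_r)\geq 1}\, d\lambda(\vecx) \leq C\, \e^{-(n-1)\zeta r},
\end{equation*}
where $\scrZ_r := \scrZ(\zeta r-\eta',\infty,\widetilde\scrA^+)$ for appropriate constants $\zeta,\eta'>0$ and an enlargement $\widetilde\scrA^+$ of $\bigcup_j \scrA_j$ with boundary of measure zero.

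To establish this estimate I would apply Corollary \ref{charThm-spherical} to the constant family
$\scrE_t=\scrU\times \{h\in G/\Gamma :\#(h\wbar\cap \scrZ_r)\geq 1\}$, with $\scrZ_r$ chosen open so that $\scrE_t$ is open. Then $\lim\overline{\sup\scrE_t}=\overline\scrE$, and the corollary yields $\limsup_t \int\chi_{\scrE_t}(\vecx,\Phi^t R(\vecx)g)\,d\lambda(\vecx)\leq \mu(\overline\scrE)$. Chebyshev's inequality together with the unfolding identity used in the proof of Lemma \ref{lem:small_sets} and the volume formula \eqref{eq:volume_cylinder} then bound $\mu(\overline\scrE)$ by $C' \e^{-(n-1)\zeta r}$. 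Summing the resulting geometric series for $\widetilde\tau=\sum_j \Re_+\tau_j<(n-1)\zeta =: c_0$ forces the tail to vanish as $R\to\infty$, and combining with the bulk convergence completes the proof.

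The main obstacle I foresee is quantifying $\mu(\overline\scrE)$. Because $\scrZ_r$ extends unboundedly into the cusp at $\infty$, points of $\overline\scrE\setminus\scrE$ can arise from sequences $h_n\to h$ in $G/\Gamma$ whose witnessing lattice elements $\gamma_n\in\Gamma$ escape to infinity; one has to verify that every such limit still lies in $\{h:\#(h\wbar\cap \scrZ_r^\sharp)\geq 1\}$ for a closed enlargement $\scrZ_r^\sharp\supset\overline{\scrZ_r}$ whose excess volume $\vol_{\HH^n}(\scrZ_r^\sharp\setminus\scrZ_r)$ can be made arbitrarily small, so that Lemma \ref{lem:small_sets} still delivers an exponential bound with the correct rate in $r$. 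Handling this boundary discrepancy without losing the exponential decay is the most delicate point; everything else is essentially a transcription of the cuspidal argument.
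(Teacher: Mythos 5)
Your proposal departs from the paper's route in a way that introduces a real gap. The paper's proof of this theorem is a transcription of the cuspidal case, but the tail estimate does \emph{not} go through Corollary \ref{charThm-spherical}; instead it relies on Lemma \ref{lem-3-sph}, which gives the bound
\[
\sup_{t\geq 0}\, \int_{\scrU} \#\big( \Phi^t R(\vecx) g\wbar \cap \scrC(0,t-\zeta r + \eta,\scrB_{t,\infty}(\scrA,0))\big) \,d\lambda(\vecx) \leq C \e^{-(n-1) \zeta r},
\]
\emph{uniformly} in $t\geq 0$. The uniformity matters: in the proof of Theorem \ref{th:moment-cusp} (which this proof mirrors) one must show that $\sum_{r\geq R}\e^{\widetilde\tau r}\int\ind{\scrN\geq r}\,d\lambda \to 0$ as $R\to\infty$, uniformly in $t$, so that the interchange $\lim_{R\to\infty}\limsup_{t\to\infty}$ is legitimate. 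A $\limsup_{t\to\infty}$ bound on each summand, as your equidistribution argument would produce, does not permit the interchange with the infinite sum over $r$; you would still need a $t$-uniform dominant to invoke a reverse-Fatou argument, and the equidistribution corollary alone does not provide one.

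The paper gets the uniform bound by elementary means: Lemma \ref{lem-3-sph} replaces $\scrB_{t,\infty}(\scrA,0)$ by a larger rotation-invariant disc $\scrD_t$ with $\omega(\scrD_t)=\sigma_0\e^{-(n-1)t}$, dominates $\lambda$ by a constant multiple of the Haar probability measure $m$ on $K$ (using bounded density), evaluates the $K$-average exactly as $\sigma_0\e^{-(n-1)t}\,\#\{\gamma : 0<d(\gamma g w)\leq \e^{t-\zeta r+\eta}\}$, and then applies the ball-count asymptotics \eqref{ball-count}. No equidistribution enters, and the estimate holds for all $t\geq 0$. Likewise Lemma \ref{lem-1-sph} works directly with the cones $\scrC(0,\cdot,\scrB_{t,\infty}(\scrA,0))$ rather than passing through Lemma \ref{sat-lem-sph} and the cuspidal Lemma \ref{lem-1}, since that reduction only applies for $t\geq t_0$ and would not yield a bound valid for all $t$. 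As a by-product, the boundary-closure difficulty you flag at the end of your proposal simply does not arise in the paper's route; it is an artifact of routing the tail through Corollary \ref{charThm-spherical}. Your treatment of the bulk and the $\slim<\infty$ case is fine, and your observation that analyticity of $\GG_\slim$ is inherited from Theorem \ref{th:moment-cusp}(i) is correct.
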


Theorem \ref{th:moment-sph} implies convergence of the mixed moments
\begin{equation}
\MM_{t,s}(\beta_1,\ldots, \beta_m;\scrA) := \int_{\scrU} \prod_{j=1}^m\left(\scrN_{t,s}(\scrA_j,\vecx;g\wbar)\right)^{\beta_j} d\lambda(\vecx) 
\end{equation}
for all $\beta_j\in\RR_{\geq 0}$. 

\begin{cor}\label{cor:moment-sph}
Let $\lambda$ be a probability measure on $\scrU$ with bounded density. Then, for all $\beta_1,\ldots,\beta_m\in\RR_{\geq 0}$, $\slim\in(0,\infty]$,
\begin{equation}
\lim_{t\to\infty} \MM_{t,s}(\beta_1,\ldots, \beta_m;\scrA)   = \MM_{\slim}(\beta_1,\ldots, \beta_m;\scrA)
\end{equation}
with $\MM_{\slim}(\beta_1,\ldots, \beta_m;\scrA)$ as defined in \eqref{def:moment}.
\end{cor}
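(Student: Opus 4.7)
The plan is to derive the corollary from Theorem \ref{th:moment-sph} via the standard implication that convergence of the moment generating function in a (complex) neighborhood of the origin, together with convergence in distribution, yields convergence of all non-negative real-power moments. Nothing new is needed beyond the two ingredients already in place: the weak convergence supplied by Theorem \ref{th:distributionB} and the exponential-moment control supplied by Theorem \ref{th:moment-sph}.

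First I would use Theorem \ref{th:distributionB} to conclude that the $\ZZ_{\geq 0}^m$-valued random vector $(\scrN_{t,s}(\scrA_1,\vecx;g\wbar),\ldots,\scrN_{t,s}(\scrA_m,\vecx;g\wbar))$, viewed on the probability space $(\scrU,\lambda)$, converges in distribution as $t\to\infty$ to the vector with joint law $E_\slim(\cdot,\scrA;\wbar)$. By the continuous mapping theorem, the product $\prod_{j=1}^m \scrN_{t,s}(\scrA_j,\vecx;g\wbar)^{\beta_j}$ then converges in distribution to $\prod_{j=1}^m r_j^{\beta_j}$.

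Next I would establish uniform integrability. Fix any $\alpha>1$ and choose $\delta>0$ small enough so that $m\alpha\delta<c_0$, with $c_0$ as in Theorem \ref{th:moment-sph}. The elementary bound $y^\beta \leq C_{\beta,\delta}\, \e^{\delta y}$ valid for all $y\geq 0$, combined with AM--GM, gives
\begin{equation}
\prod_{j=1}^m y_j^{\alpha\beta_j} \leq C\, \exp\Bigl(\delta \sum_{j=1}^m y_j\Bigr)
\end{equation}
for all $(y_1,\ldots,y_m)\in\RR_{\geq 0}^m$, with a constant $C=C(\alpha,\beta,\delta)$. Integrating against $\lambda$ and applying Theorem \ref{th:moment-sph} at the point $(\delta,\ldots,\delta)$ bounds $\sup_{t} \int_{\scrU} \prod_j \scrN_{t,s}(\scrA_j,\vecx;g\wbar)^{\alpha\beta_j}\,d\lambda(\vecx)$ uniformly in $t$. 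By de la Vallée Poussin's criterion this ensures uniform $\lambda$-integrability of the family $\prod_j \scrN_{t,s}(\scrA_j,\cdot;g\wbar)^{\beta_j}$.

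Convergence in distribution together with uniform integrability then implies convergence of expectations, which is exactly the assertion $\MM_{t,s}(\beta_1,\ldots,\beta_m;\scrA)\to\MM_\slim(\beta_1,\ldots,\beta_m;\scrA)$. Finiteness of the limit moment is built into the argument, since the limit distribution also satisfies $\sum_r \e^{\delta\sum_j r_j}E_\slim(r,\scrA;\wbar)<\infty$ by Theorem \ref{th:moment-sph}(i). The entire argument is routine probability theory; I expect no genuine obstacle, the substantive work having been accomplished in the proofs of Theorems \ref{th:distributionB} and \ref{th:moment-sph}.
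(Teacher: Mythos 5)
Your argument is correct and is exactly the ``standard argument'' the paper invokes without spelling out: convergence in distribution from Theorem \ref{th:distributionB} plus uniform integrability from exponential-moment control via Theorem \ref{th:moment-sph}. Two cosmetic points: the product bound $\prod_j y_j^{\alpha\beta_j}\le C\exp(\delta\sum_j y_j)$ follows simply by multiplying the one-variable bounds $y_j^{\alpha\beta_j}\le C_j\e^{\delta y_j}$, so no AM--GM is needed; and the finiteness of $\GG_\slim$ near the origin is asserted in Theorem \ref{th:moment-cusp}(i) rather than in Theorem \ref{th:moment-sph}, which has no enumerated parts (though the two share the same limit function).
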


The proof of Theorem \ref{th:moment-sph} is completely analogous to that of Theorem \ref{th:moment-cusp}. It is again based on Lemma \ref{lem-2} and the following two lemmas, which substitute Lemmas \ref{lem-1} and \ref{lem-3}, respectively.

\begin{lem}\label{lem-1-sph}
Fix $a\in\RR$ and a bounded subset $\scrA\subset\RR^{n-1}$. The there exists positive constants $\zeta,\eta,t_0$ so that for all $g\in G$, $r\in\NN$, $t\geq t_0$
\begin{equation}\label{ii1-sph}
\big[ \#( g\wbar \cap \scrC(0,t,\scrB_{t,\infty}(\scrA,0))) \geq r \big] \quad \Rightarrow \quad \big[\#( g\wbar \cap \scrC(0,t-\zeta r + \eta,\scrB_{t,\infty}(\scrA,0))) \geq 1\big].
\end{equation}
\end{lem}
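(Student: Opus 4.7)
Proof plan. The strategy is to use the geodesic flow $\Phi^t$ (an isometry of $\HH^n$) to reduce the spherical statement to its cuspidal counterpart, Lemma \ref{lem-1}. The point is that under $\Phi^t$ the thin spherical cone $\scrC(0,t,\scrB_{t,\infty}(\scrA,0))$ around the geodesic from $\j$ to $0\in\partial\HH^n$ is approximately a truncated cuspidal cone at $\infty$; applying Lemma \ref{lem-1} to the translated orbit $\Phi^t g\wbar$ (uniformly in $g$) then gives a point with large imaginary part, which pulls back to a point of $g\wbar$ close to $\j$.

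Step 1: Cuspidal containment. Parametrize $\scrC(0,t,\scrB_{t,\infty}(\scrA,0))$ by $z=E(\vecu)^{-1}\e^{-D}\j$ with $\vecu\in\rho_{t,\infty}\scrA$ and $D\in(0,t]$. The expansion of $\Phi^t E(\vecu)^{-1}$ in the proof of Lemma \ref{sat-lem-sph}, together with $\rho_{t,\infty}\sim\vartheta^{-1/(n-1)}\e^{-t}$, gives
\begin{equation*}
\Phi^t E(\vecu)^{-1}\e^{-D}\j \;=\; -\e^t\vecu + \e^{t-D}\j + O(\e^{-t}),
\end{equation*}
uniformly in the allowed range of $\vecu,D$. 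Hence we may fix a bounded set $\scrA^+\subset\RR^{n-1}$ containing a slight enlargement of $\vartheta^{-1/(n-1)}\scrA$, independent of $t$, and choose $t_0$ large enough that
\begin{equation*}
\Phi^t\,\scrC(0,t,\scrB_{t,\infty}(\scrA,0)) \;\subset\; \scrZ(-1,\infty,\scrA^+) \qquad (t\ge t_0).
\end{equation*}

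Step 2: Apply the cuspidal lemma and transfer back. Lemma \ref{lem-1} is uniform in the group element, so we apply it to $g'=\Phi^t g$ with test set $\scrA^+$ and $a=-1$, obtaining constants $\zeta,\eta>0$ (depending only on $\scrA^+$) such that
\begin{equation*}
\#\bigl(g'\wbar\cap\scrZ(-1,\infty,\scrA^+)\bigr)\ge r \;\Longrightarrow\; \#\bigl(g'\wbar\cap\scrZ(\zeta r-\eta,\infty,\scrA^+)\bigr)\ge 1.
\end{equation*}
Assume the hypothesis of the lemma. By Step 1 the LHS holds, so Lemma \ref{lem-1} yields a point $z^*\in g\wbar$ with $\Phi^t z^*\in\scrZ(\zeta r-\eta,\infty,\scrA^+)$; inspection of the proof of Lemma \ref{lem-1} (pigeonhole on imaginary parts) shows $z^*$ may be taken among the original $r$ points, hence $z^*\in\scrC(0,t,\scrB_{t,\infty}(\scrA,0))$. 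Writing $z^*=E(\vecu^*)^{-1}\e^{-D^*}\j$ and re-using the expansion of Step 1, $\Im(\Phi^t z^*)=\e^{t-D^*}(1+o(1))$ and $d(z^*,\j)=D^*+o(1)$. The inequality $\Im(\Phi^t z^*)\ge\e^{\zeta r-\eta}$ therefore forces $d(z^*,\j)\le t-\zeta r+\eta+O(1)$. The direction condition $\varphi(z^*)\in\scrB_{t,\infty}(\scrA,0)$ is automatic since $z^*$ lies in the original cone, so (after absorbing the $O(1)$ constant into an enlarged $\eta$) $z^*\in\scrC(0,t-\zeta r+\eta,\scrB_{t,\infty}(\scrA,0))$, as required.

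The main obstacle is the uniform Mobius expansion underpinning Step 1: one needs the $O(\e^{-t})$ error to be uniform across the whole range $D\in(0,t]$, including $D$ close to $t$ where the image point $\e^{t-D}\j$ is of bounded imaginary part. The same computation already appears in Lemma \ref{sat-lem-sph}, so the present argument reduces to extracting slightly more quantitative information from it. Once Step 1 is in place, the rest is a direct transfer of Lemma \ref{lem-1} through the isometry $\Phi^t$.
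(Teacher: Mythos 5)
Your approach — transferring the spherical statement to the cuspidal Lemma~\ref{lem-1} via the isometry $\Phi^t$ — is genuinely different from what the paper intends. The phrase ``analogous to the proof of Lemma~\ref{lem-1}'' means: redo the pigeonhole \emph{directly} in the spherical cone. Namely, at radius $d\le t-s_0$ the cross-section of $\scrC(0,t,\scrB_{t,\infty}(\scrA,0))$ has hyperbolic diameter $\lesssim \rho_{t,\infty}\sh d\lesssim \e^{d-t}\le\e^{-s_0}$, so once $s_0$ is large enough the triangle-inequality argument forces any two orbit points in the thin part $\scrC(0,t-s_0,\scrB_{t,\infty}(\scrA,0))$ to differ in radial distance by at least $\tfrac12\delta(\wbar)$; the annular base $\scrC(t-s_0,t,\scrB_{t,\infty}(\scrA,0))$ has diameter bounded uniformly in $t\ge t_0$, hence uniformly bounded orbit count $r_0$. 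This needs no M\"obius expansion at all. Your route is conceptually pleasing (the cuspidal and non-cuspidal lemmas become literally the same statement after conjugation by $\Phi^t$), but it is technically heavier and has two points that deserve explicit attention.

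First, the error claim in Step~1 is stated too strongly. The additive error in $\Phi^t E(\vecu)^{-1}\e^{-D}\j = -\e^t\vecu+\e^{t-D}\j+\text{error}$ is not $O(\e^{-t})$ uniformly; for $D$ near $0$ it is $O(1)$ (coming from $(\veca w+\vecb)\cdot O(\e^{-t-D})$ with $|\veca w+\vecb|\sim\e^{t-D}$). This is still enough for your containment, since the $O(1)$ error is absorbed by enlarging $\scrA^+$ and taking $a=-1$, and the \emph{multiplicative} error in $\Im$ does turn out to be $O(\e^{-t})$ uniformly in $D\in(0,t]$, which is what Step~2 actually uses. But the statement as written would be false; what Lemma~\ref{sat-lem-sph} proves is only for bounded $D$, so you are right that more quantitative work is needed.

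Second, the step ``inspection of the proof of Lemma~\ref{lem-1} shows $z^*$ may be taken among the original $r$ points'' is not a throwaway remark; it is essential. Lemma~\ref{lem-1} as a black box returns \emph{some} point of $\Phi^t g\wbar$ in $\scrZ(\zeta r-\eta,\infty,\scrA^+)$, but $\Phi^t g\wbar\cap\scrZ(-1,\infty,\scrA^+)$ generally contains points that do \emph{not} come from the spherical cone (e.g.\ preimages with large positive radial coordinate from $\j$), and such a point would be useless to you. What saves the argument is that the pigeonhole inside the proof of Lemma~\ref{lem-1} applies to \emph{any} chosen $r$ points: among your $r$ images, at most $r_0$ lie in the bounded base $\scrZ(-1,s_0,\scrA^+)$, and the remaining ones are radially separated, so one of \emph{them} has $\Im\ge\e^{s_0+\tfrac12\delta(\wbar)(r-r_0-1)}$. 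You should make this explicit rather than cite the lemma as stated and then appeal to its proof. With both of these points spelled out the argument is correct, though in the end it is more work than the direct spherical pigeonhole the paper has in mind.
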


\begin{proof}
This is analogous to the proof of Lemma \ref{lem-1}.
\end{proof}

\begin{lem}\label{lem-3-sph}
Fix a bounded subset $\scrA\subset\RR^{n-1}$ and $\zeta,\eta$ as in Lemma \ref{lem-1}. Let $\lambda$ be a probability measure on $\scrU$ with bounded density. Then there exists a constant $C$ such that for all $r\geq 0$
\begin{equation}
\sup_{t\geq 0} \int_{\scrU} \#( \Phi^t R(\vecx) g\wbar \cap \scrC(0,t-\zeta r + \eta,\scrB_{t,\infty}(\scrA,0))) \,d\lambda(\vecx) \leq C \e^{-(n-1) \zeta r} .
\end{equation}
\end{lem}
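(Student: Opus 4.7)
The plan is to mimic the horospherical argument of Lemma~\ref{lem-3} with spherical averages, replacing the explicit unfolding identity there by a coarea-type estimate. First I would use the bounded density of $\lambda$ to reduce to the case where $\lambda$ is Lebesgue measure on a bounded subset of $\scrU$, at the cost of enlarging $C$. Writing $\scrC_{r,t}:=\scrC(0,t-\zeta r+\eta,\scrB_{t,\infty}(\scrA,0))$, the isometric action of $\Phi^t$ lets me rewrite the integrand as $\#(R(\vecx)g\wbar\cap\Phi^{-t}\scrC_{r,t})$. By the volume computation carried out in the proof of Lemma~\ref{sat-lem-sph} (via \eqref{volli} and the scaling of $\omega(\scrB_{t,\infty}(\scrA,0))$), one has $\vol_{\HH^n}\scrC_{r,t}\asymp\e^{-(n-1)\zeta r}$ uniformly in $t$.

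Then I would expand the count as a sum over $\gamma\in\Gamma/\Gamma_w$ and exchange with the integral by Tonelli. For each $\gamma$, since $R(\vecx)\in K$ fixes $\j$, the image $R(\vecx)g\gamma w$ lies on the hyperbolic sphere $\SS^{n-1}_{d_\gamma}$ of radius $d_\gamma:=d(g\gamma w)$ around $\j$. The nonsingularity assumption on $\vecx\mapsto R^{-1}(\vecx)0$, combined with the smoothness of the $K$-orbit map, yields a Jacobian bound
\begin{equation*}
\vol\bigl\{\vecx\in\scrU : R(\vecx)g\gamma w\in\Phi^{-t}\scrC_{r,t}\bigr\}\le C_1\sinh^{-(n-1)}(d_\gamma)\cdot\operatorname{area}_{\HH^n}\bigl(\Phi^{-t}\scrC_{r,t}\cap\SS^{n-1}_{d_\gamma}\bigr),
\end{equation*}
with $C_1$ independent of $\gamma$ and $t$.

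Summing over $\gamma$, the lattice counting asymptotic $\#\{\gamma:d_\gamma\in[s,s+1]\}\asymp\e^{(n-1)s}$ and $\sinh^{n-1}(s)\asymp\e^{(n-1)s}$ exactly balance, so that the sum reduces, via the coarea identity $\int_0^\infty\operatorname{area}_{\HH^n}(\Phi^{-t}\scrC_{r,t}\cap\SS^{n-1}_s)\,ds=\vol_{\HH^n}(\Phi^{-t}\scrC_{r,t})$, to a constant multiple of $\vol_{\HH^n}(\scrC_{r,t})\lesssim\e^{-(n-1)\zeta r}$, which is the claimed bound.

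The main obstacle I anticipate is establishing the Jacobian lower bound uniformly in $\gamma$: the hypothesis on $R$ ensures nonsingularity of $\vecx\mapsto R^{-1}(\vecx)0$ only at the distinguished direction $0\in\partial\HH^n$, whereas the orbit map at hand depends on $\varphi(g\gamma w)\in\SS^{n-1}$, which can be any direction. I would deal with this by covering $\scrU$ by finitely many charts on which the orbit map has uniformly nondegenerate differential, using the transitive action of $K$ on $\SS^{n-1}$ to absorb the conjugating elements into $g$ and thereby transfer the nonsingularity condition to arbitrary target directions.
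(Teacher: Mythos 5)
The obstacle you flag at the end is the real one, and your proposed fix does not close it. Your per-$\gamma$ Jacobian bound needs uniform control of the maps $\vecx\mapsto R(\vecx)q_\gamma$ with $q_\gamma=\varphi(g\gamma w)$ ranging over all of $\SS^{n-1}$, whereas the hypothesis controls only $\vecx\mapsto R^{-1}(\vecx)0$. Writing $K_0<K$ for the stabilizer of the south-pole direction, the controlled map is $\vecx\mapsto K_0 R(\vecx)\in K_0\backslash K$, while the maps you need are $\vecx\mapsto R(\vecx)k_{q_\gamma}K_0\in K/K_0$. Right translation by $k_{q_\gamma}$ does not descend to the controlled coset space, and it cannot be absorbed into $g$: $g$ is a single fixed group element while $k_{q_\gamma}$ varies with $\gamma$. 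For $n\ge 3$ (where $K_0$ is nontrivial) there are smooth $R$ with $\vecx\mapsto R^{-1}(\vecx)0$ nonsingular yet $\vecx\mapsto R(\vecx)q$ singular at some $q$, so the uniform Jacobian lower bound you invoke genuinely fails under the stated hypotheses.

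The paper sidesteps this by a different reduction. One first enlarges $\scrB_{t,\infty}(\scrA,0)$ to a rotationally symmetric spherical cap $\scrD_t$ with $\omega(\scrD_t)=\sigma_0\e^{-(n-1)t}$. Then the count $k\mapsto\#\bigl(\Phi^t k g\wbar\cap\scrC(0,t-\zeta r+\eta,\scrD_t)\bigr)$ is left $K_0$-invariant (as $K_0$ fixes $\j$, commutes with $\Phi^t$, and preserves $\scrD_t$), hence descends to a function on $K_0\backslash K\cong\SS^{n-1}$. Consequently the $\lambda$-average over $\scrU$ factors through exactly the map $\vecx\mapsto R^{-1}(\vecx)0$ of the hypothesis, and bounded density gives a single constant $C_2$ bounding it by the full Haar average $\int_K\cdots\,dm(k)$. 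That $K$-average is then evaluated directly --- the spherical analogue of the unfolding in Lemma~\ref{lem-3} --- and bounded via the ball-count asymptotics \eqref{ball-count}, yielding $C\e^{-(n-1)\zeta r}$. Your volume estimate $\vol_{\HH^n}\scrC_{r,t}\lesssim\e^{-(n-1)\zeta r}$ and the coarea heuristic are correct in outline, but neither the term-by-term Jacobian bound nor the coarea sum is needed once one enlarges to a rotation-invariant cap; I would restructure along these lines.
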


\begin{proof}
To obtain an upper bound, we may replace $\scrB_{t,\infty}(\scrA,0)$ by a sufficiently large ball $\scrD_t\subset\SS^{n-1}$ so that $\scrB_{t,\infty}(\scrA,0)\subset\scrD_t$ and $\omega (\scrD_t )= \sigma_0 \e^{-(n-1)t}$ for all $t\geq 0$ and some constant $\sigma_0$. Since $\lambda$ has bounded density and $\scrD_t$ is rotation invariant, there is a constant $C_2$ such that
\begin{multline}
\int_{\scrU} \#( \Phi^t R(\vecx) g\wbar \cap \scrC(0,t-\zeta r + \eta,\scrB_{t,\infty}(\scrA,0))) \,d\lambda(\vecx) \\
\leq 
C_2 \int_K \#( \Phi^t k g\wbar \cap \scrC(0,t-\zeta r + \eta,\scrD_t)) \,dm(k) ,
\end{multline}
where $m$ is the Haar probability measure on $K$.
We have
\begin{multline}
\int_K \#( \Phi^t k g\wbar \cap \scrC(0,t-\zeta r + \eta,\scrD_t)) \,dm(k) \\
= \sigma_0 \e^{-(n-1)t} \; \#\{ \gamma\in \Gamma/\Gamma_w , \; 0< d(\gamma g w)\leq \e^{t-\zeta r + \eta} \} .
\end{multline}
Finally, in view of the asymptotics \eqref{ball-count}, there is a constant $C_3$ such that, for all $t\geq 0$,
\begin{equation}
  \#\{ \gamma\in \Gamma/\Gamma_w , \; d(\gamma g w)\leq \e^{t-\zeta r + \eta} \} \leq C_3 \max( 1 , \e^{(n-1)(t-\zeta r)} ) .
\end{equation}
This completes the proof of the lemma.
\end{proof}

\newcommand{\originalvariable}{\xi}
\newcommand{\finalvariable}{\alpha}

\begin{appendix}
 \section{Two-point correlation functions}\label{sec:correlations}

This appendix derives the explicit formula for the 2-point function in dimension $n=2$ and $s=\infty$, which was first calculated in the work of Boca et al.\ \cite{boca_pair_2013}  and Kelmer and Kontorovich \cite{kelmer_pair_2013}. As explained in Remark \ref{rem:2pt}, the convergence of the 2-point function follows from the convergence of the second mixed moment by a standard argument  (cf.~\cite[Appendix~1]{EMV_directions_2013}), and the limit 2-point function $R_2(\originalvariable )$ (defined as in  \cite[Eq.~(1.7)]{kelmer_pair_2013}) is related to the second mixed moment by
\begin{align}\label{eq:fullinterval}
R_2(\originalvariable ) &= \lim_{\epsilon\to 0} \frac1{4\epsilon} \,\big[ \MM_\infty\big(1,1;(-\originalvariable ,\originalvariable )\times (-\epsilon,\epsilon)\big) - \MM_\infty\big(1;(-\epsilon,\epsilon)\big) \big]
\end{align}
The term $\MM_\infty\big(1;(-\epsilon,\epsilon)\big)$ removes the diagonal contribution $\gamma_1=\gamma_2\in\Gamma/\Gamma_w$ in the sum defining the second moment. We divide by $4\eps$ rather than $2\eps$ in \eqref{eq:fullinterval} to count ordered pairs $(\gamma_1,\gamma_2)$ and $(\gamma_2,\gamma_1)$ in  \eqref{eq:mixmo} only once; this is consistent with the definition of $\scrN_Q(\xi )$ in \cite{kelmer_pair_2013}. 
(Note however that unlike \cite{boca_pair_2013,kelmer_pair_2013} we take $\gamma\in\Gamma/\Gamma_w$ in order to count each point in the orbit $\wbar=\Gamma w$ only once.)
In view of \eqref{eq:mixmo2},
\begin{equation}
\MM_\infty\big(1,1;(-\originalvariable,\originalvariable )\times (-\epsilon,\epsilon)\big) - \MM_\infty\big(1;(-\eps,\epsilon)\big)= 
\frac{1}{\#\Gamma_w} \sum_{\substack{\gamma\in\Gamma/\Gamma_w\\ \gamma\neq\Gamma_w}} F_{\gamma,\epsilon}(\vartheta^{-1}\originalvariable ) 
\end{equation}
with
\begin{equation}
F_{\gamma,\epsilon}(\finalvariable ) := \int_{G} \ind{g\gamma w\in\scrZ(\infty,(-\vartheta\finalvariable,\vartheta\finalvariable ))} \ind{g w\in\scrZ(\infty,(-\epsilon,\epsilon))} \,d\mu(g) .
\end{equation}
Note that for $n=2$ we have $\vartheta^{-1}= \#\Gamma_w\vol_{\H^2}(\Gamma\quot \H^2)$.
There is $h\in G$ such that $h w=\i$ and $h\gamma w=\e^{\ell}\i$ where $\ell:=d(w,\gamma w)$. Hence
\begin{equation}
F_{\gamma,\epsilon}(\finalvariable ) = \int_{G} \ind{g\e^{\ell}\i \in\scrZ(\infty,(-\vartheta\finalvariable,\vartheta\finalvariable ))} \ind{g\i\in\scrZ(\infty,(-\epsilon,\epsilon))} \,d\mu(g) .
\end{equation}
In dimension $n=2$, the Iwasawa decomposition \eqref{Iwasawa} for $G=\PSL(2,\R)$ reads
\begin{equation}
g= n(x) a(y) k(\theta), \qquad k(\theta)=\begin{pmatrix}\cos \theta & -\sin\theta\\ \sin\theta & \cos \theta\end{pmatrix}, 
\end{equation}
where $0\leq \theta<\pi$, and Haar measure \eqref{iwa}
\begin{equation}
d\mu(g) = \kappa\, \frac{dx\,dy\,d\theta}{\pi y^2} ,
\end{equation}
where $\kappa=\vol_{\H^2}(\Gamma\quot \H^2)^{-1}$.
With this,
\begin{equation}
F_{\gamma}(\finalvariable ) := \lim_{\epsilon\to 0} \frac{1}{4\epsilon} F_{\gamma,\epsilon}(\finalvariable ) = \frac{\kappa}{2\pi\vartheta}\int_0^\pi \int_1^{\infty}  \ind{a(y)k(\theta) \e^{\ell}\i \in\scrZ(\infty,(-\vartheta\finalvariable,\vartheta\finalvariable ))} \,\frac{dy}{y^2}\, d\theta ,\label{eq:paircorrgamma}
\end{equation}
where $\kappa/\vartheta=\#\Gamma_w$.
The indicator function restricts the domain of integration to
\begin{equation}
\frac{y}{\ch \ell-\sh \ell \cos2\theta}>1, \qquad -\finalvariable < \frac{y\sin2\theta\sh \ell}{\ch \ell-\sh \ell \cos2\theta}<\finalvariable.
\label{eq:alpha_def}
\end{equation}
We exploit the symmetry of the domain of integration by noticing that $\theta<\pi/2$ if and only if $\frac{y\sin2\theta\sh \ell}{\ch \ell-\sh \ell \cos2\theta}>0$, which allows us to rewrite the integral as the sum of two equal  integrals, that over $(0,\pi/2)$ and that over $(\pi/2,\pi)$. Therefore,
\begin{align}
F_{\gamma}(\finalvariable )=\frac{\kappa}{\pi\vartheta}\int_0^\pi \int_1^{\infty}  \ind{a(y)k(\theta) \e^{\ell}\i \in\scrZ(\infty,(0,\vartheta\finalvariable ))} \,\frac{dy}{y^2}\, d\theta ,\label{eq:paircorrgamma_positive}
\end{align}
so that the range of integration becomes
\beq\label{eq:conditions}
 \{y>1\} \cap\left\{ y> \ch \ell - \sh \ell\cos2\theta>0\right\}\cap \left\{y\sin2\theta\sh \ell <(\ch \ell-\sh \ell\cos2\theta)\finalvariable  \right\}\cap \{\theta<\pi/2\}. 
\eeq
We remark that every side of every inequality above is positive. We seek to compute the derivative with respect to $\alpha $ of
\beq\int_{\eqref{eq:conditions}}
\frac{dy}{y^2} d\theta
\label{eq:integral}\eeq
for given $\finalvariable $ and $\ell$. 

The first two inequalities bound $y$ from below, so we split the integral over $\theta$ into two parts, according to which condition dominates: 
\begin{multline}\label{eq:split_y}
\eqref{eq:integral} = \! \int_{\theta: 1> \ch \ell-\sh \ell\cos 2\theta} \limits \! \left[ \int_{y=1}^{\frac{\finalvariable (\ch \ell-\sh \ell\cos 2\theta)}{\sin 2\theta \sh \ell}} \frac{dy}{y^2} \right]_+ d\theta 
\\ + \! \int_{\theta: 1 < \ch \ell-\sh \ell\cos 2\theta} \limits \!\left[\int_{y= \ch \ell-\sh \ell\cos 2\theta}^{\frac{\finalvariable (\ch \ell-\sh \ell\cos 2\theta)}{\sin 2\theta \sh \ell}} \frac{dy}{y^2}\right]_+ d\theta.
\end{multline}
Here $[\cdot]_+ = \max(0,\cdot)$, and serves the purpose of excluding regions where limits are reversed (upper limit is smaller than the lower limit). 

The range of integration  in $\theta$ is equivalent to $\cos 2\theta>\th \frac \ell2$ for the first term of \eqref{eq:split_y}, and the inequality is reversed in the second term. Since $\theta\in [0,\pi / 2)$, there is a unique $\theta_0$ so that the range of integration is from $0$ to $\theta_0$ for the first integral and $\theta_0$ to $\pi /2$ for the second. 

Consider the first term in \eqref{eq:split_y}. It evaluates to 
\begin{align}\label{eq:first_integral}\int_{\theta=0}^{\theta_0} \left[ 1 - \frac{\sin 2\theta \sh \ell}{\finalvariable (\ch \ell-\sh \ell\cos 2\theta)} \right]_+ d\theta.
\end{align}
Observe that the antiderivative can be explicitly written as
\beq\label{eq:antider1}
 \int \left(1 - \frac{\sin 2\theta \sh \ell}{\finalvariable  (\ch \ell-\sh \ell\cos 2\theta)}\right) d\theta = \theta-\frac1{2\finalvariable }\log(\ch \ell-\sh \ell\cos2\theta)+C.
\eeq
It remains to establish the correct range of integration. When $\finalvariable >\sh \ell$, the integrand is always nonnegative, so the $+$-sign is superfluous, and evaluating \eqref{eq:antider1} at $\theta_0$ and $0$ we get for \eqref{eq:first_integral}
\beq
 \theta_0-\frac \ell{2\finalvariable }.
\eeq
Consider the case $\finalvariable < \sh \ell$ and introduce the auxiliary variable $\phi\in [0,\pi/2]$ with $\sin \phi=\frac{\finalvariable }{\sqrt{1+\finalvariable ^2}}$. The positivity condition amounts to $\sin(2\theta +\phi)\le \sin\phi \cth \ell$. When $\theta\in[0,\pi/2)$, this condition is satisfied outside the interval $(\theta_-,\theta_+)$ with 
\begin{align}
 \theta_+ & =\frac\pi2 -\frac\phi2 -\frac12 \arcsin(\sin\phi \cth \ell)\\
 \theta_- & =\frac12 \arcsin(\sin\phi\cth \ell)-\frac\phi2.
\end{align}
Further analysis shows that so long as $\finalvariable  >2\sh\frac \ell2,$ we have the inclusion $(\theta_-,\theta_+)\subset [0,\theta_0)$, so that the range of integration consists of two intervals, $[0,\theta_-)\cup (\theta_+, \theta_0)$. 
If $\finalvariable  <2\sh \frac \ell2$, then $\theta_+>\theta_0$, and we need to integrate over but one interval, $[0,\theta_-)$. Exact formulas follow immediately by substituting into the antiderivative formula \eqref{eq:antider1}. 

Now consider the second term of \eqref{eq:split_y}, which evaluates to 
\beq\label{eq:second_integral}
 \int_{\theta=\theta_0}^{\pi/2} \left[\frac1{\ch \ell-\sh \ell\cos2\theta} \left(1-\frac{\sin 2\theta \sh \ell}{\finalvariable }\right)\right]_+ d\theta.
\eeq
Positivity here is determined by the sign of the expression in parentheses. The antiderivative reads 
\beq\label{eq:antider2}
\arctg(e^\ell \tg\theta)-\frac1{2\finalvariable } \log(\ch \ell-\sh \ell\cos 2\theta)+C .
\eeq
If $\finalvariable  > \sh \ell$, the positivity condition holds for all $\theta$, and the integral \eqref{eq:second_integral} equals 
\[\frac\pi 2 - \frac \ell{2\finalvariable  }-\arctg(e^{\ell/2}).\]
Otherwise within the interval $[0,\pi/2)$, the integrand is nonzero on the complement of $[\tilde \theta_-,\tilde \theta_+)$ with 
\begin{align}
 \tilde \theta_- & = \frac12 \arcsin \frac{\finalvariable  }{\sh \ell}\\
 \tilde \theta_+ & = \frac\pi 2 -\frac12 \arcsin  \frac{\finalvariable  }{\sh \ell}.
\end{align}
When $2 \sh \frac \ell2<\finalvariable  $, $[\tilde \theta_-, \tilde \theta_+]\subset (\theta_0, \pi/2)$, and the range of integration consists of two intervals. When $\finalvariable < 2\sh \frac \ell2,$ the range of integration of \eqref{eq:antider2} consists of the single interval $(\tilde\theta_+,\pi/2).$ Again, the integral \eqref{eq:second_integral} is evaluated by substituting limits in  \eqref{eq:antider2}.

For $F_\gamma'(\finalvariable):= \frac{d}{d \finalvariable  } F_\gamma( \finalvariable)$, this leads to
\begin{equation}\label{finaldensity}
F_\gamma'(\finalvariable) 
=\frac{\#\Gamma_w}{\pi \finalvariable^2} \begin{cases}
 \ell & \finalvariable >\sh \ell\\
\ell + \log(1+\finalvariable ^2) - 2\log  (\ch \ell+\sqrt{\sh^2 \ell -\finalvariable ^2})& 2\sh\frac \ell2 < \finalvariable  \le \sh \ell\\
\ell-\log (\ch \ell+\sqrt{\sh^2 \ell -\finalvariable ^2}) & \finalvariable  \le 2\sh \frac \ell2.
\end{cases}
\end{equation}
Therefore we have for the 2-point correlation density
\begin{align}
g_2(\xi):=\frac{d R_2}{d \originalvariable}( \originalvariable ) = \frac1{\vartheta \#\Gamma_w} \sum_{\substack{\gamma\in\Gamma/\Gamma_w\\ \gamma\neq\Gamma_w}} F_\gamma'(\vartheta^{-1} \originalvariable)
= \vol_{\HH^2}(\Gamma\bs\HH^2) \sum_{\substack{\gamma\in\Gamma/\Gamma_w\\ \gamma\neq\Gamma_w}}   F_\gamma'(\vartheta^{-1} \originalvariable) ,
\end{align}
matching the formulas in \cite{boca_pair_2013, kelmer_pair_2013}, up to the extra factor of $\#\Gamma_w$ in the definition of $\vartheta$, which is due to counting $\gamma$ in $\Gamma/\Gamma_w$ rather than in $\Gamma$ as in \cite{boca_pair_2013, kelmer_pair_2013}.

\end{appendix}

\bibliographystyle{plain}
\bibliography{bibliography}

\end{document}